\newtheorem{theorem}{Theorem}[section]
\newtheorem{lemma}[theorem]{Lemma}
\newtheorem{proposition}[theorem]{Proposition}
\newtheorem{corollary}[theorem]{Corollary}
\theoremstyle{definition}
\newtheorem{definition}{Definition}[section]
\theoremstyle{remark}
\newtheorem{remark}{Remark}[section]
\newtheorem{example}{Example}[section]
\newcommand{\dom}{{\rm Dom}}
\newcommand{\Tr}{{\rm Tr}}
\newcommand{\tr}{{\rm tr}}
\newcommand{\scalar}{R}
\newcommand{\nctorus}[1][2]{\mathbb{T}_\theta^{#1}}
\newcommand{\A}[1][2]{C(\mathbb{T}_\theta^{#1})} 
\newcommand{\Ai}[1][2]{C^\infty(\mathbb{T}_\theta^{#1})}
\newcommand{\lap}{\triangle}
\newcommand{\extp}{\@ifnextchar^\@extp{\@extp^{\,}}}
\def\@extp^#1{\mathop{\bigwedge\nolimits^{\!#1}}}
\renewcommand{\vec}[1]{\boldsymbol{#1}}
\renewcommand{\dim}{d}
\title{Spectral geometry of functional metrics on noncommutative tori}
\author[$\dag$]{Asghar Ghorbanpour}
\author[$\dag$]{Masoud Khalkhali }
\affil[$\dag$]{Department of Mathematics, University of Western Ontario}
\newcommand{\Addresses}{{
  \bigskip
  \footnotesize

  Asghar Ghorbanpour,\textsc{Department of Mathematics, University of Western Ontario,
London, Ontario, Canada, N6A 5B7 }\par\nopagebreak
  \textit{E-mail address}:\texttt{aghorba@uwo.ca}

  \medskip

 Masoud Khalkhali, \textsc{Department of Mathematics, University of Western Ontario,
London, Ontario, Canada, N6A 5B7 }\par\nopagebreak
  \textit{E-mail address}: \texttt{masoud@uwo.ca}

}}
\date{ }
\begin{document}
\maketitle
\begin{abstract}
We introduce a new family of metrics, called functional metrics, on noncommutative tori and study their spectral geometry.
We define a class of  Laplace type operators for these metrics and study their  spectral invariants obtained from the heat trace asymptotics. 
A formula  for the second density of  the heat trace is obtained.
In particular, the scalar curvature density and the total scalar curvature of  functional metrics are explicitly computed    in all dimensions for   certain classes of metrics including  conformally flat metrics and twisted product of flat metrics.
Finally a Gauss-Bonnet type theorem for a noncommutative two torus equipped  with a general functional metric is proved. 
\end{abstract}

\tableofcontents{}
\allowdisplaybreaks

\section{Introduction}
Investigating the differential geometry of curved noncommutative tori started in 
\cite{Connes-Tretkoff2011, Fathizadeh-Khalkhali2012, Connes-Moscovici2014, Fathizadeh-Khalkhali2013} and was followed up in many publications. An incomplete  list includes   \cite{Fathizadeh2015,Fathizadeh-Khalkhali2015,Tanvir-Marcolli2012, Dbrowski-Sitarz2015, Lesch-Moscovici2016, Floricel-Ghorbanpour-Khalkhali2016,Dong-Ghorbanpour-Khalkhali2018}.
Unlike the Riemannian geometry of manifolds, which is defined by a metric tensor, the geometries of the noncommutative tori, or  noncommutative spaces, are defined  through operators which imitate the properties of geometric  operators such as Dirac or Laplace operators  \cite{Connes1994}.
On noncommutative tori such operators can be constructed by analogy using basic facts from the complex geometry or the theory of de Rham complexes on  smooth manifolds. 
Inspired by spectral geometry, some of the spectral invariants  of these operators can be used to define geometric quantities such as scalar curvature, total scalar curvature and even Ricci curvature \cite{Floricel-Ghorbanpour-Khalkhali2016,Dong-Ghorbanpour-Khalkhali2018}.
Similar ideas were worked out for noncommutative toric manifolds in \cite{Liu2017-I,Liu2018}.
Tools such as Connes' pseudo-differential calculus for C$^\ast$-dynamical systems \cite{Connes1980} and the rearrangement lemma \cite{Connes1980,Lesch2017, Connes-Tretkoff2011, Connes-Moscovici2014} play an essential role.   
 
 The geometries studied in the above  works  are mostly  restricted to conformally flat metrics.
The rationale behind the popularity of this specific class comes from the fact that by the uniformization theorem  the conformally flat metrics cover all possible geometries on two tori.
This is not true for the higher dimensional tori and for the two dimensional noncommutative torus the situation is much less understood as far  as the uniformization theorem goes.
Although the study of  conformally flat  geometries alone have shown many  interesting new phenomenon,  
we believe that this is only the tip of the iceberg and much more remains to be discovered for non-conformal metrics and in higher dimensions.

In this paper we introduce a new class of metrics, called {\it functional metrics}.
Let $\Ai[\dim]$ denotes the algebra of smooth elements of a $\dim$-dimensional nonocommutative torus. 
Fix a selfadjoint element $h\in \Ai[\dim]$ and let the functions $g_{ij}:\mathbb{R}\to\mathbb{R}$, $1\leq i,j\leq \dim$, be such that $(g_{ij}(t))$ is a positive definite symmetric matrix for all $t$ in the spectrum of $h$. 
If $\theta=0$, one can consider the metric tensor on $\nctorus[\dim]=(\mathbb{R}/(2\pi\mathbb{Z}))^\dim$ given by 
\begin{equation*}
g_{ij}(h)dx^idx^j.
\end{equation*}
Although, the metric tensor is not a well defined notion in the noncommutative case,  we can exploit the perspective explained in the earlier lines: find the geometric invariants using the spectral analysis of a good differential operator on the noncommutative tori.    
We construct an operator $\lap_{0,g}$ which is antiunitary equivalent to the Laplacian on functions of $\nctorus[\dim]$. 
To define the latter Laplacian, following the same lines as in \cite{Connes-Tretkoff2011}, the entries $g_{ij}(h)$ are used to deform the inner product on functions and 1-forms.

Having a candidate for a geometric operator in hand, we go ahead and compute the densities of the heat trace asymptotic expansion $\Tr(e^{-t\lap_{0,g}})$  using Connes' pseudodifferential calculus on the noncommutative tori.
It turns out that to handle this general case, we need to upgrade some of our tools.
At this stage, we found the point of view taken in \cite{Lesch2017}  very useful.
The primary goal of this work was to offer a different proof for the rearrangement lemma and it provided a rigorous way  to interpret the outcome elements of the original rearrangement lemma. 
We call this type of elements as written in the {\it contraction form}:
\begin{equation*}
F(h_{(0)},\cdots,h_{(n)})(b_1\cdot b_2\cdots b_n).
\end{equation*} 
By a slight variation in one of the main results  of \cite{Lesch2017}, we become well-equipped to handle the difficulties. 
We also noticed that the symbol calculus can be applied for  differential operators whose symbols can be written in the contraction form.
We call  these operators {\it $h$-differential operators}. 
We first find the way in which derivations act on elements written in the contraction form (Theorem \ref{derivationoffunctionofh}). 
The Newton divided difference calculus plays an important role.
For instance, we have
\begin{eqnarray*}
&\delta_j\big(f(h_{(0)},h_{(1)})(b_1)\big)=&\\
&f(h_{(0)},h_{(1)})(\delta_j(b_1))
+\big[h_{(0)},h_{(1)};f(\cdot, h_{(2)} )(\delta_j(h)\cdot b)\big]
+\big[h_{(1)},h_{(2)};f(h_{(0)},\cdot)(b\cdot \delta_j(h))\big].&
\end{eqnarray*}
Using these facts and applying the pseudodifferential calculus,  we compute the spectral densities for positive $h$-differential operators whose principal symbol is given by a functional metric.
We call such an operator a {\it Laplace type $h$-differential operator}.

This change in order of the computations, i.e. writing symbols in the contraction form from the beginning,  not only induced a smoother computation, at least symbolically, but also assisted us in a very fundamental way to consider general cases, e.g. finding the curvature in all higher dimensions for conformally flat and twisted product of flat metrics. 
Using the contraction form also enabled us to use Einstein summation convention. 

While computing the heat trace densities of a positive Laplace type $h$-differential operator $P$ with  principal symbol $P_2^{ij}(h)\xi_i\xi_j$, we come across integrals of the form 
\begin{equation*}
\frac{-1}{\pi^{\dim/2}}\int_{\mathbb{R}^{\dim}}\xi_{n_1}\cdots \xi_{n_{2|\vec{\alpha}|-4}}\frac{1}{2\pi i}\int_\gamma e^{-\lambda} B_0^{\alpha_0}(t_0)\cdots B_0^{\alpha_n}(t_n)  d\lambda d\xi,
\end{equation*}
where $B_0(t)=(P_2^{ij}(t)\xi_i\xi_j-\lambda)^{-1}$.
Such an integral is a smooth function of $t_j$'s and we call it a {\it $T$-function} and denote it by   $T_{\vec{n};\vec{\alpha}}(t_0,\cdots,t_n)$.
In Lemma \ref{integralformofTna}, we find a formula for $T$-functions as an integral over the standard simplex which helps us to evaluate these functions in some cases (see Lemmas \ref{Tfunctionsconfomallyflatlemma}, \ref{Talphaklemma} and  \ref{Tfunctionsdimtwo}).
As an example, $T$-functions for conformally flat principal symbols $P_2^{ij}(t)=f(t)g^{ij}$ for $\dim\neq 2$, are given by
\begin{equation*}
T_{\vec{n,\alpha}}(t_0,\cdots,t_n)=\sqrt{|g|}\underset{\vec{n}}{\sum \prod}g_{n_in_{\sigma(i)}} 
\frac{(-1)^{|\vec{\alpha}|-1}\Gamma(\frac{\dim}2-1)}{\Gamma(\frac{\dim}2+|\vec{\alpha}|-2)} \left.\partial_x^{\vec{\beta}} \big[x_0,\cdots,x_n;u^{1-\frac{d}2}\big]\right|_{x_j=f(t_j)},
\end{equation*}
where $\vec{\beta}=(\alpha_0-1,\cdots,\alpha_{n}-1)$.

In Theorem \ref{scalarcrvatureconformal}, we show that the scalar curvature of the $\dim$-dimensional noncommutative tori $\mathbb{T}_\theta^\dim$ equipped with a conformally flat metric $f(h)^{-1}g_{ij}$, where $g_{ij}$ are constant numbers, is of the form
\begin{equation*}
R=\sqrt{|g|} \Big(K_{\dim}(h_{(0)},h_{(1)})(g^{ij}\delta_i\delta_j(h))+H_{\dim}(h_{(0)},h_{(1)},h_{(2)})(g^{ij}\delta_i(h)\cdot\delta_j(h))\Big),
\end{equation*}
A formula for the functions $K_\dim$ and $H_\dim$ is given for all $\dim$ in Theorem \ref{scalarcrvatureconformal}.
In particular, for $\dim\neq 2$, the function $K_\dim$ is given by
\begin{align*}
K_{\dim}(t_0,t_1)&=\frac{4\, f(t_0)^{2-\frac{3 \dim }{4}} f(t_1)^{2-\frac{3 \dim }{4}} }{\dim(\dim -2) (f(t_0)-f(t_1))^3}\big[t_0,t_1;f\big]\times\\
&\left((\dim -1) f(t_0)^{\frac\dim 2} f(t_1)^{\frac{\dim }{2}-1}-(\dim -1) f(t_0)^{\frac{\dim }{2}-1} f(t_1)^{\frac \dim 2}-f(t_0)^{\dim -1}+f(t_1)^{\dim -1}\right).
\end{align*}

In Section \ref{twistedmetricsec}, we study the geometry of  the noncommutative tori $\nctorus[\dim]$ equipped with metrics of the form 
\begin{equation*}
f(t)^{-1}g\oplus \tilde{g}.
\end{equation*}
Following the differential geometry convention, we call such a metric a {\it twisted product functional metric} of $(\nctorus[r],g)$ and $(\nctorus[\dim-r],\tilde{g})$ with the twisting element $f(h)^{-1}$. 
The scalar curvature density of this metric is then obtained as
\begin{equation*}
\begin{aligned}
R=\sqrt{|g||\tilde{g}|} \Big(& K_{r}(h_{(0)},h_{(1)})(g^{ij}\delta_i\delta_j(h))+H_{r}(h_{(0)},h_{(1)},h_{(2)})(g^{ij}\delta_i(h)\cdot\delta_j(h))\\
&+\tilde{K}_{r}(h_{(0)},h_{(1)})(\tilde{g}^{ij}\delta_i\delta_j(h))+\tilde{H}_{r}(h_{(0)},h_{(1)},h_{(2)})(\tilde{g}^{ij}\delta_i(h)\cdot\delta_j(h))\Big).
\end{aligned}
\end{equation*} 
The new functions $\tilde{K}_{r}$ and $\tilde{H}_{r}$ for $r\neq 2, 4$ are  given in Theorem \ref{scalartwisted}.
For instance, we have 
\begin{align*}
\tilde{K}_{r}(t_0,t_1)\hspace*{-1mm}=\hspace*{-1mm}\frac{(2r-4)(f(t_0)^2-f(t_1)^2) f(t_0)^{\frac{r}2} f(t_1)^{\frac{r}2}+4 f(t_0)^2 f(t_1)^r-4 f(t_0)^r f(t_1)^2 }{(r-4) (r-2)f(t_0)^{\frac34 r} f(t_1)^{\frac34 r} (f(t_0)-f(t_1))^3}\hspace*{-0.5mm}\big[t_0,t_1;f\big].
\end{align*}

Finally the {\it total scalar curvature}, $\varphi(b_2(\lap_{0,g}))$, of functional metrics on noncommutative tori is studied  in Section \ref{Totalcurvaturesection}.
Note that the presence of the trace $\varphi$ provides a ground to simplify the formula for the total scalar curvature. 
One then has Proposition \ref{totalcurvature} which asserts that 
\begin{equation*}
\varphi(R)=\varphi\left(F_S^{ij}(h_{(0)},h_{(1)})\big(\delta_i(h)\big)\delta_j(h)\right).
\end{equation*}
The functions $F_S^{ij}$ are given by
\begin{equation*}
\begin{aligned}
F_S^{ij}(t_0,t_1)= \frac1{2(t_0-t_1)^2}\Big(
&A^{ij}\sqrt[4]{|g|(t_0)|g|(t_1)}-2A^{ij} T_{;1,1}(t_0,t_1)\\
&+T_{k,l;1,2}(t_0,t_1) \big(2  A^{ik} g^{lj}(t_1)+2A^{kj}  g^{il}(t_1)-A^{ik}A^{lj}\big)\\
&+T_{k,l;2,1}(t_0,t_1)\big (2A^{ik}g^{lj}(t_0)+2  A^{kj} g^{il}(t_0) -A^{ik}A^{lj}\big)\Big),
\end{aligned}
\end{equation*} 
where
$A^{ij}=
{|g|^{\frac14}(t_0 )}{|g|^{\frac{-1}4} (t_1 )}g^{i j} (t_0 )
+{|g|^{\frac14} (t_1 )}{|g|^{\frac{-1}4}(t_0)} g^{ij} (t_1 ).$
This formula is then used in Example \ref{totalcuroftwistedexample} to generate the functions for the total scalar curvature  for twisted product of flat metrics on $\nctorus[\dim]$.
As an another type of example, we compute  functions $F_{S}^{ij}$ for doubly twisted product metrics on $\nctorus[4]$ in Example \ref{dtflatT4}.
On the other hand, using the Proposition \ref{totalcurvature} and evaluating the functions $F_S^{ij}$, we prove a Gauss-Bonnet theorem in dimension two, which generalizes the results obtained for conformally flat noncommutative two torus in \cite{Connes-Tretkoff2011,Fathizadeh-Khalkhali2012}.
More precisely, Theorem \ref{GBindim2} states that the total scalar curvature $\varphi(R)$ for any functional  metrics $g$ on $\nctorus$ vanishes.

This paper is organized as follows.  
In Section 2, we review preliminaries such as noncommutative tori, Connes' pseudodifferential calculus and Newton divided difference calculus.
Also in Subsection 2.2 a version of the rearrangement lemma is provided and  used to find the derivation of an element written in the contraction form.
Section 3 deals with the heat trace asymptotics for Laplace type $h$-differential operators.
The $T$-functions are introduced in this section and their properties are studied.
The class of functional metrics on noncommutative tori is introduced in Section 4 and the scalar curvature of some of the examples of such geometries is explicitly computed in all dimensions.
In Section 5, we manage to track down further simplifications for the trace of the total scalar curvature for the functional metrics and prove a Gauss-Bonnet theorem for these metrics. 

\noindent{\bf Acknowledgement:}
The work of M. Khalkhali was partially supported by an NSERC Discovery Grant.
A. Ghorbanpour was partially supported by a Fields Institute postdoctoral fellowship, held at University of Western Ontario.

\section{Preliminaries and computational tools}
In \cite{Connes-Tretkoff2011,Fathizadeh-Khalkhali2012} to prove a version of the Gauss-Bonnet theorem for the noncommutative two torus and later in \cite{Connes-Moscovici2014,Fathizadeh-Khalkhali2013} to compute the scalar curvature of a curved noncommutative two torus equipped with a conformally flat metric, the second density of the heat trace of the Laplacian $D^2$ of the Dirac operator was studied. 
This term  was explicitly computed for the heat trace and the localized heat trace and the final results were  written in a novel form which we will call the contraction form. 
These computations involved two main steps. 
First, the  symbol of the  parametrix of the differential operator  were computed and several integrations were performed. 
The rearrangement lemma (cf. \cite{Connes-Moscovici2014}) plays  a central role in this step and it is the part that generates elements in the contraction form. 
In the second step, the outcome from the previous step, which are in terms of an auxiliary element $k=e^h$, were manipulated to be written in terms of the dilaton $h$ and then simplified to its best form.
One of our goals is to show that we do not need to wait until the end of the step one to have elements in the contraction from; 
we can start off with operators whose symbol is originally written in the contraction form.
In this section, in addition to recalling basics of the theory, we shall develop the required tools to be able to perform the symbol calculus on symbols in the contraction form.

\subsection{Noncommutative \texorpdfstring{$\dim$-tori}{dtori} and Connes' pseudodifferential calculus}
In this section, we review  the definitions and fix notations for the noncommutative $\dim$-dimensional torus. We also recall a  pseudodifferential calculus on these $C^\ast$-algebras which is a special case of the calculus introduced by Connes \cite{Connes1980} for $C^\ast$-dynamical systems.

Let $\theta$ be an anti-symmetric $\dim\times \dim$ real matrix. 
The noncommutative $\dim$-torus $\A [\dim]$ is the universal $C^\ast$-algebra generated by $\dim$ unitary elements $U_1,\cdots,U_\dim$ which satisfy the commutation relations
\begin{equation*}
U_kU_j=e^{2\pi i \theta_{jk}}U_jU_k.
\end{equation*} 
When $\theta$ is the zero matrix, the $C^\ast$-algebra $\A [\dim]$ is isomorphic to the $C^\ast$-algebra $C(\mathbb{T}^\dim)$ of continuous functions on the $\dim$-dimensional torus $\mathbb{T}^\dim$. 

The algebra of smooth elements  of $\A$, denoted by $\Ai[\dim]$, is a dense involutive subalgebra consisting of elements  of the form 
$$\sum_{\vec{n}=(n_1,\cdots, n_\dim)\in \mathbb{Z}^\dim}a_{\vec{n}}U_1^{n_1}\cdots U_\dim^{n_\dim},$$ 
where $\{a_{\vec{n}}\}_{\vec{n}\in\mathbb{Z}^\dim}$ is a rapidly decaying sequence of complex numbers.
There are $\dim$ closed densely defined derivations on $\A [\dim]$ with the common domain $\Ai[\dim]$ whose values on $U_k$ are given by
\begin{equation*}
\delta_j(U_k)=\delta_{jk}U_k.
\end{equation*} 
We note that $\delta_j(a)^*=-\delta_j(a^*)$, for any $a\in \Ai[\dim]$.
The noncommutative $\dim$-tori possess a  tracial state $\varphi$ which acts on $\Ai[\dim]$ as
\begin{equation*}
\varphi(\sum a_{\vec{n}}U_1^{n_1}\cdots U_\dim^{n_\dim})=a_{\vec{0}}.
\end{equation*}
The trace $\varphi$ is compatible with the derivations $\delta_j$, i.e.  $\varphi\circ \delta_j=0$.
This implies that  a version of the integration by parts is valid on $\nctorus[\dim]$,  
$$\varphi(a\delta_j(b))=-\varphi(\delta_j(a)b),\qquad  a,b\in \Ai[\dim],\, j=1,\cdots,\dim.$$

The subalgebra $\Ai[\dim]$ for $\theta=0$, is indeed the algebra of smooth functions on the $\dim$-dimensional flat torus $\mathbb{T}^d=\mathbb{R}^\dim/(2\pi \mathbb{Z})^\dim$.
In this case, the derivations $\delta_j$ are nothing but  $\frac{1}{i}\frac{\partial}{\partial x_j}$ while the trace $\varphi$ is the (normalized) integration with respect to the  volume form $dx^1\cdots dx^\dim$.

Connes' pseudodifferential calculus on the noncommutative $\dim$-tori can be constructed, as a special case of the general theory developed in \cite{Connes1980} for $C^\ast$-dynamical systems, using the  action of $\mathbb{R}^\dim$ on $\A[\dim]$ by *-automorphisms given by 
\begin{equation*}
\alpha_{\vec{s}}(U_1^{n_1}\cdots U_\dim^{n_\dim})=e^{i\vec{s}\cdot \vec{n}}U_1^{n_1}\cdots U_\dim^{n_\dim},\quad \vec{s}=(s_1,\cdots,s_\dim)\in \mathbb{R}^\dim.
\end{equation*}
We fix the following notations  for a non-negative integer multi-index $\alpha= (\alpha_1,\cdots, \alpha_\dim)$: 
\begin{equation*}
\partial^{\vec{\alpha}}=\frac{\partial^{\alpha_1}}{\partial \xi_1^{\alpha_1}} \cdots \frac{\partial^{\alpha_\dim}}{\partial \xi_\dim^{\alpha_\dim}},
\qquad \delta^{\vec{\alpha}}= \delta_1^{\alpha_1}\cdots \delta_\dim^{\alpha_\dim},
\qquad |\vec{\alpha}|=\sum \alpha_j,\quad \text{and} \quad 
\vec{\alpha}!=\alpha_1!\cdots\alpha_\dim !.
\end{equation*}
A smooth map $\rho: \mathbb{R}^\dim \to \Ai[\dim]$ is called
 a {\it symbol of order $m$}, if 
 \begin{itemize}
 \item[(i)] For every  multi-indices of non-negative integers $\vec{\alpha}$ and $\vec{\beta}$, there exists a constant $C$ such that
\begin{equation*}
 \|\delta^{\vec{\alpha}}  \partial^{\vec{\beta}} \big( \rho(\xi)\big) \|
\leq C (1+|\xi|)^{m-|\vec{\beta}|}.
\end{equation*} 
\item [(ii)]
There exists a smooth map $K: \mathbb{R}^\dim \to \Ai[\dim]$ such that
$$\lim_{\lambda \to \infty} \lambda^{-m} \rho(\lambda\xi_1,\cdots, \lambda\xi_\dim) = K (\xi_1,\cdots, \xi_\dim).$$
\end{itemize}
The space of all symbols of order $m$ is denoted by $S^m(\nctorus[\dim])$.

To every symbol $\rho\in S^m(\nctorus[\dim])$, a pseudodifferential operator $P_\rho$ is assigned which on the elements of $\Ai[\dim]$ is defined as     
\begin{equation*}
P_\rho(a)=\frac1{(2\pi)^{\dim}}\int_{\mathbb{R}^\dim}e^{is\xi}\rho (\xi)\alpha_{\xi}(a)d\xi.
\end{equation*}
The composition $P_{\rho_1}\circ P_{\rho_2}$ of two pseudodifferential operators $P_{\rho_1}$ and $P_{\rho_2}$ with symbols $\rho_j\in S^{m_j}(\nctorus[\dim])$,  is  again a pseudodifferential operator with the product symbol $\rho\in S^{m_1+m_2}(\nctorus[\dim])$  asymptotically given by \cite[Proposition 8]{Connes1980}:
\begin{equation}\label{produtsymbol}
\rho\sim \sum_{\vec{\alpha}\geq 0} 
\partial^{\vec{\alpha}} (\rho_1) \delta^{\vec{\alpha}}(\rho_2)/\vec{\alpha}!.
\end{equation}

For more details on Connes' pseudodifferential calculus we refer the reader to \cite{Connes1980}, for generalized cases to \cite{Lesch-Moscovici2016,Connes-Moscovici2014} and for  recent account on the subject to \cite{Ha-Lee-Ponge2018-I,Ha-Lee-Ponge2018-II}.

\subsection{Rearrangement Lemma}
The rearrangement lemma  appeared in \cite{Connes-Tretkoff2011} as a tool to evaluate integrals of the form
\begin{equation*}
\int_0^\infty (uk^2+1)^{-m}b(uk^2+1)u^{m} du,
\end{equation*}
where $k=e^{h/2}$ for some smooth selfadjoint element  $h \in C^\infty(\nctorus[2])$ and  an element $b$ which do not necessarily commute with $h$.
A version of the lemma, dealing with more than one $b$, then appeared in \cite{Connes-Moscovici2014, Fathizadeh-Khalkhali2012}.
Later, a detailed study of this type of integrals has been carried out in \cite{Lesch2017}, where a new proof for more general version of the lemma is offered and a different point of view  of  such computations is provided.
In \cite{Lesch2017} for $\prod f_j(u,x)$ satisfying an integrability condition (see  \cite[Theorem 3.4]{Lesch2017}) a formula for the integral
\begin{equation*}
\int_0^\infty f_0(u,k)b_1 f_1(u,k)\rho_2\cdots b_n f_n(u,k) du,
\end{equation*}
is given.
This approach uses the multiplication map $\mu :a_1\otimes a_2\otimes \cdots\otimes a_n\mapsto a_1a_2\cdots a_n$  from the projective tensor product $A^{\otimes_\gamma (n)}$ to $A$. 
The above integrals  are expressed as  the contraction of the product of an element $F(k_{(0)},\cdots,k_{(n)})$ of $A^{\otimes_\gamma (n+1)}$, which is given by a multivariate functional calculus, and  the element $b_1\otimes b_2\otimes \cdots b_n\otimes 1$ which is
\begin{equation*}
\mu\Big(F(k_{(0)},\cdots,k_{(n)})(b_1\otimes b_2\otimes \cdots b_n\otimes 1)\Big).
\end{equation*} 
We shall use the convention that such an element will be written as 
\begin{equation}\label{contractionform}
F(k_{(0)},\cdots,k_{(n)})(b_1\cdot b_2\cdot \cdots b_n).
\end{equation}
An element in the form of  \eqref{contractionform} will be said is written in the {\it contraction form}. 
We call the first part $F(h_{(0)},\cdots,h_{(n+1)})$ the {\it commutative part} or {\it operator part} of the element, while the second part, $b_1\cdot b_2 \dots b_n$, will be called the {\it noncommutative part} of the element. 

The following lemma is a version of the rearrangement lemma that we will use in this work and it can be proved exactly  as in  \cite[Theorem 3.4]{Lesch2017} with a slight change in the domain of integration from $[0,\infty)$ to any domain in $\mathbb{R}^N$.

\begin{lemma}[Rearrangement lemma]\label{rearrangmentlemma}
Let $A$ be a unital $C^\ast$-algebra, $h\in A$ be a selfadjoint element, and $\Lambda$ be an open neighborhood of the spectrum of $h$ in $\mathbb{R}$.
For  a domain $U$ in $\mathbb{R}^N$, let 
$f_j:U \times \Lambda\to \mathbb{C}$ for $0\leq j\leq n$  be  smooth functions  such that $f(u,\lambda)=\prod_{j=0}^n f_j(u,\lambda_j)$ satisfies the following integrability condition:
for any compact subset $K\subset \Lambda^{n+1}$ and every given  integer multi-index $\alpha$ we have    
$$\int_U \sup\limits_{\lambda\in K}|\partial_\lambda^\alpha f(u,\lambda)|du<\infty.$$
Then,
\begin{equation}\label{formulaofrearrangement}
\int_U f_0(u,h)b_1f_1(u,h)\cdots b_n f_n(u,h)du=F(h_{(0)},h_{(1)},\cdots,h_{(n)})(b_1\cdot b_2\cdots b_n),
\end{equation}
where $F(\lambda)=\int_U f(u,\lambda)du$.\qed
\end{lemma}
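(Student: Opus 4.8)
The plan is to reduce everything to a Fourier-analytic representation of the functional calculus and then to justify interchanging the $u$-integral with the (multivariate) functional calculus. I would begin by recording the \emph{pointwise} (in $u$) identity, which is essentially the defining property of the multivariate functional calculus on separable functions: for each fixed $u \in U$, writing $g_j := f_j(u, \cdot)$, the function $\lambda \mapsto \prod_{j=0}^n g_j(\lambda_j)$ is separable, so under the functional calculus it becomes the elementary tensor $g_0(h) \otimes \cdots \otimes g_n(h) \in A^{\otimes_\gamma(n+1)}$, and contracting with $b_1 \otimes \cdots \otimes b_n \otimes 1$ via $\mu$ yields exactly $f_0(u,h)\, b_1\, f_1(u,h) \cdots b_n\, f_n(u,h)$. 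Thus the claimed formula is equivalent to the interchange
\begin{equation*}
\int_U f(u, h_{(0)}, \ldots, h_{(n)}) \, du = \Big( \int_U f(u, \cdot) \, du \Big)(h_{(0)}, \ldots, h_{(n)}),
\end{equation*}
and the whole problem is to make sense of both sides as convergent $A^{\otimes_\gamma(n+1)}$-valued integrals and to show they agree.

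To realize the functional calculus in a form where this interchange is transparent, I would use the Fourier transform. Since $h$ is self-adjoint with \emph{compact} spectrum, after a harmless compactly-supported cutoff in $\lambda$ near the spectrum (which changes neither $f_j(u,h)$ nor the integrability hypothesis), $\{e^{ish}\}_{s \in \mathbb{R}}$ is a strongly continuous one-parameter group of unitaries and $\phi(h) = \int_{\mathbb{R}} \hat{\phi}(s)\, e^{ish}\, ds$ for the cutoff functions $\phi$. Applying this in each variable, I would write, for fixed $u$,
\begin{equation*}
f_0(u,h)\, b_1 \cdots b_n\, f_n(u,h) = \int_{\mathbb{R}^{n+1}} \Big( \prod_{j=0}^n \hat{f}_j(u, s_j) \Big)\, e^{is_0 h}\, b_1\, e^{is_1 h} \cdots b_n\, e^{is_n h}\, ds,
\end{equation*}
where $\hat{f}_j(u, \cdot)$ is the Fourier transform of $f_j(u, \cdot)$. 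The multivariate functional calculus $F(h_{(0)},\ldots,h_{(n)})$ is given by the analogous formula with $\prod_j \hat{f}_j$ replaced by the full Fourier transform $\hat{F}$ of $F$, contracted against the same unitary word. Hence it suffices to prove $\hat{F}(s) = \int_U \prod_{j=0}^n \hat{f}_j(u, s_j)\, du$, i.e. that Fourier transform in $\lambda$ and integration in $u$ commute.

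The heart of the argument is this interchange, and it is precisely what the integrability hypothesis is designed for. The bounds on $\partial_\lambda^{\vec\alpha} f$ translate, under Fourier transform, into polynomial weights $s^{\vec\alpha}$ on $\hat{f}(u,s) = \prod_j \hat{f}_j(u,s_j)$; combined with the assumed finiteness of $\int_U \sup_{\lambda \in K} |\partial_\lambda^{\vec\alpha} f(u,\lambda)|\, du$ over compact $K$, this furnishes a dominating function integrable in $(u,s)$, so Fubini applies and both $F(\lambda) = \int_U f(u,\lambda)\, du$ and its Fourier transform are well-defined ($F$ being smooth by differentiation under the integral sign). Pulling $\int_U$ inside the $s$-integral, and using that the unitary word is a fixed bounded $A^{\otimes_\gamma(n+1)}$-valued function of $s$ independent of $u$, converts the left-hand integral into the functional calculus of $F$, which is the right-hand side.

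I expect the main obstacle to be the careful bookkeeping at the Fourier-transform stage: matching the smoothness-and-decay class in which the map $F \mapsto F(h_{(0)}, \ldots, h_{(n)})$ is continuous against the estimates furnished by the integrability condition, and ensuring every manipulation takes place inside the projective tensor product $A^{\otimes_\gamma(n+1)}$ so that the contraction map $\mu$ remains bounded. Since the original argument in \cite[Theorem 3.4]{Lesch2017} uses only the integrability condition and never the specific geometry of the half-line, replacing $[0,\infty)$ by an arbitrary domain $U \subseteq \mathbb{R}^N$ requires no new idea: the same Fubini and dominated-convergence estimates go through verbatim once the hypothesis is stated over $U$.
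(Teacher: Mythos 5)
Your proposal is correct and follows essentially the same route as the paper's proof, which simply invokes \cite[Theorem 3.4]{Lesch2017}: there, too, the multivariate functional calculus is realized by Fourier transform against the unitary word $e^{is_0h}b_1e^{is_1h}\cdots b_ne^{is_nh}$, and the integrability hypothesis supplies the domination needed to apply Fubini/Bochner integrability and interchange the $u$-integral with the functional calculus. Your closing observation --- that nothing in the argument uses the geometry of $[0,\infty)$, so an arbitrary domain $U\subseteq\mathbb{R}^N$ works verbatim --- is precisely the paper's own justification for stating the lemma without a new proof.
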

\noindent For more information on the functional calculus $F(h_{(0)},h_{(1)},\cdots,h_{(n)})$ on $A^{\otimes_{\gamma} (n+1)}$ and its relation to  the Bochner integral $\int_U f(u,h_{(0)},h_{(1)},\cdots,h_{(n)})du$,  we refer the reader to \cite[Section 3.2]{Lesch2017}.  
\begin{remark}\label{integralform}
Equation \eqref{formulaofrearrangement} shows that integration is a source to generate elements in the contraction form.
Using lemma \ref{rearrangmentlemma}, as noted in \cite[Remark 3.3]{Lesch2017},  every expression in the contraction form with a Schwartz function $F\in\mathcal{S}(\mathbb{R}^{n+1})$ used in the operator part can indeed be obtained using an integral such as the one in the left hand side of equation \eqref{formulaofrearrangement}.
To see this, it is enough to set 
$$f_n(\xi,\lambda)=\hat{f}(\xi)e^{i\xi_n \lambda},\quad f_j(\xi,\lambda)=e^{i\xi_j \lambda},\quad 0\leq j\leq n-1,$$ 
and $f(\xi,\lambda_0,\cdots,\lambda_n)=\prod_{j=0}^n f_j(\xi,\lambda_j)$. 
By the Fourier inversion formula, we have 
$F(\lambda)=\int f(\xi,\lambda)d\xi$.
Then,  Lemma \ref{rearrangmentlemma}  gives the equality 
\begin{equation}\label{integralformformula}
F(h_{(0)},\cdots,h_{(n)})(b_1\cdot b_2\cdots  b_n)=\int_{\mathbb{R}^{n}}e^{i\xi_0 h}b_1e^{i\xi_1 h}b_2\cdots b_n e^{i\xi_n h}\hat{f}(\xi)d\xi.
\end{equation} 
This fact will be used several times in this work.
\end{remark}


\subsection{Newton divided differences and closed derivations}\label{closedderandfinite}
As an application of the rearrangement lemma, we find a formula for the derivative of a  smooth element written in contraction form in terms of the (partial) divided differences of the function used in the functional part and the derivatives of the noncommutative part.
We first briefly review the Newton divided differences. 

Let $x_0, x_1,\cdots, x_n$ be distinct points in an interval $I\subset \mathbb{R}$ and let $f$ be a function on $I$. 
The {\it $n^{th}$-order Newton divided difference} of $f$, denoted by $[x_0,x_1,\cdots,x_n;f]$,
is the coefficient of $x^n$ in the interpolating polynomial of $f$ at the given points.
In other words, if the interpolating polynomial is $p(x)$ then 
\begin{equation*}
p(x)=p_{n-1}(x)+[x_0,x_1,\cdots,x_n;f](x-x_0)\cdots(x-x_{n-1}),
\end{equation*}
where $p_{n-1}(x)$ is a polynomial of    degree at most $n-1$.
The following is the recursive formula for the divided difference which justifies the name, divided difference.
\begin{equation*}
\begin{aligned}[]
  [ x_0; f]&=f(x_0)\\
[x_0,x_1,\cdots,x_n;f]&=\frac{[x_1,\cdots,x_n;f]-[x_0,x_1,\cdots,x_{n-1};f]}{x_n-x_0}.
\end{aligned}
\end{equation*}
The divided difference is also given by the explicit formula
\begin{equation*}
[x_0,x_1,\cdots,x_n;f]=\sum_{j=0}^n \frac{f(x_j)}{\prod_{j\neq l} (x_j-x_l)}.
\end{equation*}
It is clear that the $n^{th}$-order divided difference, as a function of $x_j$'s, is a symmetric function.
There are different integral formulas for the divided differences from which we will use the {\it Hermite-Genocchi formula} (cf. e.g. \cite[Theorem 3.3]{Atkinson1989}). 
For an $n$ times continuously differentiable function $f$, Hermite-Genocchi formula gives
\begin{equation}\label{HermiteGenocchi}
[x_0,\cdots,x_n;f]=\int_{\Sigma_n} f^{(n)}\Big(\sum_{j=0}^n s_j x_j\Big)ds,
\end{equation}
where $\Sigma_n$ denotes the standard $n$-simplex  
$$\Sigma_n=\Big\{(s_{0},\dots ,s_{n})\in\mathbb{R}^{n+1}{\Big |}~\sum _{j=0}^{n}s_{j}=1{\mbox{ and }}s_{j}\geq 0{\mbox{ for all }}j\Big\}.$$
We refer the reader to \cite[Chapter 1]{Milne1951} or \cite[Section 3.2]{Atkinson1989} for more details on the Newton divided differences.

Let $\delta$ be a (densely defined, unbounded) closed derivation on a $C^\ast$-algebra $A$.
If $a\in \dom(\delta)$, then $e^{za}\in \dom(\delta)$ for any $z\in \mathbb{C}$ \cite[p. 68]{Sakai1991}  and
\begin{equation}\label{generalduahmmel}
\delta(e^{za})=z\int_0^1 e^{zsa}\delta(a) e^{z(1-s)a}ds.
\end{equation}
Using the rearrangement lemma, one can go one step further and show that the derivation of $f(h)$ can be presented in the contraction form.  
In \cite{Connes-Tretkoff2011} this result is used for $f(x)=\exp(x)$ as a corollary of the expansional formula for $e^{A+B}$ and
in \cite{Lesch2017}, the expansional formula is proved for general functions \cite[Proposition 3.7]{Lesch2017}.
The following theorem gives the result not only for $f(h)$ but for any element  written in the contraction form.

\begin{theorem}\label{derivationoffunctionofh}
Let  $\delta$ be a closed derivation of a $C^\ast$-algebra $A$ and $h\in \dom(\delta)$ be a selfadjoint element. 
\begin{itemize}
\item[$i)$] Let $f:\mathbb{R}\to \mathbb{C}$ be a smooth function. Then, $f(h)\in \dom(\delta)$ and 
\begin{equation*}
\delta(f(h))=[h_{(0)},h_{(1)};f](\delta(h)).
\end{equation*}
\item[$ii)$] Let $b_j\in \dom(\delta)$,   $1\leq j \leq n$, and let $f:\mathbb{R}^{n+1}\to \mathbb{C}$ be a smooth function.
Then $f(h_{(0)},\cdots, h_{(n)})(b_1\cdot b_2\cdot \dots\cdot b_n)$ is in the domain of $\delta$  and  
\begin{equation*}
\begin{aligned}
&\delta(f(h_{(0)},\cdots,h_{(n)})(b_1\cdot b_2 \cdots  b_n))\\
&=\sum_{j=1}^{n} f(h_{(0)},\cdots,h_{(n)})\big(b_1\cdots b_{j-1}\cdot\delta(b_j)\cdot b_{j+1} \cdots  b_n\big)\\
&\quad +\sum_{j=0}^n f_j(h_{(0)},\cdots,h_{(n+1)})\big(b_1\cdots b_{j}\cdot\delta(h)\cdot b_{j+1} \cdots  b_n\big),
\end{aligned}
\end{equation*}
where $f_j(t_0,\cdots,t_{n+1})$, which we call the partial divided difference, is defined as 
$$f_j(t_0,\cdots,t_{n+1})=\left[t_j,t_{j+1};t\mapsto f(t_0,\cdots, t_{j-1},t,t_{j+2},\cdots,t_n)\right].$$
\end{itemize}
\end{theorem}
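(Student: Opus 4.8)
The plan is to prove both parts by reducing an arbitrary smooth $f$ to a Fourier superposition of exponentials, for which the Duhamel formula \eqref{generalduahmmel} applies verbatim, and then to transport the resulting identities back through the rearrangement lemma (Lemma \ref{rearrangmentlemma}) and the closedness of $\delta$. Before starting I would dispose of a domain technicality: since $h$ is a selfadjoint element of a unital $C^\ast$-algebra its spectrum is a compact subset of $\mathbb{R}$, and both $f(h)$ and the divided differences $[h_{(0)},h_{(1)};f]$ depend only on the restriction of $f$ to a neighborhood of $\mathrm{spec}(h)$. Hence, after multiplying $f$ by a compactly supported smooth cutoff equal to $1$ near $\mathrm{spec}(h)$ (resp. near $\mathrm{spec}(h)^{n+1}$ in part $ii)$), I may assume $f\in\mathcal{S}(\mathbb{R})$ (resp. $f\in\mathcal{S}(\mathbb{R}^{n+1})$) without changing either side, which both legitimizes Fourier inversion and places us in the Schwartz setting of Remark \ref{integralform}.

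For part $i)$ I would begin with a single exponential $f(x)=e^{i\xi x}$. The Duhamel formula \eqref{generalduahmmel} gives $\delta(e^{i\xi h})=i\xi\int_0^1 e^{i\xi s h}\delta(h)e^{i\xi(1-s)h}\,ds$, and Lemma \ref{rearrangmentlemma} with integrand $e^{i\xi s\lambda_0}\cdot e^{i\xi(1-s)\lambda_1}$ rewrites this as $G(h_{(0)},h_{(1)})(\delta(h))$, where $G(\lambda_0,\lambda_1)=i\xi\int_0^1 e^{i\xi(s\lambda_0+(1-s)\lambda_1)}\,ds=\frac{e^{i\xi\lambda_0}-e^{i\xi\lambda_1}}{\lambda_0-\lambda_1}=[\lambda_0,\lambda_1;e^{i\xi\,\cdot}]$. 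Writing the general (Schwartz) $f$ by Fourier inversion as $f(h)=\int\hat f(\xi)e^{i\xi h}\,d\xi$, I would use that $e^{i\xi sh}$ is unitary, so $\|\delta(e^{i\xi h})\|\leq|\xi|\,\|\delta(h)\|$ and the relevant Bochner integrals converge absolutely, to pass $\delta$ through the integral by closedness; linearity of the divided difference in $f$ then yields $\delta(f(h))=\int\hat f(\xi)[h_{(0)},h_{(1)};e^{i\xi\,\cdot}](\delta(h))\,d\xi=[h_{(0)},h_{(1)};f](\delta(h))$.

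For part $ii)$ I would first treat the pure-exponential operator part $f(t_0,\ldots,t_n)=\prod_{k=0}^n e^{i\xi_k t_k}$, for which the contraction form collapses to the genuine product $e^{i\xi_0 h}b_1 e^{i\xi_1 h}\cdots b_n e^{i\xi_n h}$ by \eqref{integralformformula}. Applying the Leibniz rule for $\delta$ to this finite product distributes $\delta$ across the $n+1$ exponential factors and the $n$ elements $b_j$. The $n$ terms differentiating some $b_j$ reassemble, via the rearrangement lemma, into the first sum of the statement. Each of the $n+1$ terms differentiating a factor $e^{i\xi_j h}$ produces, by the computation of part $i)$, the insertion of $i\xi_j\int_0^1 e^{i\xi_j s h}\delta(h)e^{i\xi_j(1-s)h}\,ds$ between $b_j$ and $b_{j+1}$; recognizing that this middle factor is the divided difference acting only in the $j$-th slot, it is precisely the partial divided difference term $f_j(h_{(0)},\ldots,h_{(n+1)})(b_1\cdots b_j\cdot\delta(h)\cdot b_{j+1}\cdots b_n)$ evaluated for the pure exponential $f$. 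Superposing over $\xi$ against $\hat f(\xi)$ and once more moving $\delta$ inside the Bochner integral by closedness upgrades the identity from exponentials to arbitrary smooth $f$, since both $f\mapsto f(h_{(0)},\ldots,h_{(n)})(\,\cdot\,)$ and $f\mapsto f_j$ are linear and continuous in the Fourier-weighted norms at hand.

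The conceptual content lives entirely in the exponential case; the main obstacle, and the place demanding care, is the analytic bookkeeping of the superposition step. Concretely, I expect the delicate points to be verifying the integrability hypothesis of Lemma \ref{rearrangmentlemma} uniformly in the superposition variable $\xi$, and justifying the interchange of the closed, unbounded derivation $\delta$ with the Bochner integrals $\int\hat f(\xi)(\cdots)\,d\xi$. The enabling estimates are that every $e^{i\xi_j sh}$ is unitary while the Duhamel splitting contributes only a polynomial factor $|\xi_j|$, so the rapid decay of $\hat f$ forces absolute convergence of both $\int\|\cdots\|\,d\xi$ and $\int\|\delta(\cdots)\|\,d\xi$; a closed operator then commutes with such an integral and its output lands in $\dom(\delta)$. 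A secondary check is that the partial divided difference $f_j$ is a genuinely smooth function of its $n+2$ arguments (guaranteed by the Hermite--Genocchi representation \eqref{HermiteGenocchi}) and that its value on pure exponentials agrees with the general definition, which holds because the divided difference in the $j$-th slot sees only the single $t$-dependent factor $e^{i\xi_j t}$.
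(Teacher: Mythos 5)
Your proposal is correct and follows essentially the same route as the paper's proof: reduction to Schwartz class, Fourier inversion, the Duhamel formula \eqref{generalduahmmel}, closedness of $\delta$ to differentiate under the Bochner integral, and the rearrangement lemma to recognize the resulting operator parts as (partial) divided differences via Hermite--Genocchi. The only differences are presentational: you isolate the pure-exponential case before superposing and you make explicit the cutoff and integrability estimates that the paper leaves implicit, while the paper writes out $n=1$ in part $ii)$ and defers the general case.
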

\begin{proof}
It is enough to assume that $f$ is a Schwartz class function on $\mathbb{R}$. We have
\begin{equation*}
f(h)=\int_\mathbb{R} e^{ih\xi}\hat{f}(\xi)d\xi,
\end{equation*}
where $\hat{f}$ is the Fourier transform of the function $f$.
By closedness of $\delta$, we can then change the order of integration and derivation: 
\begin{eqnarray*}
\delta(f(h))
&=&\int_\mathbb{R} \delta(e^{ih\xi})\hat{f}(\xi)d\xi\\
&=&\int_\mathbb{R} i\xi\int_0^1 e^{i\xi s h}\delta(h) e^{i\xi(1-s)h}ds\hat{f}(\xi)d\xi\\
&=&\mathcal{C}\left(\int_\mathbb{R} \int_0^1 e^{i\xi s h}\otimes e^{i\xi(1-s)h}i\xi\hat{f}(\xi)dsd\xi(\delta(h)\otimes 1)\right)\\
&=&F(h_{(0)},h_{(1)})(\delta(h)).
\end{eqnarray*}
Here $F(t_0,t_1)$ is the function given by the integral 
$$\int_\mathbb{R} \int_0^1 e^{i\xi s t_0}e^{i\xi(1-s)t_1}i\xi\hat{f}(\xi)dsd\xi.$$ 
Using Hermite-Genocchi formula \eqref{HermiteGenocchi} and the properties of the Fourier transform, we have  
\begin{equation*}
F(t_0,t_1)=\int_0^1 \int_\mathbb{R} e^{i\xi (s t_0+\xi(1-s)t_1)}i\xi\hat{f}(\xi)d\xi ds=\int_0^1 f'(s t_0+\xi(1-s)t_1)ds=[t_0,t_1;f].
\end{equation*}

We prove the part $ii)$ only for $n=1$; the proof of the general case follows similarly.   
We first use Remark \ref{integralform} to write $f(h_{(0)},h_{(1)})(b_1)$ as an integral 
$$f(h_{(0)},h_{(1)})(b_1)=\int_{\mathbb{R}^2} e^{i\xi_1h} b_1 e^{i\xi_2h}\hat{f}(\xi)d\xi.$$
The Leibniz  rule together with the fact that $\delta$ is closed  imply that 
\begin{align*}
&\delta\Big(f(h_{(0)},h_{(1)})(b_1)\Big)
= \delta\Big(\int_{\mathbb{R}^2} e^{i\xi_1h}b_1 e^{i\xi_2h}\hat{f}(\xi)d\xi\Big)\\
&= \int_{\mathbb{R}^2} \delta\Big(e^{i\xi_1h}\Big)b_1 e^{i\xi_2h}\hat{f}(\xi)d\xi
+ \int_{\mathbb{R}^2} e^{i\xi_1h}\delta\Big(b_1\Big) e^{i\xi_2h}\hat{f}(\xi)d\xi
+ \int_{\mathbb{R}^2} e^{i\xi_1h}b_1 \delta\Big(e^{i\xi_2h}\Big)\hat{f}(\xi)d\xi\\
&= \int_{\mathbb{R}^2}\int_0^1 i\xi_1 e^{is\xi_1h}\delta(h)e^{i(1-s)\xi_1h}b_1 e^{i\xi_2h}\hat{f}(\xi)dsd\xi\\
&\quad + \int_{\mathbb{R}^2} e^{i\xi_1h}\delta(b_1) e^{i\xi_2h}\hat{f}(\xi)d\xi\\
&\quad + \int_{\mathbb{R}^2}\int_0^1 i\xi_2 e^{i\xi_1h}b_1 e^{is\xi_2h}\delta(h)e^{i(1-s)\xi_2h}\hat{f}(\xi)dsd\xi\\
&=f_1(h_{(0)},h_{(1)},h_{(2)})(b_1\cdot\delta(h))+f(h_{(0)},h_{(1)})(\delta(b_1))
+f_2(h_{(0)},h_{(1)},h_{(2)})( \delta(h)\cdot b_1),
\end{align*}
where 
\begin{equation*}
\begin{aligned}
f_1(t_0,t_1,t_2)&=\int_{\mathbb{R}^2}\int_0^1 i\xi_1 e^{is\xi_1t_0}e^{i(1-s)\xi_1t_1} e^{i\xi_2t_2}\hat{f}(\xi)dsd\xi=[t_0,t_1;f(\cdot,t_2)],\\ 
f_2(t_0,t_1,t_2)&=\int_{\mathbb{R}^2}\int_0^1 i\xi_2 e^{i\xi_1t_0}e^{is\xi_2t_1}e^{i(1-s)\xi_2t_0}\hat{f}(\xi)dsd\xi=[t_1,t_2;f(t_0,\cdot)].
\end{aligned}\vspace*{-0.85cm}
\end{equation*}
\end{proof}

\begin{corollary}\label{secondderivationoffunctionofh}
Let $h$ be a selfadjoint element in a $C^\ast$-algebra  $A$ and let $\delta_1$ and $\delta_2$ be two closed derivations.
If $h\in \dom(\delta_2)$ and  $\delta_2(h)\in \dom(\delta_1)$, then for any smooth function $f$ we have
\begin{equation*}
\delta_1\delta_2(f(h))=\left[h_{(0)},h_{(1)},h_{(2)};f\right]\left(\delta_2(h)\cdot \delta_1(h)+\delta_1(h)\cdot \delta_2(h)\right)
+\left[h_{(0)},h_{(1)};f\right](\delta_1\delta_2(h)).
\end{equation*}
\end{corollary}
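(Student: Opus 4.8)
The plan is to obtain the formula by applying Theorem \ref{derivationoffunctionofh} twice, once for each derivation, and then to identify the resulting operator parts as divided differences of $f$. First I would apply part $i)$ with the derivation $\delta_2$: since $h\in\dom(\delta_2)$, this gives $f(h)\in\dom(\delta_2)$ and
\begin{equation*}
\delta_2(f(h))=[h_{(0)},h_{(1)};f](\delta_2(h)).
\end{equation*}
The key observation is that the right-hand side is already written in the contraction form, with operator part the two-variable function $g(t_0,t_1):=[t_0,t_1;f]$ and a single noncommutative slot occupied by $b_1:=\delta_2(h)$. This is exactly the shape to which part $ii)$ applies.

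Next I would apply part $ii)$ with $n=1$, the derivation $\delta_1$, and $b_1=\delta_2(h)$. The hypothesis $\delta_2(h)\in\dom(\delta_1)$, together with $h\in\dom(\delta_1)$ (which is implicitly needed for $\delta_1(h)$ to appear in the statement), is precisely what legitimizes this second application. The $n=1$ formula then produces three terms: one from differentiating the noncommutative slot, namely $g(h_{(0)},h_{(1)})(\delta_1(b_1))=[h_{(0)},h_{(1)};f](\delta_1\delta_2(h))$, which is already the last term of the claimed identity; and two terms built from the partial divided differences of $g$, with $\delta_1(h)$ inserted after and before the slot $b_1=\delta_2(h)$ respectively.

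The heart of the argument is the evaluation of these two partial divided differences. By the definition of the partial divided difference they are $g_1(t_0,t_1,t_2)=[t_0,t_1;\,s\mapsto[s,t_2;f]]$, attached to $\delta_2(h)\cdot\delta_1(h)$, and $g_2(t_0,t_1,t_2)=[t_1,t_2;\,s\mapsto[t_0,s;f]]$, attached to $\delta_1(h)\cdot\delta_2(h)$. I would show that both equal the second-order divided difference $[t_0,t_1,t_2;f]$ via the nesting identity for divided differences: a first-order divided difference of a first-order divided difference with one argument frozen collapses to the second-order divided difference. This can be verified directly from the explicit formula $[x_0,\dots,x_n;f]=\sum_j f(x_j)/\prod_{l\neq j}(x_j-x_l)$ (or from the recursion), and the full symmetry of divided differences in their arguments simultaneously equates $g_1$ and $g_2$. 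Substituting $g_1=g_2=[t_0,t_1,t_2;f]$ and collecting the two terms into $[h_{(0)},h_{(1)},h_{(2)};f](\delta_2(h)\cdot\delta_1(h)+\delta_1(h)\cdot\delta_2(h))$ then yields the stated formula.

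I expect the main obstacle to be bookkeeping rather than anything conceptual: establishing the nesting/symmetry identity cleanly, and tracking that the two inserted factors $\delta_1(h)$ land on the correct sides of $\delta_2(h)$ so that the symmetric combination $\delta_2(h)\cdot\delta_1(h)+\delta_1(h)\cdot\delta_2(h)$ emerges under the single operator part $[h_{(0)},h_{(1)},h_{(2)};f]$. One must also keep the domain conditions straight across the two applications, ensuring $f(h)$, $\delta_2(h)$, and $\delta_1\delta_2(h)$ lie in the appropriate domains at each stage.
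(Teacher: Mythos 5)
Your proposal is correct and follows essentially the same route as the paper's proof: apply part $i)$ of Theorem \ref{derivationoffunctionofh} with $\delta_2$, then part $ii)$ with $n=1$ and $\delta_1$ to the contraction-form element $[h_{(0)},h_{(1)};f](\delta_2(h))$, identify both partial divided differences with $[t_0,t_1,t_2;f]$ via the nesting and symmetry of divided differences, and combine by linearity of the contraction. The one small slip is that you pair $[t_0,t_1;s\mapsto[s,t_2;f]]$ with $\delta_2(h)\cdot\delta_1(h)$ and $[t_1,t_2;s\mapsto[t_0,s;f]]$ with $\delta_1(h)\cdot\delta_2(h)$, which is the transpose of the pairing dictated by Theorem \ref{derivationoffunctionofh} (the divided difference in slot $j$ goes with $\delta_1(h)$ inserted at position $j$); this is harmless here because both functions equal the fully symmetric $[t_0,t_1,t_2;f]$ and the two noncommutative parts are summed.
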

\begin{proof}
Using Theorem \ref{derivationoffunctionofh}, we have 
\begin{equation*}
\begin{aligned}
\delta_1\delta_2(f(h))=&\delta_1\left(\delta_2(f(h))\right)
=\delta_1\left(\left[h_{(0)},h_{(1)};f\right](\delta_2(h))\right)\\
=&\left[h_{(0)},h_{(1)};f\right](\delta_1\delta_2(h))\\
&+\left[h_{(0)},h_{(1)};t\mapsto [t,h_{(2)};f]\right]\left(\delta_1(h)\cdot \delta_2(h)\right)\\
&+\left[h_{(1)},h_{(2)};t\mapsto [h_{(0)},t;f]\right]\left(\delta_2(h)\cdot \delta_1(h)\right).
\end{aligned}
\end{equation*}
The functional part of the last two terms, by the definition of second order divided difference and invariance of divided difference under the permutation of variables $t_j$'s,  are equal to  $\left[h_{(0)},h_{(1)},h_{(2)};f\right]$.
The proof is done using the fact that the contraction is linear with respect to the noncommutative part.
\end{proof}
This corollary can also be deduced from the variational formula used in \cite[Example 3.9]{Lesch2017}.
In the case of the noncommutative two tori, the conditions of Corollary \ref{secondderivationoffunctionofh} are automatically  satisfied for any smooth selfadjoint element $h=h^*\in\Ai$. 
For $f(t)=e^{\frac{t}2}$, we have 
\begin{equation*}
\begin{aligned}
\left[t_0,t_1,t_2;e^{\frac{t}2}\right]&=\frac{e^{t_0/2}}{(t_0-t_1)(t_0-t_2)}+\frac{e^{t_1/2}}{(t_1-t_0)(t_1-t_2)}+\frac{e^{t_2/2}}{(t_2-t_0)(t_2-t_1)}\\
&=e^{t_0/2}\frac{(t_1-t_2)+(t_2-t_0)e^{(t_1-t_0)/2}+(t_0-t_1)e^{(t_2-t_0)/2}}{(t_0-t_1)(t_0-t_2)(t_1-t_2)},\\
\left[t_0,t_1;e^{\frac{t}2}\right]&=\frac{e^{t_0/2}-e^{t_1/2}}{t_0-t_1}
=e^{t_0/2}\frac{1-e^{(t_1-t_0)/2}}{t_0-t_1}.
\end{aligned}
\end{equation*} 
This is in complete agreement with the formula (6.8) in \cite{Connes-Moscovici2014} by setting 
$$s=t_1-t_0, \quad  t=t_2-t_1, \text{ and }  h=2\log(k).$$

\section{Heat trace asymptotics for  Laplace type \texorpdfstring{$h$}{h}-differential operators}
Following the strategy underlined in the previous section, we shall compute the first three terms of the symbol of the parametrix of an elliptic operator on the noncommutative tori with a positive definite principal symbol whose symbol is given in the contraction form. 
These terms will be given in the contraction form as well.    

\subsection{Laplace type \texorpdfstring{$h$}{h}-operators and their paramatrix}\label{rawcomp}
In the following definition we introduce a new class of differential operators on noncommutative tori that generalize the already studied classes.
\begin{definition}
Let $h\in\Ai[\dim]$ be a smooth selfadjoint element.
\begin{itemize}
\item[$i)$] By an {\it $h$-differential operator} on $\nctorus[\dim]$, we mean a differential operator 
$P=\sum_{\vec{\alpha}}p_{\vec{\alpha}}\delta^{\vec{\alpha}},$ 
with $\Ai[\dim]$-valued coefficients $p_{\vec{\alpha}}$ which can be written in the contraction form 
$$p_{\vec{\alpha}}=P_{\vec{\alpha},\vec{\alpha}_1,\cdots,\vec{\alpha}_k}(h_{(0)},\cdots,h_{(k)})(\delta^{\vec{\alpha}_1}(h)\cdots \delta^{\vec{\alpha}_k}(h)).$$
\item[$ii)$] We call a second order $h$-differential operator  $P$ a {\it Laplace type $h$-differential operator} if its symbol is a sum of homogeneous parts $p_j$ of the form
\begin{equation}\label{noncommutativelaplacetypesym}
\begin{aligned}
p_2&=P_2^{ij}(h)\xi_i\xi_j,\\
p_1&=  P_1^{ij}(h_{(0)},h_{(1)})(\delta_i(h)) \xi_j,\\
p_0&= P_{0,1}^{ij}(h_{(0)},h_{(1)})(\delta_i\delta_j(h))+P_{0,2}^{ij}(h_{(0)},h_{(1)},h_{(2)})(\delta_i(h)\cdot \delta_j(h)),
\end{aligned}
\end{equation}
where the principal symbol $p_2:\mathbb{R}^\dim\to \Ai[\dim]$ is a $\Ai[\dim]$-valued quadratic form such that $p_2(\xi)>0$ for all $\xi\in\mathbb{R}^\dim$. 
\end{itemize} 
\noindent We can allow  the symbols to be matrix valued, that is  $p_j:\mathbb{R}^\dim\to \Ai[\dim]\otimes M_n(\mathbb{C})$, provided that all $p_2(\xi)\in \Ai[\dim]\otimes {\rm I}_n$ for all nonzero $\xi\in\mathbb{R}^\dim$.
\end{definition}

\begin{example}
Many of the differential operators on noncommutative tori which  were studied in the literature 
 are Laplace type $h$-differential operators. 
 For instance, the two differential operators on $\nctorus[2]$ whose spectral invariants are studied in \cite{Fathizadeh-Khalkhali2013,Connes-Moscovici2014} are indeed Laplace type $h$-differential operator.
Using the notation of \cite[Lemma 1.11]{Connes-Moscovici2014}, $k=e^{h/2}$, 
these operators are given by
\begin{equation*}
k\lap k=k\delta\delta^* k,\qquad \lap_\varphi^{(0,1)}=\delta^* k^2 \delta,
\end{equation*}
where, $\delta=\delta_1+\bar{\tau}\delta_2$ and  $\delta^*=\delta_1+\tau\delta_2$ for some complex number $\tau$ in the upper half plane.
The operator $k\lap k$ is antiunitary equivalent to the Laplacian on the functions on $\nctorus[2]$ equipped with the conformally flat metric $k^2\begin{pmatrix}
1 & \Re(\tau)\\ \Re(\tau) & |\tau|^2
\end{pmatrix}$.
We shall construct this operator in all dimensions for the more general class of metrics, which will be introduced in Section \ref{functionalmetricsec}, and study it in details then. 
On the other hand, the symbol of  the other operator $\lap_\varphi^{(0,1)}$ is the sum of the following homogeneous terms  \cite[Lemma 6.1]{Connes-Moscovici2014} :
\begin{align*}
p_2(\xi)=& k^2(\xi_1^2+2\Re(\tau)\xi_1\xi_2+|\tau|^2\xi_2^2),\\
p_1(\xi)=&
(\delta_1(k^2)+\tau \delta_2(k^2))\xi_1+(\bar{\tau}\delta_1(k^2)+|\tau|^2 \delta_2(k^2))\xi_2,\\
p_0(\xi)=&0.
\end{align*}
Clearly, we have  
\begin{align*}
p_2^{11}(t)= e^t,
\qquad p_2^{12}(t)=p_2^{21}(t)= \Re(\tau)e^t,
\qquad p_2^{22}(t)=|\tau|^2 e^t,
\end{align*}
and using Corollary \ref{secondderivationoffunctionofh}, we obtain the other functions 
\begin{align*}
P^{1,1}_1(t_0,t_1)=&\left[t_0,t_1;e^{u}\right],\qquad && P^{2,1}_1(t_0,t_1)=\tau \left[t_0,t_1;e^{u}\right],\\
P^{1,2}_1(t_0,t_1)=&\bar{\tau}\left[t_0,t_1;e^{u}\right],\qquad && P^{2,2}_1(t_0,t_1)=|\tau|^2 \left[t_0,t_1;e^{u}\right].
\end{align*}
\end{example}
\begin{remark} 
The Laplace type $h$-operators can also be made sense on smooth vector bundles on manifolds, where $h$ can be taken (locally) as a function or an endomorphism of the bundle. 
If $h$ is a function, then a Laplace type $h$-differential operator is indeed a Laplace type operator.
However, if $h$ is an endomorphism, these operators belong to a different class of elliptic operators of order two for which some aspects of its  heat trace densities are studied in   \cite{Iochum-Masson2018,Avramidi-Branson2001}.
The  matrix-valued conformally perturbed Dirac operator on smooth manifolds, which is an $h$-differential operator, is also studied recently in \cite{Khalkhali-Sitarz2018}.
\end{remark}

Let  $P$ be a positive Laplace type $h$-differential operator. 
For any $\lambda$ in $\mathbb{C}\backslash \mathbb{R}^{\geq 0}$,  the operator $(P-\lambda)^{-1}$ is a classical pseudodifferential operator of order $-2$, i.e. its symbol can be written  as 
$$\sigma\left((P-\lambda)^{-1}\right)=b_0(\xi,\lambda)+b_1(\xi,\lambda)+b_2(\xi,\lambda)+\cdots,$$ 
where $b_j$ is a homogeneous  symbol of order $-2-j$.
The  following recursive formula for $b_j$'s can be obtained from the product formula for symbols  \eqref{produtsymbol}.
\begin{equation}\label{recursiveformulaforpara}
\begin{aligned}
b_0(\xi,\lambda)&=(p_2(\xi)-\lambda)^{-1},\\
b_n(\xi,\lambda)&=-\sum_{j=0}^{n-1}\sum_{k=0}^2\sum_{|\vec{\alpha}|=n-j+k-2} \partial^{\vec{\alpha}}b_j(\xi,\lambda) \, \delta^{\vec{\alpha}}(p_k(\xi))\, b_0(\xi,\lambda)/\vec{\alpha}!  \qquad n>0.
\end{aligned}
\end{equation}
If we start off with symbols written in the contraction form, the formula \eqref{recursiveformulaforpara} shows   that all $b_n(\xi,\lambda)$'s can also be computed in the contraction form using Theorem \ref{derivationoffunctionofh}.
In the rest of this section, we compute the terms $b_0(\xi,\lambda)$, $b_1(\xi,\lambda)$ and $b_2(\xi,\lambda)$ in the contraction form as follows. 

\noindent{\bf Term $b_0(\xi,\lambda)$}: 
This term is defined as $b_0(\xi,\lambda)=(p_2(\xi)-\lambda)^{-1}$ and it can be written as 
\begin{equation*}
b_0(\xi,\lambda)=B_0(h),\qquad B_0(t_0)=(P_2^{ij}(t_0)\xi_i\xi_j-\lambda)^{-1}.
\end{equation*}
We choose to overlook the dependence of $B_0$ on the variables $\xi$ and $\lambda$ in its notation.

\noindent {\bf Term $b_1(\xi,\lambda)$}:
Using \eqref{recursiveformulaforpara}, we have 
\begin{equation*}
b_1=-b_0(\xi,\lambda)p_1(\xi)b_0(\xi,\lambda)-\sum \partial_i b_0(\xi,\lambda)\, \delta_i(p_2(\xi))b_0(\xi,\lambda).
\end{equation*}
We then write each term of the above expression in the contraction form: 
\begin{align*}
-b_0(\xi,\lambda)p_1(\xi)b_0(\xi,\lambda)&=-\xi_jB_0(h_{(0)})P_1^{ij}(h_{(0)},h_{(1)})B_0(h_{(1)})(\delta_i(h)),\\
\partial_i b_0(\xi,\lambda) &=-2P_2^{ij}(h)B_0^2(h)\xi_j,\\
\delta_i(p_2(\xi))&=\xi_k\xi_\ell \, \big[h_{(0)},h_{(1)};P_2^{k\ell}\big](\delta_i(h)),\\
-\sum \partial_i b_0(\xi,\lambda)\, \delta_i(p_2(\xi))b_0(\xi,\lambda)&=2\xi_j\xi_k\xi_\ell P_2^{ij}(h_{(0)})B_0^2(h_{(0)}) \big[h_{(0)},h_{(1)};P_2^{k\ell}\big] B_0(h_{(1)})(\delta_i(h)).
\end{align*}
Therefore, $b_1(\xi,\lambda)=B_1^{i}(\xi,\lambda,h_{(0)},h_{(1)})(\delta_i(h))$, where
\begin{equation*}
B_1^i(\xi,\lambda,t_0,t_1)= -\xi_jB_0(t_0)P_1^{ij}(t_0,t_1)B_0(t_1)+2\xi_j\xi_k\xi_\ell P_2^{ij}(t_0)B_0^2(t_0) \big[t_0,t_1;P_2^{k\ell}\big] B_0(t_1).
\end{equation*}

\noindent{\bf Term $b_2(\xi,\lambda)$}:
Formula \eqref{recursiveformulaforpara} for $n=2$, gives $b_2(\xi,\lambda)$ in terms of $b_0(\xi,\lambda)$ and $b_1(\xi,\lambda)$ and $p_j(\xi)$'s:  
\begin{equation}\label{formulab2firstversion}
\begin{aligned}
b_2(\xi,\lambda)
=&-b_0(\xi,\lambda)p_0(\xi)b_0(\xi,\lambda)-b_1(\xi,\lambda)p_1(\xi)b_0(\xi,\lambda)\\
&
-\sum \partial_ib_0(\xi,\lambda)\, \delta_i(p_1(\xi))b_0(\xi,\lambda)
- \sum \partial_i b_1(\xi,\lambda)\, \delta_i(p_2(\xi))b_0(\xi,\lambda)\\
&-\sum \frac12 \partial_i\partial_j b_0(\xi,\lambda) \delta_i\delta_j(p_2(\xi))b_0(\xi,\lambda).
\end{aligned}
\end{equation}
To compute $b_2(\xi,\lambda)$ we  need $\partial_j b_1(\xi,\lambda)$. 
First, we should note that  
\begin{equation*}
\partial_i b_1(\xi,\lambda)=\partial_j\left(B_1^{i}(\xi,\lambda,h_{(0)},h_{(1)})(\delta_i(h))\right)=\partial_j\left(B_1^{i}(\xi,\lambda,h_{(0)},h_{(1)})\right)(\delta_i(h)).
\end{equation*}
Hence, we have
\begin{equation}\label{derivativeofb1}
\begin{aligned}
\partial_jB_1^{i}(\xi,\lambda,t_0,t_1)
=& -B_0(t_0)P_1^{ij}(t_0,t_1)B_0(t_1) \\
& +2 \xi_k \xi_\ell P_2^{j\ell}B_0^2(t_0) \, P_1^{ik}(t_0,t_1) \, B_0(t_1) \\
&+2 \xi_k \xi_\ell B_0(t_0)P_1^{ik}(t_0,t_1)\,  P_2^{j\ell}B_0^2(t_1) \\
&+ 2\, \xi_\ell\xi_m P_2^{ij}B_0^2(t_0) \big[t_0,t_1;P_2^{\ell m}\big] B_0(t_1)\\ 
&+ 4\xi_k\xi_\ell P_2^{ik}(t_0)B_0^2(t_0) \big[t_0,t_1;P_2^{ j\ell}\big] B_0(t_1)\\
&- 8\xi_k\xi_\ell\xi_m\xi_n P_2^{ik}(t_0) P_2^{jn}(t_0) B_0^3(t_0) \, \big[t_0,t_1;P_2^{\ell m}\big] B_0(t_1)\\
&- 4\xi_k\xi_\ell\xi_m\xi_n P_2^{ik}(t_0)B_0^2(t_0) \big[t_0,t_1;P_2^{\ell m}\big]\,  P_2^{jn}(t_1) B_0^2(t_1).
\end{aligned}
\end{equation}

We now compute each term of expression \eqref{formulab2firstversion} separately:
\begin{align*}
-b_0(\xi,\lambda)p_0(\xi)b_0(\xi,\lambda)
=& -B_0(h_{(0)})P_{0,1}^{ij}(h_{(0)},h_{(1)})B_0(h_{(1)})(\delta_i\delta_j(h))\\
&-B_0(h_{(0)})P_{0,2}^{ij}(h_{(0)},h_{(1)},h_{(2)})B_0(h_{(2)})(\delta_i(h)\cdot\delta_j(h)).
\end{align*}
The second term of the sum \eqref{formulab2firstversion} is equal to
\begin{align*}
&-b_1(\xi,\lambda)p_1(\xi)b_0(\xi,\lambda)=\\
&\Big(\xi_k\xi_\ell B_0(h_{(0)})P_1^{ik}(h_{(0)},h_{(1)})B_0(h_{(1)})P^{j\ell}_1(h_{(1)},h_{(2)})B_0(h_{(2)})\\
& \,-2\xi_k\xi_\ell\xi_m\xi_n P_2^{im}B_0^2(h_{(0)}) \big[h_{(0)},h_{(1)};P_2^{k\ell}]\, B_0(h_{(1)})P^{jn}_1(h_{(1)},h_{(2)})B_0(h_{(2)})\Big)(\delta_i(h)\cdot \delta_j(h)).
\end{align*}
We need $\delta_i(p_1(\xi))$ for the next term;
\begin{align*}
\delta_i(p_1(\xi))
=&\xi_\ell P_1^{j\ell}(h_{(0)},h_{(1)})\,\big(\delta_i\delta_j(h)\big)\\
&+\xi_\ell \big[h_{(0)},h_{(1)};P_1^{j\ell}(\cdot,h_{(2)})\big]\,\big(\delta_i(h)\cdot\delta_j(h)\big)\\
&+\xi_\ell \big[h_{(1)},h_{(2)};P_1^{j\ell}(h_{(0)},\cdot)\big]\,\big(\delta_j(h)\cdot\delta_i(h)\big).
\end{align*}
Then, the third term of \eqref{formulab2firstversion} is given by
\begin{align*}
&-\sum\partial_i b_0(\xi,\lambda)\delta_i(p_1(\xi))b_0(\xi,\lambda)\\
&=2\xi_k\xi_\ell  P_2^{ik}B_0^2(h_{(0)})P_1^{j\ell}(h_{(0)},h_{(1)})B_0(h_{(1)})\,\big(\delta_i\delta_j(h)\big)\\
&\,\, +2\xi_k\xi_\ell  P_2^{ik}B_0^2(h_{(0)}) \big[h_{(0)},h_{(1)};P_1^{j\ell}(\cdot,h_{(2)})\big]B_0(h_{(2)})\,\big(\delta_i(h)\cdot\delta_j(h)\big)\\
&\,\,+2\xi_k\xi_\ell  P_2^{ik}B_0^2(h_{(0)})\,  \big[h_{(1)},h_{(2)};P_1^{j\ell}(h_{(0),\cdot})\big]\, B_0(h_{(2)})\,\big(\delta_j(h)\cdot\delta_i(h)\big).
\end{align*}
Considering \eqref{derivativeofb1}, it can  readily be seen that $-\sum_j \partial_j(b_1)\delta_j(p_2)b_0$ is of the form 
$$F^{ij}(h_{(0)},h_{(1)},h_{(2)})(\delta_i(h)\cdot \delta_j(h)),$$
 where 
\begin{align*}
F^{ij}(t_0,t_1,t_2)
&= \xi_k \xi_\ell B_0(t_0)P_1^{ij}(t_0,t_1)B_0(t_1)\, \big[t_1,t_2;P_2^{k\ell}\big]\, B_0(t_2) \\
& -2 \xi_k \xi_\ell\xi_m\xi_n P_2^{j\ell}(t_0)\,B_0^2(t_0) \, P_1^{ik}(t_0,t_1) \, B_0(t_1)\big[t_1,t_2;P_2^{mn}\big]\, B_0(t_2)  \\
&-2 \xi_k \xi_\ell\xi_m\xi_n B_0(t_0)P_1^{ik}(t_0,t_1)\,  P_2^{j\ell}(t_1)B_0^2(t_1) \, \big[t_1,t_2;P_2^{mn}\big]\, B_0(t_2)  \\
&- 2\, \xi_k\xi_\ell\xi_m\xi_n P_2^{ij}(t_0)B_0^2(t_0) \,\big[t_0,t_1;P_2^{\ell m}\big]\, B_0(t_1)\, \big[t_1,t_2;P_2^{kn}\big]\, B_0(t_2)  \\ 
&- 4\xi_k\xi_\ell\xi_m\xi_n P_2^{ik}(t_0)B_0^2(t_0) \,\big[t_0,t_1;P_2^{ j\ell}\big]\, B_0(t_1)\, \big[t_1,t_2;P_2^{mn}\big]\, B_0(t_2) \\
&+ 8\xi_k\xi_\ell\xi_m\xi_n\xi_p\xi_q P_2^{ik}(t_0)P_2^{jn}(t_0)B_0^3(t_0) \,\big[t_0,t_1;P_2^{\ell m}\big]\, B_0(t_1)\, \big[t_1,t_2;P_2^{pq}\Big]\, B_0(t_2) \\
&+ 4\xi_k\xi_\ell\xi_m\xi_n\xi_p\xi_q P_2^{ik}(t_0)B_0^2(t_0) \,\big[t_0,t_1;P_2^{\ell m}\big]\,  P_2^{jn}(t_1)\,B_0^2(t_1)\, \big[t_1,t_2;P_2^{pq}\big]B_0(t_2).
\end{align*}
To compute the very last term $-\sum_{i,j} \frac12 \partial_i\partial_j b_0(\xi,\lambda)\delta_i\delta_j(p_2(\xi))b_0(\xi,\lambda)$, we first write  the terms $ \frac12 \partial_i\partial_jb_0(\xi,\lambda)$ and $\delta_i\delta_j(p_2(\xi))$ in the contraction form:
\begin{align*}
 \frac12 \partial_i\partial_j b_0(\xi,\lambda)
 &=
 -P_2^{ij}(h) B_0^2(h)+4\xi_k \xi_\ell P_2^{ik} P_2^{j\ell}(h)B_0^3(h),
\end{align*}
and
\begin{align*}
\delta_i\delta_j(p_2(\xi))
&=\xi_m\xi_n \,\big[h_{(0)},h_{(1)};P_2^{mn}\big]\,\big(\delta_i\delta_j(h)\big)\\
&+\xi_m\xi_n \,\big[h_{(0)},h_{(1)},h_{(2)};P_2^{mn}\big]\,\big(\delta_i(h)\cdot\delta_j(h)\big)\\
&+\xi_m\xi_n \,\big[h_{(0)},h_{(1)},h_{(2)};P_2^{mn}\big]\,\big(\delta_j(h)\cdot\delta_i(h)\big).
\end{align*}
Therefore, the last term of \eqref{formulab2firstversion}  is given by the following formula:
\begin{align*}
&-\sum_{i,j} \frac12 \partial_i\partial_j b_0(\xi,\lambda) \delta_i\delta_j(p_2(\xi))b_0(\xi,\lambda)=\\
&\qquad+\xi_k\xi_\ell P_2^{ij} B_0^2(h_{(0)}) \,\big[h_{(0)},h_{(1)};P_2^{k\ell}\big]\, B_0(h_{(1)})\big(\delta_i\delta_j(h)\big)\\
&\qquad-4\xi_k \xi_\ell\xi_m\xi_n P_2^{ik} P_2^{j\ell}(h_{(0)})B_0^3(h_{(0)}) \,\big[h_{(0)},h_{(1)};P_2^{mn}\big] \, B_0(h_{(1)})\big(\delta_i\delta_j(h)\big)\\ \nonumber
&\qquad+2\xi_k\xi_\ell P_2^{ij}B_0^2(h_{(0)}) \big[h_{(0)},h_{(1)},h_{(2)};P_2^{k\ell}\big] \, B_0(h_{(2)})\big(\delta_i(h)\cdot\delta_j(h)\big)\\ 
&\qquad-8\xi_k \xi_\ell \xi_m\xi_n P_2^{ik} P_2^{j\ell}B_0^3(h_{(0)}) \,\big[h_{(0)},h_{(1)},h_{(2)};P_2^{mn}\big] \, B_0(h_{(2)})\big(\delta_i(h)\cdot\delta_j(h)\big).
\end{align*}
Adding  all the terms, $b_2(\xi,\lambda)$  is found to be
\begin{equation*}
b_2(\xi,\lambda)=B_{2,1}^{ij}(\xi,\lambda,h_{(0)},h_{(1)})\big(\delta_i\delta_j(h)\big)+B_{2,2}^{ij}(\xi,\lambda,h_{(0)},h_{(1)},h_{(2)})\big(\delta_i(h)\cdot\delta_j(h)\big),
\end{equation*}
where the functions $B_{2,k}^{ij}$ are given by   
\begin{align*}
B_{2,1}^{ij}(\xi,\lambda,t_0,t_1)
=& -B_0(t_0)P_{0,1}^{ij}(t_0,t_1)B_0(t_1)\\
&+2\xi_k\xi_\ell B_0^2(t_0) P_2^{ik}(t_0)P_1^{j\ell}(t_0,t_1) B_0(t_1)\\
&+\xi_k\xi_\ell  B_0^2(t_0)\, P_2^{ij}(t_0) \,\big[t_0,t_1;P_2^{k\ell}\big] B_0(t_1)\\
&-4\xi_k \xi_\ell\xi_m\xi_n B_0^3(t_0)  P_2^{ik}(t_0) P_2^{j\ell}(t_0)\,\big[t_0,t_1;P_2^{mn}\big] B_0(t_1) \,,
\end{align*}
and
\begin{align*}
B_{2,2}^{ij}(\xi,\lambda,t_0,t_1,t_2)&=
-B_0(t_0)P_{0,2}^{ij}(t_0,t_1,t_2)B_0(t_2)\\
&+\xi_k\xi_\ell B_0(t_0)P_1^{ik}(t_0,t_1)B_0(t_1)P^{j\ell}_1(t_1,t_2)B_0(t_2)\\
& -2\xi_k\xi_\ell\xi_m\xi_n P_2^{im}B_0^2(t_0) \,\big[t_0,t_1;P_2^{k\ell}\big] B_0(t_1)P^{jn}_1(t_1,t_2)B_0(t_2)\\
&+2\xi_k\xi_\ell  P_2^{ik}B_0^2(t_0) \,\big[t_0,t_1;P_1^{j\ell}(\cdot,t_2)\big]B_0(t_2)\\
&+2\xi_k\xi_\ell  P_2^{jk}B_0^2(t_0) \,\big[t_1,t_2;P_1^{i\ell}(h_{(0),\cdot})\big] B_0(t_2)\\
&+\xi_k \xi_\ell B_0(t_0)P_1^{ij}(t_0,t_1)B_0(t_1)\,\big[t_1,t_2;P_2^{k\ell}\big]B_0(t_2) \\
& -2 \xi_k \xi_\ell\xi_m\xi_n P_2^{j\ell}\,B_0^2(t_0) \, P_1^{ik}(t_0,t_1) \, B_0(t_1)\,\big[t_1,t_2;P_2^{mn}\big]\, B_0(t_2)  \\
&-2 \xi_k \xi_\ell\xi_m\xi_n B_0(t_0)P_1^{ik}(t_0,t_1)\,  P_2^{j\ell}(t_1)\,B_0^2(t_1) \,\big[t_1,t_2;P_2^{mn}\big]\, B_0(t_2)  \\
&- 2\, \xi_k\xi_\ell\xi_m\xi_n P_2^{ij}B_0^2(t_0) \,\big[t_0,t_1;P_2^{\ell m}\big]\, B_0(t_1)\, \big[t_1,t_2;P_2^{kn}\big]B_0(t_2)  \\ 
&- 4\xi_k\xi_\ell\xi_m\xi_n P_2^{ik}B_0^2(t_0) \,\big[t_0,t_1;P_2^{ j\ell}\big]\, B_0(t_1)\,\big[t_1,t_2;P_2^{mn}\big]\, B_0(t_2) \\
&+ 8\xi_k\xi_\ell\xi_m\xi_n\xi_p\xi_q P_2^{ik} P_2^{jn}B_0^3(t_0) \,\big[t_0,t_1;P_2^{\ell m}\big]\, B_0(t_1)\,\big[t_1,t_2;P_2^{pq}\big]\, B_0(t_2) \\
&+ 4\xi_k\xi_\ell\xi_m\xi_n\xi_p\xi_q P_2^{ik}B_0^2(t_0) \,\big[t_0,t_1;P_2^{\ell m}\big]\,  P_2^{jn}B_0^2(t_1)\,\big[t_1,t_2;P_2^{pq}\big]\, B_0(t_2)\\
&+2\xi_k\xi_\ell P_2^{ij} B_0^2(t_0) \,\big[t_0,t_1,t_2;P_2^{k\ell}\big] \, B_0(t_2)\\ 
&-8\xi_k \xi_\ell \xi_m\xi_n P_2^{ik} P_2^{j\ell}B_0^3(t_0) \,\big[t_0,t_1,t_2;P_2^{mn}\big] \, B_0(t_2). 
\end{align*} 
 Note that in the above formulas, $B_0(t)$ is always a scalar function and commutes with the other terms $P_i^{jk}$ which can be matrix valued functions.
Hence, we can bring all the powers of $B_0$'s in each summand up front and group them with the other scalar terms such as $\xi_j$'s.
 But, the order  of the other terms must stay untouched to respect the noncommutativity of matrix product. 
This is what we will do in the next section while we find a formula for the integral of these terms.

\subsection{Coefficients of the asymptotic expansion of the heat trace}
Using the Cauchy integral formula, we have
\begin{equation*}
e^{-tP}=\frac{-1}{2\pi i}\int_\gamma e^{-t\lambda}(P-\lambda)^{-1}d\lambda,\quad t>0.
\end{equation*}
Here $\gamma$ is  a contour around the non-negative real line with the counter clockwise orientation. 
The expansion of the symbol of the paramatrix $\sigma((P-\lambda)^{-1})$ can be used to find the asymptotic expansion of the localized heat trace  $\Tr(ae^{-tP})$ as $t\to 0^+$ for any $a\in \Ai[\dim]$:
\begin{equation*}
\Tr(ae^{-tP})\sim \sum_{n=0}^\infty c_n(a) t^{(n-\dim)/2}.
\end{equation*} 
Here, each $c_n(a)$ can be written as $\varphi(a b_n)$  for some $b_n=b_n(P)\in \Ai[\dim]$, where 
\begin{equation}\label{localterminbjs}
b_n(P)=\frac1{(2\pi)^\dim}\int_{\mathbb{R}^\dim}\frac{-1}{2\pi i}\int_\gamma b_n(\xi,\lambda)d\lambda d\xi.
\end{equation} 
For more details see e.g. \cite{Connes-Tretkoff2011} for the noncummtative tori or \cite{Gilkey1995} for a similar computation   in the commutative case.
Using the rearrangement lemma  (Lemma \ref{rearrangmentlemma}) and the fact that the contraction map and integration commute,
we can readily  see that 
\begin{equation*}
\begin{aligned}
b_2(P)=&\left(\frac1{(2\pi)^\dim}\int_{\mathbb{R}^\dim}\frac{-1}{2\pi i}\int_\gamma B_{2,1}^{ij}(\xi,\lambda,h_{(0)},h_{(1)})\, e^{-\lambda} d\lambda d\xi\right)\big(\delta_i\delta_j(h)\big)\\
&+\left(\frac1{(2\pi)^\dim}\int_{\mathbb{R}^\dim}\frac{-1}{2\pi i}\int_\gamma B_{2,2}^{ij}(\xi,\lambda,,h_{(0)},h_{(1)},,h_{(2)})\, e^{-\lambda} d\lambda d\xi\right)\big(\delta_i(h)\cdot\delta_j(h)\big).
\end{aligned}
\end{equation*} 
The dependence of $B_{2,k}^{ij}$ on  $\lambda$ comes only from different powers of $B_0$ in its terms, while its dependence on  $\xi_j$'s is the result of appearance of $\xi_j$  as well as of $B_0$ in the terms. 
Therefore, the contour integral will only contain $e^{-\lambda}$ and 
product of powers of $B_0(t_j)$.
Hence, we need to deal with a certain kind of contour integral for which  we shall use the following notation and will call them {\it $T$-functions}:
\begin{equation}\label{Tfunctionsdef}
T_{\vec{n};\vec{\alpha}}(t_0,\cdots,t_n):=\frac{-1}{\pi^{\dim/2}}\int_{\mathbb{R}^{\dim}}\xi_{n_1}\cdots \xi_{n_{2|\vec{\alpha}|-4}}\frac{1}{2\pi i}\int_\gamma e^{-\lambda} B_0^{\alpha_0}(t_0)\cdots B_0^{\alpha_n}(t_n)  d\lambda d\xi,
\end{equation}
where $\vec{n}=(n_1,\cdots,n_{2|\vec{\alpha}|-4})$ and $\vec{\alpha}=(\alpha_0,\cdots,\alpha_n)$.
We study $T$-functions and their properties in Subsection \ref{Tfunctions}.
Now, we  write a formula for $b_2(P)$.
\begin{proposition}\label{b2proposition}
For a positive   Laplace type $h$-differential operator $P$ with the symbol given by \eqref{noncommutativelaplacetypesym},  the term $b_2(P)$ in the contraction form is given by
\begin{equation*}
b_2(P)=(4\pi)^{-\dim/2}\left(B_{2,1}^{ij}(h_{(0)},h_{(1)})\big(\delta_i\delta_j(h)\big)+B_{2,2}^{ij}(h_{(0)},h_{(1)},h_{(2)})\big(\delta_i(h)\cdot\delta_j(h)\big)\right),
\end{equation*}
where the functions are defined by   
\begin{align*}
B_{2,1}^{ij}(t_0,t_1)
=& -T_{;1,1}(t_0,t_1)P_{0,1}^{ij}(t_0,t_1)
+2T_{k \ell;2,1}(t_0,t_1) P_2^{ik}(t_0)P_1^{j\ell}(t_0,t_1)\\
&+T_{k \ell;2,1}(t_0,t_1)\, P_2^{ij}(t_0) \,\big[t_0,t_1;P_2^{k\ell}\big]\\
&-4T_{k\ell  m n;3,1}(t_0,t_1) P_2^{ik}(t_0) P_2^{j\ell}(t_0)\,\big[t_0,t_1;P_2^{mn}\big],
\end{align*}
and
\begin{align*}
B_{2,2}^{ij}(t_0,t_1,t_2)=
&-T_{;1,1}(t_0,t_2) P_{0,2}^{ij}(t_0,t_1,t_2)\\
&+T_{k\ell;1,1,1}(t_0,t_1,t_2)\,   P_1^{ik}(t_0,t_1)P^{j\ell}_1(t_1,t_2)\\
&-2T_{k\ell mn;2,1,1}(t_0,t_1,t_2)\, P_2^{im}(t_0)\,\big[t_0,t_1;P_2^{k\ell}\big]\, P^{jn}_1(t_1,t_2) \\
&+2T_{k\ell;2,1}(t_0,t_2)\,  P_2^{ik}(t_0)\,\big[t_0,t_1;P_1^{j\ell}(\cdot,t_2)\big]\,\\
&+2T_{k\ell;2,1}(t_0,t_2)\, P_2^{jk}(t_0)\,\big[t_1,t_2;P_1^{i\ell}(t_0,\cdot)\big]\,\\
&+T_{k\ell;1,1,1}(t_0,t_1,t_2)\, P_1^{ij}(t_0,t_1)\,\big[t_1,t_2;P_2^{k\ell}\big]\, \\
& -2T_{k\ell mn;2,1,1}(t_0,t_1,t_2)\, P_2^{j\ell}(t_0) P_1^{ik}(t_0,t_1)\,\big[t_1,t_2;P_2^{mn}\big]\,  \\
&-2T_{k\ell mn;1,2,1}(t_0,t_1,t_2)\,  P_1^{ik}(t_0,t_1)P_2^{j\ell}(t_1)\,\big[t_1,t_2;P_2^{mn}\big]\,  \\
&- 2T_{k\ell mn;2,1,1}(t_0,t_1,t_2) \,  P_2^{ij}(t_0)\,\big[t_0,t_1;P_2^{\ell m}\big]\,\big[t_1,t_2;P_2^{kn}\big]\, \\ 
&- 4T_{k\ell mn;2,1,1}(t_0,t_1,t_2)\, P_2^{ik}(t_0)\,\big[t_0,t_1;P_2^{ j\ell}\big]\,\big[t_1,t_2;P_2^{mn}\big]\, \\
&+ 8T_{k\ell mnpq;3,1,1}(t_0,t_1,t_2)\, P_2^{ik}(t_0) P_2^{jn}(t_0)\,\big[t_0,t_1;P_2^{\ell m}\big]\,\big[t_1,t_2;P_2^{pq}\big]\, \\
&+ 4T_{k\ell m n p q;2,2,1}(t_0,t_1,t_2) P_2^{ik}(t_0)\,\big[t_0,t_1;P_2^{\ell m}\big]\,  P_2^{jn}(t_1)\,\big[t_1,t_2;P_2^{pq}\big]\\
&+2T_{k\ell;2,1}(t_0,t_2)\, P_2^{ij}(t_0)\,\big[t_0,t_1,t_2;P_2^{k\ell}\big]\,\\ 
&-8T_{k\ell mn;3,1}(t_0,t_2)\, P_2^{ik}(t_0) P_2^{j\ell}(t_0)[t_0,t_1,t_2;P_2^{mn}\big].
\end{align*} \qed
\end{proposition}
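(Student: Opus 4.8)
The plan is to reduce everything to the explicit expression for $b_2(\xi,\lambda)$ already obtained in Subsection~\ref{rawcomp} and then integrate it. Recall that there we found, in the contraction form,
\[b_2(\xi,\lambda)=B_{2,1}^{ij}(\xi,\lambda,h_{(0)},h_{(1)})\big(\delta_i\delta_j(h)\big)+B_{2,2}^{ij}(\xi,\lambda,h_{(0)},h_{(1)},h_{(2)})\big(\delta_i(h)\cdot\delta_j(h)\big),\]
with the functions $B_{2,1}^{ij}$ and $B_{2,2}^{ij}$ given by explicit closed formulas. First I would substitute this into the formula \eqref{localterminbjs} for $b_2(P)$ and move the $\lambda$- and $\xi$-integrations inside the contraction. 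This interchange is exactly the content of the rearrangement lemma (Lemma~\ref{rearrangmentlemma}): taking $U=\mathbb{R}^\dim\times\gamma$ as the domain of integration, one verifies the required integrability hypothesis, which here follows from the positivity of the principal symbol $p_2$, since this makes $B_0(t)=(P_2^{ij}(t)\xi_i\xi_j-\lambda)^{-1}$ and all its powers decay suitably in $\xi$ and along $\gamma$, together with the exponential factor $e^{-\lambda}$. Consequently the integration acts only on the commutative (operator) part, leaving the noncommutative parts $\delta_i\delta_j(h)$ and $\delta_i(h)\cdot\delta_j(h)$ untouched.

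The next step is purely a matter of reorganizing each summand of $B_{2,1}^{ij}$ and $B_{2,2}^{ij}$. As noted at the end of Subsection~\ref{rawcomp}, each factor $B_0(t_j)$ is a scalar-valued function of $(\xi,\lambda)$ and therefore commutes with everything, whereas the coefficient functions $P_2^{ij},P_1^{ij},P_{0,k}^{ij}$ and the divided differences $[\,\cdot\,;P_2^{k\ell}]$ may be matrix-valued and must retain their order. I would therefore pull all powers of $B_0(t_0),\dots,B_0(t_n)$ and all the $\xi$-monomials to the front of each summand, record which indices appear among the $\xi$'s (this is the multi-index $\vec n$) and the power of $B_0$ occupying each slot (this is $\vec\alpha$), and then recognize the remaining $\lambda$- and $\xi$-integral precisely as the $T$-function $T_{\vec n;\vec\alpha}$ defined in \eqref{Tfunctionsdef}. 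A convenient consistency check along the way is that the number of $\xi$-factors left in each summand always equals $2|\vec\alpha|-4$, matching the index range in \eqref{Tfunctionsdef}; this is forced by the homogeneity of $b_2(\xi,\lambda)$. Carrying this out term by term reproduces the claimed expressions for $B_{2,1}^{ij}(t_0,t_1)$ and $B_{2,2}^{ij}(t_0,t_1,t_2)$, with each $T$-function carrying exactly the indices dictated by its accompanying factors.

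Finally I would reconcile the normalizations. The prefactor in \eqref{localterminbjs} is $\frac1{(2\pi)^\dim}\cdot\frac{-1}{2\pi i}$, whereas the $T$-function in \eqref{Tfunctionsdef} carries $\frac{-1}{\pi^{\dim/2}}\cdot\frac{1}{2\pi i}$; dividing one by the other yields $\frac{\pi^{\dim/2}}{(2\pi)^\dim}=(4\pi)^{-\dim/2}$, which is the overall constant in the statement. The only genuinely delicate point is the justification of the interchange of integration and contraction, i.e.\ verifying the integrability hypothesis of Lemma~\ref{rearrangmentlemma} for the products of resolvent powers appearing here; once that is in place, the remainder is bookkeeping, namely matching each of the many summands against its corresponding $T$-function and checking that the index data $(\vec n,\vec\alpha)$ is transcribed correctly and that the noncommutative order of the matrix-valued factors is preserved.
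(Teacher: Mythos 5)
Your proposal is correct and follows essentially the same route as the paper: substituting the contraction-form expression for $b_2(\xi,\lambda)$ from Subsection \ref{rawcomp} into \eqref{localterminbjs}, invoking Lemma \ref{rearrangmentlemma} to commute the $(\lambda,\xi)$-integration with the contraction, recognizing the resulting scalar integrals of $\xi$-monomials against products of powers of $B_0(t_j)$ as the $T$-functions \eqref{Tfunctionsdef}, and matching the prefactor $\pi^{\dim/2}/(2\pi)^\dim=(4\pi)^{-\dim/2}$. Your added remarks — the integrability check via positivity of $p_2$, the preservation of the order of the matrix-valued factors, and the count $2|\vec\alpha|-4$ of $\xi$-factors forced by homogeneity — are exactly the points the paper leaves implicit in its ``we can readily see'' step.
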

The computation of the higher heat trace densities for a Laplace type $h$-operator can be similarly carried out; expecting more terms in the results.
This would give a way to generalize results obtained for the conformally flat noncommutative two torus in \cite{Connes-Fathizadeh2016}, where $b_4$ of the Laplacian $D^2$ of the Dirac operator $D$ is computed.

\subsection{\texorpdfstring{$T$}{T}-functions and their properties}\label{Tfunctions}
Evaluating $T$-functions \eqref{Tfunctionsdef}, are the only parts of formulas for $B_{2,1}^{i,j}$ and $B_{2,2}^{i,j}$ that need to be evaluated, is not always an easy task.
We give a concise integral formula for $T$-functions and study their properties in this section  and later on show that in some cases, they can be computed explicitly.  

We start with the contour integral in \eqref{Tfunctionsdef}. 
It is evident that there are functions $f_{\alpha_1,\cdots,\alpha_{n}}$ such that
\begin{equation*}
\frac{-1}{2\pi i}\int_\gamma e^{-\lambda} B_0^{\alpha_0}(t_0)\cdots B_0^{\alpha_n}(t_n)  d\lambda
=f_{\alpha_0,\cdots,\alpha_{n}}\left(\|\xi\|_{t_0}^{2},\cdots,\|\xi\|_{t_n}^{2}\right).
\end{equation*}
Here, we denoted $P_2^{ij}(t_k)\xi_i\xi_j$ by $\|\xi\|_{t_k}^2$.
Examples of such functions are
\begin{equation*}
f_{1,1}(x_0,x_1)=-\frac{e^{-x_0}}{x_0-x_1}-\frac{e^{-x_1}}{x_1-x_0},\qquad 
f_{2,1}(x_0,x_1)=-\frac{e^{-x_0}}{x_0-x_1}- \frac{e^{-x_0}}{(x_0-x_1)^2}+\frac{e^{-x_1}}{(x_1-x_0)^2}.
\end{equation*}
We can indeed find a general formula for $f_{\vec{\alpha}}$ in terms of derivatives of the divided differences of the function $\exp({-\lambda})$:
\begin{align*}
f_{\vec{\alpha}}(x_0,\cdots,x_{n})
&=\frac{-1}{2\pi i}\int_\gamma e^{-\lambda} \prod_{j=0}^{n} (x_j-\lambda)^{-\alpha_j} d\lambda\\
&=\frac{(-1)^{|\vec{\alpha}|+1}}{2\pi i}\int_\gamma e^{-\lambda} \prod_{j=0}^{n} (\lambda-x_j)^{-\alpha_j} d\lambda\\
&=\frac{(-1)^{|\vec{\alpha}|+1}}{\vec{\beta}!}{\partial^{\vec{\beta}}_x}\left(\frac{1}{2\pi i}\int_\gamma e^{-\lambda} \prod_{j=0}^n (\lambda-x_j)^{-1} d\lambda\right)\\
&=\frac{(-1)^{|\vec{\alpha}|+1}}{\vec{\beta}!}{\partial^{\vec{\beta}}_x}\left[x_0,\cdots,x_n;\lambda\mapsto e^{-\lambda}\right],
\end{align*}
where $\vec{\beta}$ is the multi-index $(\alpha_0-1,\cdots,\alpha_{n}-1)$.
In the last formula we used a  contour integral formula for divided differences of an analytic function which can be found in \cite[Section 1.7.]{Milne1951}.
Now, we can apply Hermite-Genocchi formula \eqref{HermiteGenocchi} for $f(\lambda)=\exp({-\lambda})$ and rewrite $f_{\vec{\alpha}}$ as an integral $\frac{1}{\vec{\beta}!}\int_{\Sigma_n}e^{-\left(\sum_{j=0}^n s_j x_j\right)} \prod_{j=0}^ns_j^{\alpha_j-1} ds$ over the standard simplex.
We have
\begin{align*}
T_{\vec{n};\vec{\alpha}}(t_0,\cdots,t_n)
&=\pi^{-\frac{\dim}2}\int_{\mathbb{R}^{\dim}}\xi_{n_1}\cdots \xi_{n_{2|\vec{\alpha}|-4}} f_{\alpha_0,\cdots,\alpha_{n}}\left(\|\xi\|_{t_0}^{2},\cdots,\|\xi\|_{t_n}^{2}\right) d\xi\\
&=\pi^{-\frac{\dim}2}\int_{\mathbb{R}^{\dim}}\xi_{n_1}\cdots \xi_{n_{2|\vec{\alpha}|-4}} \frac{1}{\vec{\beta}!}\int_{\Sigma_n}e^{- \langle\sum_{j=0}^n s_j P_2(t_j)\xi,\xi\rangle} \prod_{j=0}^ns_j^{\alpha_j-1} ds d\xi\\
&=\frac{\pi^{-\frac{\dim}2}}{\vec{\beta}!}\int_{\Sigma_n}\prod_{j=0}^ns_j^{\alpha_j-1} \int_{\mathbb{R}^{\dim}}\xi_{n_1}\cdots \xi_{n_{2|\vec{\alpha}|-4}} e^{- \langle\sum_{j=0}^n s_j P_2(t_j)\xi,\xi\rangle}  d\xi ds \\
&=\frac{1}{2^{|\vec{\alpha}|-2}\vec{\beta}!}\int_{\Sigma_n}\prod_{j=0}^ns_j^{\alpha_j-1}\, \frac{\underset{\vec{n}}{\sum \prod} \left( \sum_{j=0}^n s_j P_2(t_j) \right)^{-1}_{n_in_{\sigma(i)}}}{\sqrt{\det\left( \sum_{j=0}^n s_j P_2(t_j) \right)} }  ds. 
\end{align*}
In the last step, we applied the  Wick's theorem  (See e.g. \cite{Lando-zvonkin2004}) for the inner integral.
We use $\sum \prod$ to denoted
$\sum_{\sigma\in \Pi_{|\vec{\alpha}|-2}}\prod_{i\in S/\sigma},$ 
where $\Pi_{k}$ is the set of all pairings on $S=\{1,\cdots,2k\}$.
For instance, 
\begin{equation*}
\underset{(k,\ell)}{\sum \prod} A_{n_in_{\sigma(i)}}=A_{k\ell}\quad \text{ and }\quad 
\underset{(k,\ell,m,n)}{\sum \prod} A_{n_in_{\sigma(i)}}=A_{k\ell}A_{mn}+A_{km}A_{\ell n}+A_{k n}A_{\ell m}.
\end{equation*}
Putting it all together, we have proved the following lemma.
\begin{lemma}\label{integralformofTna} 
Let $P_2(t)$ be a positive definite $\dim\times \dim$ matrix of smooth real functions and let   $B_0(t_0)=(P_2^{ij}(t_0)\xi_i\xi_j-\lambda)^{-1}$. 
Then
\begin{equation*}
T_{\vec{n};\vec{\alpha}}(t_0,\cdots,t_n)=\frac{1}{2^{|\vec{\alpha}|-2}\vec{\beta}!}\int_{\Sigma_n}\prod_{j=0}^ns_j^{\alpha_j-1} \, \frac{\underset{\vec{n}}{\sum \prod} P^{-1}(s)_{n_in_{\sigma(i)}}}{\sqrt{\det P(s)}}  ds, 
\end{equation*}
where $P(s)=\sum_{j=0}^n s_j P_2(t_j)$  and $\vec{\beta}=(\alpha_0-1,\cdots,\alpha_{n}-1)$.\qed
\end{lemma}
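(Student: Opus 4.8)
The plan is to evaluate the $T$-function by peeling it apart one integral at a time, beginning with the innermost contour integral. The key observation is that every resolvent factor $B_0(t_j) = (P_2^{k\ell}(t_j)\xi_k\xi_\ell - \lambda)^{-1}$ depends on $\xi$ only through the single scalar $\|\xi\|_{t_j}^2 := P_2^{k\ell}(t_j)\xi_k\xi_\ell$. Consequently the contour integral $f_{\vec\alpha} := \frac{-1}{2\pi i}\int_\gamma e^{-\lambda}B_0^{\alpha_0}(t_0)\cdots B_0^{\alpha_n}(t_n)\, d\lambda$ is a scalar function of the $n+1$ variables $x_j = \|\xi\|_{t_j}^2$, and the problem reduces to (i) evaluating $f_{\vec\alpha}$ in closed form and (ii) carrying out the resulting Gaussian integral over $\xi$ together with the simplex integral that $f_{\vec\alpha}$ will produce. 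Since the two overall minus signs (one in the definition of $T_{\vec n;\vec\alpha}$, one in $f_{\vec\alpha}$) cancel, I can work throughout with $T_{\vec n;\vec\alpha} = \pi^{-\dim/2}\int_{\mathbb{R}^\dim}\xi_{n_1}\cdots\xi_{n_{2|\vec\alpha|-4}}\, f_{\vec\alpha}\, d\xi$.

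For step (i), I would write $f_{\vec\alpha}(x) = \frac{(-1)^{|\vec\alpha|+1}}{2\pi i}\int_\gamma e^{-\lambda}\prod_j(\lambda - x_j)^{-\alpha_j}\, d\lambda$ and absorb the repeated poles into derivatives via $(\lambda-x_j)^{-\alpha_j} = \frac{1}{(\alpha_j-1)!}\partial_{x_j}^{\alpha_j-1}(\lambda-x_j)^{-1}$; this pulls a differential operator $\frac{1}{\vec\beta!}\partial^{\vec\beta}_x$ outside the integral, with $\vec\beta = (\alpha_0-1,\cdots,\alpha_n-1)$. What remains inside is the contour representation of the divided difference $[x_0,\cdots,x_n;\lambda\mapsto e^{-\lambda}]$ (cf. \cite[Section 1.7]{Milne1951}). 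Applying the Hermite-Genocchi formula \eqref{HermiteGenocchi} to $e^{-\lambda}$ converts the divided difference into $\int_{\Sigma_n}(-1)^n e^{-\sum_j s_j x_j}\, ds$, and the external $\partial^{\vec\beta}_x$ brings down a factor $\prod_j(-s_j)^{\alpha_j-1}$. A sign count --- the prefactor $(-1)^{|\vec\alpha|+1}$, the factor $(-1)^n$ from differentiating the exponential $n$ times, and $(-1)^{|\vec\beta|}$ from $\partial^{\vec\beta}_x$ --- collapses to $+1$, so that $f_{\vec\alpha}(x) = \frac{1}{\vec\beta!}\int_{\Sigma_n}\prod_j s_j^{\alpha_j-1}e^{-\sum_j s_j x_j}\, ds$.

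For step (ii), substitute $x_j = \|\xi\|_{t_j}^2$ so that $\sum_j s_j x_j = \langle P(s)\xi,\xi\rangle$ with $P(s) = \sum_j s_j P_2(t_j)$, interchange the $\xi$- and $s$-integrations by Fubini, and evaluate the inner Gaussian moment $\int_{\mathbb{R}^\dim}\xi_{n_1}\cdots\xi_{n_{2|\vec\alpha|-4}}e^{-\langle P(s)\xi,\xi\rangle}\, d\xi$ by Wick's theorem. Since the Gaussian $e^{-\langle P(s)\xi,\xi\rangle}$ has covariance $\tfrac12 P(s)^{-1}$, the $2(|\vec\alpha|-2)$-fold moment equals $\frac{\pi^{\dim/2}}{\sqrt{\det P(s)}}\cdot\frac{1}{2^{|\vec\alpha|-2}}\underset{\vec n}{\sum\prod}P(s)^{-1}_{n_i n_{\sigma(i)}}$. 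Plugging this back, the two factors of $\pi^{\dim/2}$ cancel against the $\pi^{-\dim/2}$ out front, and collecting the constants $\frac{1}{\vec\beta!}$ and $\frac{1}{2^{|\vec\alpha|-2}}$ yields exactly the claimed simplex-integral formula.

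I expect the genuine work to lie not in any single calculation but in the bookkeeping that links them: matching the power $\alpha_j$ of each resolvent to the derivative order $\beta_j = \alpha_j-1$ and to the exponent $s_j^{\alpha_j-1}$ in the simplex measure, and checking that every sign cancels. Two analytic points also need care: the interchange of integrals requires the integrability hypothesis (the setup of Lemma \ref{rearrangmentlemma} supplies the necessary decay), and the Gaussian integral converges precisely because $P(s)$ remains positive definite on $\Sigma_n$ --- which it does, being a convex combination of the positive definite matrices $P_2(t_j)$. The parity constraint that $2|\vec\alpha|-4$ be even (so that Wick's theorem returns a nonzero pairing sum rather than a vanishing odd moment) is automatic, and I would note it in passing to justify the final combinatorial step.
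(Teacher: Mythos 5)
Your proposal is correct and follows essentially the same route as the paper's own proof: reduce the contour integral to derivatives of the divided difference of $e^{-\lambda}$ via the contour-integral representation, convert to a simplex integral by the Hermite--Genocchi formula, then interchange integrals and apply Wick's theorem for the Gaussian moments. Your explicit sign bookkeeping and the remarks on Fubini and on positive definiteness of $P(s)$ over $\Sigma_n$ only make explicit what the paper leaves implicit.
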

For our purposes, we need only two types of $T$-functions:
\begin{equation*}
\begin{aligned}
T_{\vec{n};\alpha,1}&=\frac{1}{2^{\alpha-1}(\alpha-1)!}\int_0^1 s^{\alpha-1}\frac{\underset{\vec{n}}{\sum \prod} P^{-1}(s)_{n_in_{\sigma(i)}}}{\sqrt{\det P(s)}}  ds,\\
T_{\vec{n};\alpha_0,\alpha_1,1}&=
 \frac{1}{2^{\alpha_0+\alpha_1-1}\Gamma(\alpha_0)\Gamma(\alpha_1)}
\int_0^1\hspace{-0.2cm}\int_0^{\scriptstyle 1-s_1} s_0^{\scriptscriptstyle \alpha_0-1}s_1^{\scriptscriptstyle \alpha_1-1}
\frac{\underset{\vec{n}}{\sum \prod} P^{-1}(s_0,s_1)_{n_in_{\sigma(i)}}}{\sqrt{\det P(s_0,s_1)}} ds_0ds_1.
\end{aligned}
\end{equation*}

\begin{remark}\label{basicpropTfunctions}
We list some basic properties of $T$-functions below:
\begin{enumerate}
\item[$i)$] $T_{n_{\sigma(1)},\cdots,n_{\sigma(k)};\vec{\alpha}}=T_{\vec{n};\vec{\alpha}},\qquad \sigma\in S(\vec{n}),\quad \sigma\in S_{k},$
\item[$ii)$] $T_{\vec{n};\alpha_{\sigma(0)},\alpha_{\sigma(1)},\cdots,\alpha_{\sigma(n)}}(t_{\sigma(0)},t_{\sigma(1)},\cdots,t_{\sigma(n)})=T_{\vec{n};\alpha_{0},\alpha_{1},\cdots,\alpha_{n}}(t_{0},t_{1},\cdots,t_{n}), \quad \sigma\in S_{n+1}$
\item[$iii)$] $\displaystyle{\lim_{t_n\to t_0}}T_{\vec{n};\alpha_{0},\cdots,\alpha_{n-1},1}(t_{0},t_{1},\cdots,t_{n})=T_{\vec{n};\alpha_{0}+1,\cdots,\alpha_{n-1}}(t_{0},t_{1},\cdots,t_{n-1}),$
\item[$iv)$] $T_{\vec{n};\alpha_{0},\cdots,\alpha_{j-1},0,\alpha_{j+1},\cdots,\alpha_{n}}(t_{0},\cdots\hspace{-0.05cm},t_{n})=T_{\vec{n};\alpha_{0},\cdots,\alpha_{j-1},\alpha_{j+1},\cdots,\alpha_{n}}(t_{0},\cdots\hspace{-0.05cm},t_{j-1},t_{j+1},\cdots\hspace{-0.05cm},t_{n}),$
\item[$v)$] $\frac{\partial}{\partial t_j} T_{\vec{n};\alpha_{0},\cdots,\alpha_{n}}(t_{0},\cdots,t_{n})= -\alpha_j P'(t_j)^{k,\ell}T_{\vec{n},k,\ell;\alpha_{0},\cdots,\alpha_{j}+1,\cdots,\alpha_{n}}(t_{0},\cdots,t_{n}).$
\end{enumerate} 
\end{remark}

Finally, we would like to point out a recursive formula for $T$-functions.
 \begin{lemma}\label{Tfunctionsrelations}
Let $\vec{n}=(n_1,\cdots,n_\ell)$ and $\vec{\tilde{n}}=(n_1,\cdots,n_\ell,m,n)$. 
 Then, we have 
\begin{equation*}
T_{\vec{\tilde{n}};\alpha,1}(t_0,t_1)(P_2(t_0)-P_2(t_1))^{mn}=\begin{cases}
 T_{\vec{n};\alpha-1,1}(t_0,t_1)-\frac{\underset{\vec{n}}{\sum \prod} g(t_0)_{n_in_{\sigma(i)}}}{2^{\alpha-2}\Gamma(\alpha)\sqrt{\det P_2(t_0)}}, & \alpha\geq 2,\\ &\\
\frac{2\underset{\vec{n}}{\sum \prod} g(t_1)_{n_in_{\sigma(i)}}}{\sqrt{\det P_2(t_1)}} -\frac{2\underset{\vec{n}}{\sum \prod} g(t_0)_{n_in_{\sigma(i)}}}{\sqrt{\det P_2(t_0)}}, & \alpha= 1,
\end{cases}
\end{equation*}
and
\begin{equation*}
T_{\vec{\tilde{n}};\alpha_0,\alpha_1,1}(t_0,t_1,t_2)(P_2(t_1)-P_2(t_2))^{mn}=\hspace{-0.1cm}\begin{cases}
 T_{\vec{n};\alpha_0,\alpha_1-1,1}(t_0,t_1,t_2)- T_{\vec{n};\alpha_0,\alpha_1}(t_0,t_1), & \alpha_1\geq 2,\\  &\\
 T_{\vec{n};\alpha_0,1}(t_0,t_2)- T_{\vec{n};\alpha_0,1}(t_0,t_1), & \alpha_1= 1.
\end{cases}
\end{equation*}
\end{lemma}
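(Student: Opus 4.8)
The plan is to start from the integral representation of $T$-functions in Lemma \ref{integralformofTna} and to exploit the fact that each difference $P_2(t_a)-P_2(t_b)$ on the left-hand side is, up to relabelling, the derivative of the interpolated matrix $P(s)=\sum_j s_j P_2(t_j)$ with respect to one barycentric coordinate. For the first identity I write $P(s)=sP_2(t_0)+(1-s)P_2(t_1)$, so that $C:=P_2(t_0)-P_2(t_1)=P'(s)$ is constant in $s$; for the second I write $P(s_0,s_1)=s_0P_2(t_0)+s_1P_2(t_1)+(1-s_0-s_1)P_2(t_2)$, so that $C:=P_2(t_1)-P_2(t_2)=\partial_{s_1}P(s_0,s_1)$. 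In both cases set $N:=\underset{\vec{n}}{\sum\prod}P^{-1}_{n_in_{\sigma(i)}}$ and $Q:=N/\sqrt{\det P}$, the integrand of the $T$-function indexed by the shorter tuple $\vec{n}$.

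The \emph{key step} is to show that contracting the two extra upper indices $m,n$ of the longer tuple $\vec{\tilde{n}}=(\vec{n},m,n)$ against $C^{mn}$ turns the integrand of the longer $T$-function into a total derivative, namely the pointwise identity
\[
C^{mn}\,\frac{\underset{\vec{\tilde{n}}}{\sum\prod}P^{-1}_{n_in_{\sigma(i)}}}{\sqrt{\det P}}=-2\,\partial_s Q
\]
(with $\partial_s$ replaced by $\partial_{s_1}$ in the three–point case). To prove it I split the pairings of $\vec{\tilde{n}}$ into those pairing $m$ with $n$ and those attaching $m,n$ to distinct entries of $\vec{n}$. The first type contributes $C^{mn}P^{-1}_{mn}=\tr(CP^{-1})$ times $N$; the second, via $C^{mn}P^{-1}_{m\,n_a}P^{-1}_{n\,n_b}=(P^{-1}CP^{-1})_{n_an_b}$ together with the symmetry between the two ways of attaching $m$ and $n$, contributes $2$ times the sum over pairings of $\vec{n}$ with one pair's factor replaced by $(P^{-1}CP^{-1})$. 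The matrix identities $\partial_sP^{-1}=-P^{-1}CP^{-1}$ and $\partial_s(\det P)^{-1/2}=-\tfrac12(\det P)^{-1/2}\tr(P^{-1}C)$ identify these two contributions with $\tr(CP^{-1})N-2N'$, which divided by $\sqrt{\det P}$ is exactly $-2\partial_s Q$. This combinatorial bookkeeping — matching the factor $2$ and the trace term to the product-rule derivative of $Q$ — is the part I expect to demand the most care.

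Given the reduction to $-2\partial_sQ$ (resp.\ $-2\partial_{s_1}Q$), the rest is integration by parts against the weight $s^{\alpha-1}$ (resp.\ $s_1^{\alpha_1-1}$) over the simplex. For the first identity, $\int_0^1 s^{\alpha-1}\partial_sQ\,ds=[s^{\alpha-1}Q]_0^1-(\alpha-1)\int_0^1 s^{\alpha-2}Q\,ds$: the interior integral rebuilds $T_{\vec{n};\alpha-1,1}$ once the normalising constants $2^{|\vec{\alpha}|-2}\vec{\beta}!$ are matched, while the endpoint $s=1$, where $P(1)=P_2(t_0)$ and hence $P^{-1}(1)=g(t_0)$, yields the explicit term $\underset{\vec{n}}{\sum\prod}g(t_0)/\big(2^{\alpha-2}\Gamma(\alpha)\sqrt{\det P_2(t_0)}\big)$. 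For $\alpha\ge 2$ the endpoint $s=0$ vanishes, giving the first line; for $\alpha=1$ there is no interior term and both endpoints survive, with $P(0)=P_2(t_1)$ producing the second explicit term, hence the second line. For the second identity I integrate by parts in $s_1$ at fixed $s_0$ over $s_1\in[0,1-s_0]$ (after reordering by Fubini): the endpoint $s_1=1-s_0$ lies on the face $s_2=0$, where $P$ collapses to the two-point matrix $s_0P_2(t_0)+(1-s_0)P_2(t_1)$ and contributes $-T_{\vec{n};\alpha_0,\alpha_1}(t_0,t_1)$, while the interior integral rebuilds $T_{\vec{n};\alpha_0,\alpha_1-1,1}$; for $\alpha_1=1$ both faces $s_1=1-s_0$ and $s_1=0$ survive, the latter giving the two-point matrix $s_0P_2(t_0)+(1-s_0)P_2(t_2)$, which together produce $T_{\vec{n};\alpha_0,1}(t_0,t_2)-T_{\vec{n};\alpha_0,1}(t_0,t_1)$. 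In each case a short verification that the powers of $2$ and the Gamma-factors of the two normalisations cancel completes the argument.
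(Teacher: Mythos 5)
Your proof is correct and follows essentially the same route as the paper: the pointwise identity $C^{mn}\,\underset{\vec{\tilde{n}}}{\sum \prod} P^{-1}(s)_{n_in_{\sigma(i)}}/\sqrt{\det P(s)}=-2\,\partial_s\big(\underset{\vec{n}}{\sum \prod} P^{-1}(s)_{n_in_{\sigma(i)}}/\sqrt{\det P(s)}\big)$, obtained from Jacobi's formula, followed by integration by parts in the simplex variable, with the boundary faces producing the explicit endpoint terms and the two-point $T$-functions. The only difference is one of detail, not of method: the paper asserts the key differential identity directly and writes out only the two-point integration by parts, whereas you also supply the pairing combinatorics behind that identity and the three-point boundary analysis, both of which check out.
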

\begin{proof}
The key point here lies in the following formula which is a direct result of   Jacobi's formula: $d\det(A)=\det(A)\tr(A^{-1}dA)$. 
$$\frac{d}{ds}\left(\frac{\underset{\vec{n}}{\sum \prod} P^{-1}(s)_{n_in_{\sigma(i)}}}{\sqrt{\det(P(s))}}\right)=-\frac12 \frac{{\underset{\vec{\tilde{n}}}{\sum \prod} P^{-1}(s)_{n_in_{\sigma(i)}}}}{\sqrt{\det(P(s))}}(P_2(t_0)-P_2(t_1))^{mn}.$$
Then, an integration by parts will complete the proof:
\begin{align*}
&T_{\vec{\tilde{n}};\alpha,1}(t_0,t_1)(P_2(t_0)-P_2(t_1))^{mn}\\
&=\frac{1}{2^{\alpha-1}\Gamma(\alpha)}\int_0^1 s^{\alpha-1}\frac{\underset{\vec{\tilde{n}}}{\sum \prod} P^{-1}(s)_{n_in_{\sigma(i)}}}{\sqrt{\det P(s)}}  ds (P_2(t_0)-P_2(t_1))^{mn}\\
&=\frac{-1}{2^{\alpha-2}\Gamma(\alpha)}\int_0^1 s^{\alpha-1}\frac{d}{ds}\left(\frac{ \underset{\vec{n}}{\sum \prod} P^{-1}(s)_{n_in_{\sigma(i)}}}{\sqrt{\det P(s)}}\right)  ds \\
&=\frac{-1}{2^{\alpha-2}\Gamma(\alpha)}\left(\left. s^{\alpha-1}\frac{ \underset{\vec{n}}{\sum \prod} P^{-1}(s)_{n_in_{\sigma(i)}}}{\sqrt{\det P(s)}}\right]_0^1-(\alpha-1)\int_0^1 s^{\alpha-2}\frac{ \underset{\vec{n}}{\sum \prod} P^{-1}(s)_{n_in_{\sigma(i)}}}{\sqrt{\det P(s)}}  ds\right).
\end{align*}
Similar steps works to prove the second equation.
\end{proof}
This lemma can be used to further simplify the expression   for $B_{2,1}^{i,j}$ and $B_{2,2}^{i,j}$ given in Proposition \ref{b2proposition}.
As an example, we use it here to simplify the third term of $B_{2,1}^{i,j}$:
\begin{align*}
T_{k \ell;2,1}(t_0,t_1)\, P_2^{ij}(t_0) \,\big[t_0,t_1;P_2^{k\ell}\big]
=&(T_{;1,1}(t_0,t_1)-T_{;2}(t_0))\, P_2^{ij}(t_0)\frac1{t_0-t_1}\\
=&T_{;1,1}(t_0,t_1) P_2^{ij}(t_0)\frac1{t_0-t_1}-\frac{1}{\sqrt{\det P_2(t_0)}}\, P_2^{ij}(t_0)\frac1{t_0-t_1}.
\end{align*}
Applying the lemma on the other terms, we can find a new and simpler formula for $B_{2,1}^{i,j}$ given by
\begin{align*}
B_{2,1}^{ij}(\xi,\lambda,t_0,t_1)
=
&T_{;1,1}(t_0,t_1)\left(-P_{0,1}^{ij}(t_0,t_1)+ P_2^{ij}(t_0)\frac1{t_0-t_1}\right)\\
&+2T_{k \ell;2,1}(t_0,t_1)  P_2^{ik}(t_0)\left(P_1^{j\ell}(t_0,t_1)-2 P_2^{j\ell}(t_0)\,\frac1{t_0-t_1}\right).
\end{align*}
In this new form, each term of expression may not be smooth anymore and this is the reason that we won't write down the simplified formula for $B_{2,2}^{ij}$.
However, we shall use this lemma in Section \ref{Totalcurvaturesection} where we want to give a formula for the total curvature of the curved noncommutative tori.

We will, in some cases, evaluate $T$-functions.
For conformally flat metrics,  we shall find a closed formula for $T$-functions in terms of divided differences (Lemma \ref{Tfunctionsconfomallyflatlemma}).
A similar formula for the twisted product metrics is computed in Lemma \ref{Talphaklemma}. 
We also have a formula for some of the $T$-functions in dimension two (see  Lemma \ref{Tfunctionsdimtwo}).
\begin{example}\label{doubalytsitedtf}
In this example, we find some $T$-functions for a   Laplace type $h$-differential operator on $\nctorus[4]$ with the principal symbol given by entries of the matrix
\begin{equation*}
f(t)g^{-1}\oplus \tilde{f}(t)\tilde{g}^{-1},
\end{equation*} 
where $g$ and $\tilde{g}$ are  two by two positive definite symmetric real matrices and $f$ and $\tilde{f}$ are  positive smooth functions.
First we note that
\begin{equation*}
\sqrt{\det P(s)}=\frac1{\sqrt{|g||\tilde{g}|}}(s f(t_0)+(1-s)f(t_1))(s \tilde{f}(t_0)+(1-s)\tilde{f}(t_1)).
\end{equation*} 
The integral formula from Lemma \ref{integralformofTna} for $T_{\vec{n};\alpha,1}$ turns out to be a basic integral and we have
\begin{align*}
T_{;1,1}(t_0,t_1)
=& \frac{\sqrt{|g||\tilde{g}|}}{f(t_0)\tilde{f}(t_1)-f(t_1)\tilde{f}(t_0)}\log\left(\frac{f(t_0)\tilde{f}(t_1)}{f(t_1)\tilde{f}(t_0)}\right),
\end{align*}
and for $1\leq k, l\leq 2$, we have
\begin{align*}
&T_{k,l;2,1}(t_0,t_1)=
\frac{g_{kl}\sqrt{|g||\tilde{g}|} \left(f(t_1) \tilde{f}(t_0)+f(t_0) \tilde{f}(t_1) \left(\log \left(\frac{f(t_0) \tilde{f}(t_1)}{f(t_1) \tilde{f}(t_0)}\right)-1\right)\right)}{2 f(t_0) \left(f(t_1) \tilde{f}(t_0)-f(t_0) \tilde{f}(t_1)\right)^2}.
\end{align*}
The functions $T_{k,l;2,1}$ for $3\leq k, l\leq 4$ are given by the same formula after we interchange $f$ with $\tilde{f}$ and ${g}$ with $\tilde{g}$.
These $T$-functions can be computed in the same manner in cases with more than two metrics and functions, called multiply twisted product metrics of two tori. 
\end{example}

Further exploration of the properties of $T$-functions  is an important problem.
In particular, finding an explicit formula of $T$-functions for more general functional metrics is an open problem.
Along these lines, The following point may be noticed.
 The $T$-functions for conformally flat metrics are given by derivatives of divided differences which appear as coefficient of the polynomial interpolation. 
Similarly, integrals over the standard simplex, such as the one found in Lemma \ref{Tfunctions} for $T$-functions, appear as the coefficeints of a multivariate polynomial interpolation called Kergin interpolation (see e.g. \cite{Micchelli1980}).

\section{Functional metrics and their scalar curvature density}

In this section, we  apply  Proposition \ref{b2proposition} to a geometric operator on the noncommutative tori equipped with  a new class of metrics,  called   functional metrics, which are introduced in this work for the first time.
First, we construct the Laplacian on functions of $(\nctorus[\dim],g)$ and find an $h$-differential operator $\lap_{0,g}$ which is antiunitary equivalent to the Laplacian. 
We then write the symbol of  $\lap_{0,g}$ in the contraction form and apply Proposition \ref{b2proposition} to find $b_2(\lap_{0,g})$.

The geometric importance of the second density of the heat trace of the Laplacian on functions (or Laplacian of the Dirac operator)  originates from the  fact in spectral geometry that  on manifolds \cite[Section]{Gilkey1995}  this density is a multiple of the scalar curvature of the metric.
This fact provided a way to define the scalar curvature $\scalar$, called scalar curvature density, for noncommutative spaces;
an idea which was initiated by  \cite{Connes-Moscovici2014,Fathizadeh-Khalkhali2013} see also \cite[Section 11]{Connes-Marcolli2008}.
Here, we define the {\it scalar curvature density} of a given functional metric to be 
\begin{equation}\label{scalarcurvaturedef}
\scalar=(4\pi)^{\frac\dim2}b_2(\lap_{0,g}).
\end{equation}    
We compute the scalar curvature density for two classes of examples in all dimensions: conformally flat metrics  and twisted product flat metrics.

\subsection{Functional metrics and their Laplacian}\label{functionalmetricsec}
We first introduce a new class of metrics on the noncommutative tori and then a   Laplace type $h$-differential operator to study the spectral invariants of the geometries defined by these metrics.
 
\begin{definition}
Let $h$ be a selfadjoint smooth element of the noncommutative $\dim$-tori
and let $g_{ij}:\mathbb{R}\to \mathbb{R}$, for $1\leq i,j\leq \dim$, be smooth functions such that the matrix $\big(g_{ij}(t)\big)$ is a positive definite matrix for every $t$ in a neighborhood of the spectrum of $h$.
We shall refer to $g_{ij}(h)$ as a {\it functional metric} on $\A[\dim]$.
\end{definition}
\noindent The condition is  equivalent to saying that  $g_{ij}(h)\xi^i\xi^j\in \A[\dim]^+$ for every $(\xi^1,\cdots,\xi^{\dim})\in \mathbb{R}^{\dim}$.

Without loss of generality, we always assume that functions $g_{ij}$ are Schwartz class functions. 
Since  the spectrum of $h$ is a compact subset of $\mathbb{R}$, we can always replace $g_{ij}(t)$ with Schwartz or even compactly supported functions $\tilde{g}_{ij}$ such that $\tilde{g}_{ij}(h)=g_{ij}(h)$. 
In our notations we frequently use the property of functional calculus that  $F\circ g_{ij}(h)=F(g_{ij})(h)$ for any given smooth function $F:\mathbb{R}^{\dim^2}\to \mathbb{R}$. 
For example, we shall denote  the determinant of $(g_{ij}(h))$ by either $|g(h)|$ or $|g|(h)$.
Also, since the matrix $(g_{ij}(x))$ is invertible on a neighborhood of the spectrum of $h$,
we use $g^{ij}(h)$ to denote the entries of $(g_{ij}(h))^{-1}$.

To construct the Laplacian on $\nctorus[\dim]$ equipped with a functional metric $g_{ij}(h)$, we first consider the exterior derivative $\delta:\Ai[\dim]\to \Omega^1\nctorus[\dim]=\Ai[\dim]\otimes \mathbb{C}^{\dim}$ which is defined by
\begin{equation*}
\delta(a)=\left( i\delta_1(a),  i\delta_2(a),\cdots, i\delta_\dim(a)\right).
\end{equation*} 
The key role of the functional metric $g_{ij}(h)$ then is in turning $\Ai[\dim]$ and $\Omega^1\nctorus[\dim]$ into Hilbert spaces and finding the adjoint of $\delta$.
Analogous to the classical theory, we define the inner product on functions $\Ai[\dim]$ to be
\begin{equation*}
\langle a,b\rangle_0=\varphi(b^*a\sqrt{|g|}(h)),\quad a,b\in\Ai[\dim].
\end{equation*}
In addition, the inner product on 1-forms $\Omega^1\nctorus[\dim]$ is defined to be
\begin{equation*}
\left\langle \vec{a}, \vec{b}\right\rangle_1=\varphi\left(g^{ij}(h)b_i^*a_j\sqrt{|g|}(h)\right).
\end{equation*}
Here $\vec{a}=(a_1,a_2,\cdots,a_\dim)$, $\vec{b}=(b_1,b_2,\cdots,b_\dim)$ are elements of $\Omega^1 \nctorus[\dim]$. 
The Hilbert space completion of functions $\Ai[\dim]$ and 1-forms $\Omega^1\nctorus[\dim]$ with respect to the corresponding inner products will be denoted by $\mathcal{H}_{0,g}$ and $\mathcal{H}_{1,g}$ respectively.

The exterior derivative, as a densely defined operator from $\mathcal{H}_{0,g}$ to $\mathcal{H}_{1,g}$, is a closed unbounded operator from $\mathcal{H}_0$ to $\mathcal{H}_1$ and its formal adjoint $\delta^* :\mathcal{H}_{1,g}\to\mathcal{H}_{0,g}$ on elements of $\Omega^1\nctorus[\dim]$ is given by
\begin{equation*}
\delta^*(\vec{b})=-i b_jg^{jk}(h)\delta_k\left(\sqrt{|g|}(h)\right)|g|^{-\frac12}(h)-ib_j\delta_k\left(g^{jk}(h)\right)-i\delta_k(b_j)g^{jk}(h).
\end{equation*} 
Therefore, $\delta^*\delta:\mathcal{H}_{0,g}\to \mathcal{H}_{0,g}$ on elements of $\Ai[\dim]$ is given by
\begin{equation*}
\delta_j(a)g^{jk}(h)\delta_k(|g|^{\frac12}(h))|g|^{-\frac12}(h)+\delta_j(a)\delta_k(g^{jk}(h))+i\delta_k(\delta_j(a))g^{jk}(h).
\end{equation*}
To carry the spectral analysis of the Laplacian $\delta^*\delta:\mathcal{H}_{0,g}\to \mathcal{H}_{0,g}$, we switch to the antiunitary equivalent setting as follows.
Let $\mathcal{H}_0$ be the Hilbert space obtained by the GNS construction from $\A[\dim]$ using the nonperturbed tracial state $\varphi$. 
\begin{proposition}\label{laplacianforg}
The operator $\delta^*\delta:\mathcal{H}_{0,g}\to \mathcal{H}_{0,g}$ is antiunitary equivalent to  a  Laplace type $h$-differential operator $\lap_{0,g}:\mathcal{H}_0\to \mathcal{H}_0$ whose symbol, when expressed in the contraction form, has the functional parts given by 
\begin{align*}
&P_2^{jk}(t_0)= g^{jk}(t_0),\\
&P_1^{jk}(t_0,t_1)
=|g|^{-\frac14}(t_0)\big[t_0,t_1;|g|^{\frac14}\big]g^{jk}(t_1)+\big[t_0,t_1;g^{jk}\big]+|g|^{\frac14}(t_0)g^{jk}(t_0)\big[t_0,t_1;|g|^{-\frac14}\big],\\
&P_{0,1}^{jk}(t_0,t_1)
=|g|^{\frac14}(t_0)g^{jk}(t_0)\big[t_0,t_1;|g|^{-\frac14}\big],\\
&P_{0,2}^{jk}(t_0,t_1,t_2)
=|g|^{-\frac14}(t_0)\big[t_0,t_1;g^{jk}|g|^{\frac12}\big] \big[t_1,t_2;|g|^{-\frac14}\big]+2|g|^{\frac14}(t_0)g^{jk}(t_0)\big[t_0,t_1,t_2;|g|^{-\frac14}\big].
\end{align*}
 \end{proposition}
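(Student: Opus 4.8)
The plan is to make the antiunitary equivalence explicit and then transport $\delta^*\delta$ through it, reading off the symbol term by term. First I would introduce the conjugate-linear map $W:\mathcal{H}_{0,g}\to\mathcal{H}_0$ defined on the dense subspace $\Ai[\dim]$ by $W(a)=|g|^{\frac14}(h)\,a^*$. Using the trace property of $\varphi$, the selfadjointness of the $h$-functions, and $\sqrt{|g|}(h)=\big(|g|^{\frac14}(h)\big)^2$, one checks
\begin{equation*}
\langle Wa,Wb\rangle_{\mathcal{H}_0}=\varphi\big((Wb)^*\,Wa\big)=\varphi\big(b\,\sqrt{|g|}(h)\,a^*\big)=\varphi\big(a^* b\,\sqrt{|g|}(h)\big)=\langle b,a\rangle_0,
\end{equation*}
so $W$ preserves inner products antilinearly and, being bijective on $\Ai[\dim]$ with inverse $W^{-1}(\xi)=\xi^*\,|g|^{-\frac14}(h)$, extends to an antiunitary of the completions. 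I would then \emph{define} $\lap_{0,g}:=W\,(\delta^*\delta)\,W^{-1}$, which by construction is a positive selfadjoint operator on $\mathcal{H}_0$ that is antiunitary equivalent to $\delta^*\delta$; the content of the proposition is the identification of its symbol.

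The reason for choosing an \emph{antiunitary} rather than a unitary intertwiner is structural: the operator $\delta^*\delta$, as written above, carries its coefficients (such as $g^{jk}(h)$) on the \emph{right} of the derivatives, whereas Connes' symbol calculus requires them on the \emph{left}. Conjugation by $W$ invokes the adjoint, which reverses the order of products and, through $\delta_j(b)^*=-\delta_j(b^*)$, restores the derivatives acting on $\xi$; since all functions of $h$ mutually commute, the conjugating weights $|g|^{\pm\frac14}(h)$ cancel in the top-order term. Concretely, for $\xi\in\Ai[\dim]$ I set $a=W^{-1}(\xi)=\xi^*|g|^{-\frac14}(h)$, expand $\delta^*\delta(a)$ by the Leibniz rule, and apply $W$. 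Schematically the top-order term, where both derivatives fall on $\xi^*$, transforms as
\begin{equation*}
W\big(\delta_k\delta_j(\xi^*)\,|g|^{-\frac14}(h)\,g^{jk}(h)\big)=|g|^{\frac14}(h)\,g^{jk}(h)\,|g|^{-\frac14}(h)\,\delta_j\delta_k(\xi)=g^{jk}(h)\,\delta_j\delta_k(\xi),
\end{equation*}
which identifies the principal symbol $P_2^{jk}(t_0)=g^{jk}(t_0)$.

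For the lower-order parts I would systematically apply Theorem \ref{derivationoffunctionofh} and Corollary \ref{secondderivationoffunctionofh} to every factor $|g|^{\pm\frac14}(h)$, $|g|^{\frac12}(h)$ and $g^{jk}(h)$ that a derivation hits, thereby generating the divided differences $[t_0,t_1;|g|^{\frac14}]$, $[t_0,t_1;g^{jk}]$, $[t_0,t_1;|g|^{-\frac14}]$ directly in contraction form. Collecting the coefficient of $\xi_j$ acting on $\delta_i(h)$ should yield the three summands of $P_1^{jk}$: one from differentiating the metric coefficient $g^{jk}(h)$ (the $[t_0,t_1;g^{jk}]$ term) together with the two weight terms $|g|^{-\frac14}(t_0)[t_0,t_1;|g|^{\frac14}]g^{jk}(t_1)$ and $|g|^{\frac14}(t_0)g^{jk}(t_0)[t_0,t_1;|g|^{-\frac14}]$ arising from a single derivative meeting the conjugating factors of $W^{\pm1}$. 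The zeroth-order symbol then splits as in \eqref{noncommutativelaplacetypesym}: the $\delta_i\delta_j(h)$-coefficient $P_{0,1}^{jk}$ gathers the single-divided-difference contribution $|g|^{\frac14}(t_0)g^{jk}(t_0)[t_0,t_1;|g|^{-\frac14}]$, while the $\delta_i(h)\cdot\delta_j(h)$-coefficient $P_{0,2}^{jk}$ gathers the terms in which two derivations strike functions of $h$, producing the second-order divided difference $[t_0,t_1,t_2;|g|^{-\frac14}]$ via Corollary \ref{secondderivationoffunctionofh} alongside the mixed product $[t_0,t_1;g^{jk}|g|^{\frac12}]\,[t_1,t_2;|g|^{-\frac14}]$.

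The main obstacle I anticipate is precisely this last bookkeeping for $P_{0,2}^{jk}$. Because its noncommutative part is the \emph{ordered} product $\delta_i(h)\cdot\delta_j(h)$, I must track which derivation acts first, the sign flips introduced by $\delta_j(b)^*=-\delta_j(b^*)$ when $W$ is applied, and the reindexing of the mutually commuting $h$-functions so that each resulting operator-part function lands on the correct slot among $t_0,t_1,t_2$. Verifying that the contributions of the explicit factor $\delta_k(g^{jk}(h))$ combine with those coming from the weights $|g|^{\pm\frac14}(h)$ to collapse into the stated closed forms — in particular that the symmetric and antisymmetric divided-difference combinations reorganize into $[t_0,t_1;g^{jk}|g|^{\frac12}]\,[t_1,t_2;|g|^{-\frac14}]+2|g|^{\frac14}(t_0)g^{jk}(t_0)[t_0,t_1,t_2;|g|^{-\frac14}]$ — is where the calculation is delicate; everything else is routine Leibniz expansion once the contraction-form differentiation rules of Theorem \ref{derivationoffunctionofh} are applied consistently.
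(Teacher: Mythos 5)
Your proposal is correct and takes essentially the same approach as the paper: your single antiunitary $W(a)=|g|^{\frac14}(h)\,a^*$ is precisely the inverse of the paper's composition $WJ$ (right multiplication by $|g|^{-\frac14}(h)$ combined with the conjugation $a\mapsto a^*$), so the transported operator is the same $\lap_{0,g}=JW^*\delta^*\delta WJ$. Both arguments then use $\delta_j(a^*)=-\delta_j(a)^*$ to flip the right-sided coefficients of $\delta^*\delta$ to the left, and conclude by applying Theorem \ref{derivationoffunctionofh} and Corollary \ref{secondderivationoffunctionofh} to express $p_2,p_1,p_0$ in contraction form, exactly as in the paper's proof.
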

 \begin{proof}
The right multiplication by $|g|^{-1/4}$ extends to a unitary operator $W:\mathcal{H}_0 \to \mathcal{H}_{0,g}$ and the map $a\mapsto a^*$ gives an antiunitary map on $\mathcal{H}_0$ which we denote by $J$.
Then, $\delta^*\delta$ is antiunitary equivalent to the operator $JW^* \delta^*\delta WJ$, which we denote by $\lap_{0,g}$.
Using the basic  property of $*$ and its relation with the derivations, $\delta_j(a^*)=-\delta_j(a)^*$,  we immediately obtain
\begin{equation}\label{laplacianonfunctions}
\begin{aligned}
\lap_{0,g}(a)
=&g^{jk}(h)\delta_k\delta_j(a)+\Big(|g|^{-\frac14}\delta_k\big(|g|^{\frac14}g^{jk}(h)\big) +|g|^{\frac14}g^{jk}(h)\delta_k(|g|^{-\frac14})\Big)\delta_j(a)\\
&+ |g|^{-\frac14}\delta_k\left(|g|^{\frac12}g^{jk}(h)\delta_j(|g|^{-\frac14})\right)a.
\end{aligned}
\end{equation}
The operator $\lap_{0,g}$ is clearly a differential operator and the homogeneous parts of its symbol reads
\begin{align*}
p_2(\xi)
&= g^{jk}(h)\xi_j\xi_k,\\
p_1(\xi)
&=\left(|g|^{-\frac14}\delta_k(|g|^{\frac14})g^{jk}(h)+\delta_k(g^{jk}(h))+|g|^{\frac14}g^{jk}(h)\delta_k(|g|^{-\frac14})\right)\xi_j,\\
p_0(\xi)&= |g|^{-\frac14}\delta_k(|g|^{\frac12})g^{jk}(h)\delta_j(|g|^{-\frac14})+|g|^{\frac14}\delta_k(g^{jk}(h))\delta_j(|g|^{-\frac14})+|g|^{\frac14}g^{jk}(h)\delta_k\delta_j(|g|^{-\frac14}). 
\end{align*}
We then apply Theorem \ref{derivationoffunctionofh} and  express the above terms in the contraction form.
\end{proof}
\begin{remark}
The analysis of functional metrics on manifolds  is an interesting topic to be further investigated.
In particular, classifying Riemannian metrics that can be written locally or globally as a functional metric, with $h$ being a smooth function on the manifold, seems a nice problem. \\
The scalar curvature of a functional metric $g_{ij}(h)dx^idx^j$ on an open subset $U$ of $\mathbb{R}^\dim$  is given by
\begin{equation*}
\begin{aligned}
R
=&\Big(
-g^{-1}\tr\left(g^{-1}g''\right)
-\frac{1}{4} g^{-1}\tr\left(g^{-1}g'\right)^2+\frac{3}{4} g^{-1}\tr\left(g^{-1}g'g^{-1}g'\right)
+\tr\left(g^{-1}g'\right) g^{-1}g'g^{-1}\\
&\,\,
+g^{-1}g''g^{-1}
-\frac{3}{2} g^{-1}g'g^{-1}g'g^{-1}\Big)^{\mu \nu }\frac{\partial h}{\partial x^\mu}\frac{\partial h}{\partial x^\nu}
 +\Big(g^{-1}g'g^{-1}-g^{-1}\tr\left(g^{-1}g'\right)\Big)^{\mu \nu }\frac{\partial^2 h}{\partial x^\mu \partial x^\nu}.
\end{aligned}
\end{equation*}
\end{remark}

\subsection{Conformally flat metrics}
An important case of the functional   metric  is the conformally flat metric
\begin{equation}\label{conflatmetric}
g_{ij}(t)=f(t)^{-1}g_{ij},
\end{equation}
 where $f$ is a positive smooth function and $g_{ij}$'s are the entries of a constant metric on $\mathbb{R}^\dim$.
The formula \eqref{laplacianonfunctions} for the operator $\lap_{0,g}$  is simplified for  conformally flat metric in general dimension $d$
\begin{equation*}
\begin{aligned}
\lap_{0,g}(a)
&=f(h)g^{jk}\delta_k\delta_j(a)
+g^{jk}\Big(f(h)^{\frac{\dim}4}\delta_k\big(f(h)^{1-\frac{\dim}4}\big)+f(h)^{1-\frac{\dim}4}\delta_k\big(f(h)^{\frac{\dim}4}\big)\Big)\delta_j(a)\\
&+ g^{jk}f(h)^{\frac{\dim}4}\delta_k\left(f(h)^{1-\frac{\dim}2}\delta_j(f(h)^{\frac{\dim}4})\right)a,
\end{aligned}
\end{equation*}
The functions given by Proposition \ref{laplacianforg}, for the conformally flat metrics, gives us the following:
\begin{equation}\label{symfunconformal}
\begin{aligned}
&P_2^{jk}(t_0)= g^{jk}f(t_0),\\
&P_1^{jk}(t_0,t_1)
=g^{jk}\Big(f(t_0)^{\frac{\dim}4}\big[t_0,t_1;f^{1-\frac{\dim}4}\big]+f(t_0)^{1-\frac{\dim}4}\big[t_0,t_1;f^{\frac{\dim}4}\big]\Big),\\
&P_{0,1}^{jk}(t_0,t_1)=g^{jk}f(t_0)^{1-\frac{\dim}4}\big[t_0,t_1;f^{\frac{\dim}4}\big],\\
&P_{0,2}^{jk}(t_0,t_1,t_2)= 
g^{jk}\Big(f(t_0)^{\frac{\dim}4}\big[t_0,t_1;f^{1-\frac{\dim}2}\big]\big[t_1,t_2;f^{\frac{\dim}4}\big]+2f(t_0)^{1-\frac{\dim}4}\big[t_0,t_1,t_2;f^{\frac{\dim}4}\big]\Big).
\end{aligned}
\end{equation}
A careful examination of  formula \eqref{symfunconformal} shows that  for any  function $P^{ij}_{\bullet}$ there exist a function $P_{\bullet}$ such that 
$P^{ij}_{\bullet}=g^{ij}P_{\bullet}.$
We have similar situation with the $T$-functions for conformally flat metrics. 
\begin{lemma}\label{Tfunctionsconfomallyflatlemma}
Let  $\vec{\alpha}$ and $\vec{n}=(n_1,\cdots,n_{2|\vec{\alpha}|-4})$ be two multi-indicies.
Then the $T$-function $T_{\vec{n,\alpha}}$ for the conformally flat metric \eqref{conflatmetric} is of the form  
$$T_{\vec{n,\alpha}}(t_0,\cdots,t_n)=\sqrt{|g|}\underset{\vec{n}}{\sum \prod}g_{n_in_{\sigma(i)}} T_{\vec{\alpha}}(t_0,\cdots,t_n).$$ 
The function $T_{\vec{\alpha}}$ in dimension $d\neq 2$ is given by 
\begin{equation}\label{Tfunctionsconfomallyflat}
T_{\vec{\alpha}}(t_0,\cdots,t_n)
=\frac{(-1)^{|\vec{\alpha}|-1}\Gamma(\frac{\dim}2-1)}{\Gamma(\frac{\dim}2+|\vec{\alpha}|-2)} \left.\partial_x^{\vec{\beta}} \big[x_0,\cdots,x_n;u^{1-\frac{d}2}\big]\right|_{x_j=f(t_j)},
\end{equation}
where $\vec{\beta}=(\alpha_0-1,\cdots,\alpha_{n}-1)$.
\end{lemma}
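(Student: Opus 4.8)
The plan is to feed the conformally flat symbol into the integral formula of Lemma \ref{integralformofTna} and then recognize the resulting scalar simplex integral as a derivative of a divided difference via the Hermite--Genocchi formula. Concretely, for $P_2^{ij}(t)=f(t)g^{ij}$ we have $P_2(t)=f(t)g^{-1}$, so the matrix in Lemma \ref{integralformofTna} is a scalar multiple of a constant matrix,
$$P(s)=\sum_{j=0}^n s_j P_2(t_j)=F(s)\,g^{-1},\qquad F(s):=\sum_{j=0}^n s_j f(t_j),$$
whence $P^{-1}(s)=F(s)^{-1}g$ and $\sqrt{\det P(s)}=F(s)^{\dim/2}|g|^{-1/2}$. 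Since $\vec{n}$ has $2|\vec{\alpha}|-4$ entries, i.e. $|\vec{\alpha}|-2$ pairs, each factor of $P^{-1}(s)_{n_in_{\sigma(i)}}$ carries one power $F(s)^{-1}$ and a constant $g_{n_in_{\sigma(i)}}$, so $\underset{\vec{n}}{\sum \prod}P^{-1}(s)_{n_in_{\sigma(i)}}=F(s)^{-(|\vec{\alpha}|-2)}\underset{\vec{n}}{\sum \prod}g_{n_in_{\sigma(i)}}$. Substituting these, the $s$-independent factor $\sqrt{|g|}\,\underset{\vec{n}}{\sum \prod}g_{n_in_{\sigma(i)}}$ pulls out of the integral, which establishes the claimed factorization and leaves the scalar function
$$T_{\vec{\alpha}}(t_0,\dots,t_n)=\frac{1}{2^{|\vec{\alpha}|-2}\vec{\beta}!}\int_{\Sigma_n}\prod_{j=0}^n s_j^{\alpha_j-1}\,F(s)^{-(|\vec{\alpha}|-2+\dim/2)}\,ds.$$

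Next I would identify this integral with a derivative of a divided difference. Setting $x_j=f(t_j)$ and $c=1-\dim/2$, the exponent equals $c-n-|\vec{\beta}|$ with $\vec{\beta}=(\alpha_0-1,\dots,\alpha_n-1)$. Applying Hermite--Genocchi \eqref{HermiteGenocchi} to $\phi(u)=u^{c}$, for which $\phi^{(n)}(u)=\tfrac{\Gamma(c+1)}{\Gamma(c-n+1)}u^{c-n}$, gives $\big[x_0,\dots,x_n;u^{c}\big]=\tfrac{\Gamma(c+1)}{\Gamma(c-n+1)}\int_{\Sigma_n}\big(\sum_j s_j x_j\big)^{c-n}ds$. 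Differentiating under the integral sign and using $\partial_{x_j}\big(\sum_k s_k x_k\big)^{p}=p\,s_j\big(\sum_k s_k x_k\big)^{p-1}$ repeatedly yields the falling factorial
$$\partial_x^{\vec{\beta}}\Big(\sum_k s_k x_k\Big)^{c-n}=\Big(\prod_j s_j^{\beta_j}\Big)\frac{\Gamma(c-n+1)}{\Gamma(c-n-|\vec{\beta}|+1)}\Big(\sum_k s_k x_k\Big)^{c-n-|\vec{\beta}|}.$$
Because $\prod_j s_j^{\beta_j}=\prod_j s_j^{\alpha_j-1}$ and $c-n-|\vec{\beta}|=2-\dim/2-|\vec{\alpha}|$, this integrand is exactly the one appearing in $T_{\vec{\alpha}}$, so $\partial_x^{\vec{\beta}}\big[x_0,\dots,x_n;u^{1-\dim/2}\big]=\tfrac{\Gamma(2-\dim/2)}{\Gamma(3-\dim/2-|\vec{\alpha}|)}\int_{\Sigma_n}\prod_j s_j^{\alpha_j-1}F(s)^{-(|\vec{\alpha}|-2+\dim/2)}\,ds$.

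Finally I would convert the Gamma prefactor $\Gamma(2-\dim/2)/\Gamma(3-\dim/2-|\vec{\alpha}|)$ into the target $\Gamma(\dim/2-1)/\Gamma(\dim/2+|\vec{\alpha}|-2)$. Writing $z=\dim/2-1$ and invoking the reflection formula $\Gamma(w)\Gamma(1-w)=\pi/\sin\pi w$ on both ratios, together with $\sin\big(\pi(z+|\vec{\alpha}|-1)\big)=(-1)^{|\vec{\alpha}|-1}\sin\pi z$ (valid since $|\vec{\alpha}|-1\in\mathbb{Z}$), gives
$$\frac{\Gamma(3-\dim/2-|\vec{\alpha}|)}{\Gamma(2-\dim/2)}=(-1)^{|\vec{\alpha}|-1}\frac{\Gamma(\dim/2-1)}{\Gamma(\dim/2+|\vec{\alpha}|-2)},$$
which produces the sign $(-1)^{|\vec{\alpha}|-1}$ and the Gamma ratio in \eqref{Tfunctionsconfomallyflat}. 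Assembling the three steps expresses $T_{\vec{\alpha}}$ as a constant multiple of $\partial_x^{\vec{\beta}}\big[x_0,\dots,x_n;u^{1-\dim/2}\big]$ of precisely the advertised shape.

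The routine inputs (the Gaussian/Wick reduction and Hermite--Genocchi) are already packaged in Lemmas \ref{integralformofTna} and the formula \eqref{HermiteGenocchi}; the delicate part is the bookkeeping of the normalizing constants, namely reconciling the Wick factor $2^{-(|\vec{\alpha}|-2)}$ and the simplex factor $\vec{\beta}!$ from Step~1 against the falling factorial generated by $\partial_x^{\vec{\beta}}$ in Step~2, and correctly handling the sign and the shift of Gamma arguments from $\{2-\dim/2,\,3-\dim/2-|\vec{\alpha}|\}$ to $\{\dim/2-1,\,\dim/2+|\vec{\alpha}|-2\}$ via the reflection formula in Step~3; this is where one must be most careful in matching the stated prefactor. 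The exclusion $\dim=2$ is exactly the degenerate case $c=1-\dim/2=0$, where $u^{1-\dim/2}$ is constant and its divided differences vanish while $\Gamma(\dim/2-1)=\Gamma(0)$ is singular, so that regime requires the separate logarithmic treatment recorded in Lemma \ref{Tfunctionsdimtwo}.
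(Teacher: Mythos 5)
Your strategy coincides with the paper's own proof: specialize the integral formula of Lemma \ref{integralformofTna} to $P_2(t)=f(t)g^{-1}$, pull the constant matrix data $\sqrt{|g|}\,\underset{\vec{n}}{\sum \prod}g_{n_in_{\sigma(i)}}$ out of the simplex integral, and identify the remaining scalar integral with $\partial_x^{\vec{\beta}}$ of a divided difference by combining the Hermite--Genocchi formula \eqref{HermiteGenocchi} with differentiation under the integral sign. The paper performs exactly these moves, only ordered so that the $x$-derivatives are introduced \emph{before} Hermite--Genocchi is invoked, and that ordering matters for the constants, as explained next.

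The one genuine defect is in your Steps 2--3: you represent the falling factorials as $\Gamma(c+1)/\Gamma(c-n+1)$ and $\Gamma(c-n+1)/\Gamma(c-n-|\vec{\beta}|+1)$ with $c=1-\dim/2$, and then pass from $\Gamma(2-\dim/2)$, $\Gamma(3-\dim/2-|\vec{\alpha}|)$ to the stated ratio via the reflection formula. For even $\dim\geq 4$ all of these are Gamma values at nonpositive integers, hence undefined, and the reflection formula degenerates there because $\sin\big(\pi(\dim/2-1)\big)=0$; so, as written, your argument covers only odd $\dim$, while the lemma is asserted (and used in the paper, e.g.\ for $\dim=4$) for every $\dim\neq 2$. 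The conclusion is still correct, but to get it you should either argue by continuity in $\dim$ or, much more simply, never introduce those Gammas: the constants produced by $\partial_x^{\vec{\beta}}$ and by Hermite--Genocchi are the products $(c-n)(c-n-1)\cdots(c-n-|\vec{\beta}|+1)$ and $c(c-1)\cdots(c-n+1)$, whose factors are all strictly negative when $\dim>2$, so extracting one sign per factor turns them into $(-1)^{|\vec{\beta}|}\,\Gamma(\dim/2+|\vec{\alpha}|-2)/\Gamma(\dim/2+n-1)$ and $(-1)^{n}\,\Gamma(\dim/2+n-1)/\Gamma(\dim/2-1)$, Gamma functions with positive arguments whose product immediately yields the sign $(-1)^{|\vec{\alpha}|-1}$ and the ratio in \eqref{Tfunctionsconfomallyflat} with no reflection formula at all; this is precisely how the paper's proof stays well defined in all dimensions $\dim\neq2$. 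A last bookkeeping point: your Step 1 correctly carries the normalization $\frac{1}{2^{|\vec{\alpha}|-2}\vec{\beta}!}$ of Lemma \ref{integralformofTna} into $T_{\vec{\alpha}}$, so what your three steps actually produce is that constant times the right-hand side of \eqref{Tfunctionsconfomallyflat}; the paper's proof evaluates the bare simplex integral and silently drops this factor (it resurfaces in the $2^{\alpha-1}$ of the example following the lemma), so you should state explicitly which normalization your $T_{\vec{\alpha}}$ carries rather than asserting that the constants match ``precisely.''
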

\begin{proof}
First note that the terms involving $P(\vec{s})=\sum_{j=0}^n s_j f(t_j)$ are given by
\begin{equation*}
P^{-1}(\vec{s})_{k\ell}=g_{k\ell}\Big(\sum_{j=0}^{n} s_j f(t_j)\Big)^{-1}, \qquad \qquad 
\sqrt{\det P(\vec{s})}=\frac1{|g|}\Big(\sum_{j=0}^{n} s_j f(t_j)\Big)^{\frac{\dim}2}.
\end{equation*}
This implies that the $T$-function $T_{\vec{n,\alpha}}$ can be written as $\sqrt{|g|}\underset{\vec{n}}{\sum \prod}g_{n_in_{\sigma(i)}} T_{\vec{\alpha}}$, where $T_{\vec{\alpha}}$ has an integral formula which we evaluate below.
Let $x_j$, for $j=0,\cdots,\leq n$, be positive and distinct real numbers. 
Then for $\dim\neq 2$, we have
\begin{equation*}
\begin{aligned}
&\int_{\Sigma_n}\prod_{j=0}^ns_j^{\alpha_j-1}\Big(\sum_{j=0}^{n} s_j x_j\Big)^{-\frac{\dim}2-|\vec{\alpha}|+2}  ds \\
&=\frac{(-1)^{|\vec{\alpha}|-n-1}\Gamma(\frac{\dim}2+n-1)}{\Gamma(\frac{\dim}2+|\vec{\alpha}|-2)}\partial_x^{\vec{\beta}}\int_{\Sigma_n}\Big(\sum_{j=0}^{n} s_j x_j\Big)^{-\frac{\dim}2-n+1}  ds\\
&=\frac{(-1)^{|\vec{\alpha}|-1}\Gamma(\frac{\dim}2-1)}{\Gamma(\frac{\dim}2+|\vec{\alpha}|-2)} \partial_x^{\vec{\beta}} \big[x_0,\cdots,x_n;u^{1-\frac{d}2}\big].
\end{aligned}
\end{equation*}
Here again $\vec{\beta}=(\alpha_0-1,\cdots,\alpha_{n}-1)$.
Replacing $x_j$ by $f(t_j)$, we find the formula \eqref{Tfunctionsconfomallyflat} for functions $T_{\vec{\alpha}}$.
\end{proof}
As an example, we have
\begin{equation*}
\begin{aligned}
T_{\alpha,1}(t_0,t_1)=&\frac{(-1)^{\alpha}\Gamma(\frac{\dim}2-1)}{2^{\alpha-1}\Gamma(\frac{\dim}2+\alpha-1)}\times \\
&\Big(\frac{f(t_1)^{1-\frac{\dim}2}}{(f(t_1)-f(t_0))^{\alpha}}-\sum_{m=0}^{\alpha-1}\frac{(-1)^m \Gamma(\frac{\dim}2+m-1)}{\Gamma(\frac{\dim}2-1)m!} \frac{f(t_0)^{-\frac{\dim}2-m+1}}{(f(t_1)-f(t_0))^{\alpha-m}}\Big).
\end{aligned}
\end{equation*} 

Note that  for dimension two, $T_{\alpha,1}(t_0,t_1)$ can be obtained by taking the limit of \eqref{Tfunctionsconfomallyflat} as  $\dim$ approaches 2. 
When $f(t)=t$, we have  
\begin{equation*}
T_{\alpha,1}(t_0,t_1)=\frac{(-1)^{\alpha-1}}{2^{\alpha-1} \Gamma(\alpha)^2} \partial_{t_0}^{\alpha-1}\big[t_0,t_1;\log(u)\big].
\end{equation*}
We shall not consider dimension two separately, but just as the limit $\dim\to 2$.

\begin{remark}
It was first noted in \cite{Liu2018-II} that for conformally flat metrics, the integral that defines functions $T_{\vec{n,\alpha}}$ can be written in terms of Gauss hypergeometric ${}_2F_1$ and Appell  hypergeometric $F_1$ functions.  
More precisely, in our notation, we have
\begin{equation*}
T_{\vec{n};\alpha,1}(t_0,t_1)=\frac{\Gamma(\frac{d}{2})   \underset{\vec{n}}{\sum \prod} g_{n_in_{\sigma(i)}}}{2^{\alpha-1}(\alpha-1)! \Gamma(\frac{d}{2}+\alpha-1)  }  f(t_1)^{-\frac{\dim}2-\alpha+1} {}_2F_1^{(\alpha-1)}\Big(\frac{\dim}2,1;2;1-\frac{f(t_0)}{f(t_1)}\Big),
\end{equation*}
and
\begin{equation*}
\begin{aligned}
T_{\vec{n};\alpha_0,\alpha_1,1}(t_0,t_1,t_2)=& \frac{ \Gamma(\frac{d}2+1) \underset{\vec{n}}{\sum \prod}g_{n_in_{\sigma(i)}}}{2^{\alpha_0+\alpha_1} \Gamma(\frac{d}2+\alpha_0+\alpha_1+1)\Gamma(\alpha_0)\Gamma(\alpha_1)} \times\\
&f(t_2)^{-\frac{\dim}2-\alpha_0-\alpha_1+1} F_1^{(\alpha_0-1,\alpha_1-1)}\left( \frac{\dim}2+1;1,1;3; 1-\frac{f(t_0)}{f(t_2)},1-\frac{f(t_1)}{f(t_2)} \right).
\end{aligned}
\end{equation*}
What we found in  \eqref{Tfunctionsconfomallyflat} is that these hypergeometric  functions for the specific parameters, can be written in terms of divided differences of the function $u^{1-\frac{\dim}2}$.
\end{remark}

Finally, we can substitute \eqref{symfunconformal} and $T$-functions \eqref{Tfunctionsconfomallyflat} in the formulas from Proposition \ref{b2proposition} and obtain the curvature of conformally flat noncommutative torus of dimension $\dim$. 
\begin{theorem}\label{scalarcrvatureconformal}
The scalar curvature of the $\dim$-dimensional noncommutative tori $\mathbb{T}_\theta^\dim$ equipped with the metric $f(h)^{-1}g_{ij}$ is given by
\begin{equation*}
R=\sqrt{|g|} \Big(K_{\dim}(h_{(0)},h_{(1)})(\triangle(h))+H_{\dim}(h_{(0)},h_{(1)},h_{(2)})(\Box(h))\Big),
\end{equation*}
where  $\lap(h)=g^{ij}\delta_i\delta_j(h)$, $\Box(h)=g^{ij}\delta_i(h)\cdot\delta_j(h)$. 
The functions $K_d$ and $H_d$ are given by
\begin{equation}\label{HKfunctions}
\begin{aligned}
K_{\dim}(t_0,t_1)=&K_{\dim}^t(f(t_0),f(t_1))\big[t_0,t_1;f\big],\\
H_{\dim}(t_0,t_1,t_2)=&H_{\dim}^t(f(t_0),f(t_1),f(t_2))\big[t_0,t_1;f\big]\big[t_1,t_2;f\big]\\
&+2K_{\dim}^t(f(t_0),f(t_2))\big[t_0,t_1,t_2;f\big],
\end{aligned}
\end{equation}
Where $K^t_d$ and $H^t_d$ are the functions $K_d$ and $H_d$ when $f(t)=t$.
For $\dim\neq 2$, they can be computed to be
\begin{align*}
K_{\dim}^t(x,y)=\frac{4\, x^{2-\frac{3 \dim }{4}} y^{2-\frac{3 \dim }{4}} }{\dim(\dim -2) (x-y)^3}\left((\dim -1) x^{\frac\dim 2} y^{\frac{\dim }{2}-1}-(\dim -1) x^{\frac{\dim }{2}-1} y^{\frac \dim 2}-x^{\dim -1}+y^{\dim -1}\right),
\end{align*}
and
\begin{align*}
H_d^t(x,y,z)=&\frac{2 x^{-\frac{3 \dim }{4}} y^{-\dim } z^{-\frac{3 \dim }{4}}}{(\dim -2) \dim  (x-y)^2 (x-z)^3 (y-z)^2}\times\\
 \Big(
 & x^{\dim } y^{\dim } z^2(x-y) \left(3 x^2 y-2 x^2 z-4 x y^2+4 x y z-2 x z^2+y z^2\right)
\\
 &+x^{\dim } y^{\frac{\dim }{2}+1} z^{\frac{\dim }{2}+1} (x-z)^2 (z-y) (\dim  x+(1-\dim ) y)
\\
 &
 +x^{\dim }y^3 z^{\dim } (z-x)^3
 + x^{\frac{\dim }{2}+1} y^{\frac{3 \dim }{2}}z^2 (x-y) (x-z)^2
\\
 &
 +2 (\dim -1) x^{\frac{\dim }{2}+1} y^{\dim } z^{\frac{\dim }{2}+1} (x-y) (x-z) (z-y) (x-2 y+z)
\\
 &-x^{\frac{\dim }{2}+1} y^{\frac{\dim }{2}+1} z^{\dim } (x-y) (x-z)^2 ((1-\dim ) y+\dim  z)
 -x^2 y^{\frac{3 \dim }{2}} z^{\frac{\dim }{2}+1} (x-z)^2 (z-y)
\\
 &
 +x^2 y^{\dim } z^{\dim } (y-z) \left(x^2 y-2 x^2 z+4 x y z-2 x z^2-4 y^2 z+3 y z^2\right)\Big).
\end{align*}
These functions for the dimension two are given by
\begin{align*}
K_2^t(x,y)=&-\frac{\sqrt{x} \sqrt{y} }{(x-y)^3}((x+y) \log (x/y)+2(y-x)),\\
H_2^t(x,y,z)=&\frac{2 \sqrt{x} \sqrt{z}}{(x-y)^2 (x-z)^3 (y-z)^2}\times\\
&\Big(-(x-y) (x-z) (y-z) (x-2 y+z) +y (x-z)^3 \log (y)  \\
&\,\,+(y-z)^2 (-2 x^2+x y+y z)\log (x) -(x-y)^2 (x y+zy-2 z^2)\log (z))\Big).
\end{align*}
\end{theorem}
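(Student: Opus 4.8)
The plan is to treat the statement as an explicit specialization of Proposition~\ref{b2proposition}, feeding in the conformally flat data from \eqref{symfunconformal} and Lemma~\ref{Tfunctionsconfomallyflatlemma}, and then simplifying. First I would start from the definition \eqref{scalarcurvaturedef}, $R=(4\pi)^{\dim/2}b_2(\lap_{0,g})$. Since Proposition~\ref{b2proposition} carries an overall factor $(4\pi)^{-\dim/2}$, the two prefactors cancel and
\begin{equation*}
R=B_{2,1}^{ij}(h_{(0)},h_{(1)})\big(\delta_i\delta_j(h)\big)+B_{2,2}^{ij}(h_{(0)},h_{(1)},h_{(2)})\big(\delta_i(h)\cdot\delta_j(h)\big),
\end{equation*}
with $B_{2,1}^{ij}$, $B_{2,2}^{ij}$ the functions listed in that proposition. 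Into these I would substitute the conformally flat symbol functions \eqref{symfunconformal}, in which every $P_\bullet^{ij}$ factors as $g^{ij}P_\bullet$, together with the conformally flat $T$-functions of Lemma~\ref{Tfunctionsconfomallyflatlemma}, which factor as $T_{\vec{n};\vec{\alpha}}=\sqrt{|g|}\,\underset{\vec{n}}{\sum \prod}g_{n_in_{\sigma(i)}}\,T_{\vec{\alpha}}$.

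The decisive simplification is purely combinatorial index bookkeeping. In each summand of $B_{2,1}^{ij}$ and $B_{2,2}^{ij}$, the $g^{ij}$-factors coming from the symbols contract against the $g_{n_in_{\sigma(i)}}$-factors produced by the Wick pairings $\underset{\vec{n}}{\sum \prod}$. Using $g^{ik}g_{k\ell}=\delta^i_\ell$ one checks that every term collapses to a scalar function times $g^{ij}$: the two-index terms give $g_{k\ell}g^{ik}g^{j\ell}=g^{ij}$, while a representative four-index term yields $(\dim+2)g^{ij}$ after summing the three pairings. Pulling out the common $\sqrt{|g|}$, the result takes the form $R=\sqrt{|g|}\big(K_{\dim}(h_{(0)},h_{(1)})(\lap(h))+H_{\dim}(h_{(0)},h_{(1)},h_{(2)})(\Box(h))\big)$, and $K_{\dim}$, $H_{\dim}$ are read off as the scalar coefficients of $\lap(h)$ and $\Box(h)$, now expressed through the scalar $T$-functions $T_{\alpha,1}$, $T_{\alpha_0,\alpha_1,1}$ and the divided differences of the powers $f^{\dim/4}$, $f^{1-\dim/4}$, $f^{1-\dim/2}$ appearing in \eqref{symfunconformal}.

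Next I would extract the structure \eqref{HKfunctions}, which reduces the general $f$ to the model case $f(t)=t$. The point is that both $K_{\dim}$ and $H_{\dim}$ depend on $f$ only through the values $f(t_j)$ and through divided differences of functions $G\circ f$ with $G$ a power of $u$. Applying the composition (Fa\`a di Bruno) rules for divided differences,
\begin{equation*}
[t_0,t_1;G\circ f]=[f(t_0),f(t_1);G]\,[t_0,t_1;f],
\end{equation*}
\begin{equation*}
[t_0,t_1,t_2;G\circ f]=[f(t_0),f(t_1),f(t_2);G]\,[t_0,t_1;f]\,[t_1,t_2;f]+[f(t_0),f(t_2);G]\,[t_0,t_1,t_2;f],
\end{equation*}
which follow from Hermite--Genocchi \eqref{HermiteGenocchi}, each divided difference of $G\circ f$ splits off the explicit factors $[t_0,t_1;f]$, $[t_1,t_2;f]$, $[t_0,t_1,t_2;f]$ seen in \eqref{HKfunctions}, leaving behind exactly $K_{\dim}^t$, $H_{\dim}^t$ evaluated at $x_j=f(t_j)$. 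This is precisely the claimed relation between $K_{\dim}$, $H_{\dim}$ and $K_{\dim}^t$, $H_{\dim}^t$.

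Finally it remains to evaluate $K_{\dim}^t$ and $H_{\dim}^t$ in closed form, which I expect to be the main obstacle. Setting $f(t)=t$, the scalar $T$-functions become, by \eqref{Tfunctionsconfomallyflat}, explicit derivatives of divided differences of $u^{1-\dim/2}$, and the divided differences of the power functions are elementary; substituting these into the scalar coefficients from the previous step and carrying out the (lengthy) algebraic simplification produces the stated formulas. The term $B_{2,2}$ has many more summands and several $T_{\alpha_0,\alpha_1,1}$ functions, so $H_{\dim}^t$ is where the bulk of the computation and the risk of error lie; the relation \eqref{HKfunctions} together with the symmetry properties in Remark~\ref{basicpropTfunctions} provide useful internal consistency checks. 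The dimension-two formulas are then obtained as the limit $\dim\to 2$ of the general expressions, as explained after Lemma~\ref{Tfunctionsconfomallyflatlemma}.
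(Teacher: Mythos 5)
Your overall strategy coincides with the paper's for most of the proof: the paper likewise starts from \eqref{scalarcurvaturedef} and Proposition \ref{b2proposition}, substitutes the conformally flat data \eqref{symfunconformal} together with Lemma \ref{Tfunctionsconfomallyflatlemma}, collapses the indices onto $g^{ij}$ with exactly the Wick-pairing identities you cite, reduces to $f(t)=t$, and obtains the dimension-two formulas as the limit $\dim\to 2$. Where you genuinely diverge is in how the structure \eqref{HKfunctions} is established. The paper does not use a chain rule for divided differences at all: it observes that the metric $f(h)^{-1}g_{ij}$ is literally the same functional metric as $\tilde{h}^{-1}g_{ij}$ with $\tilde{h}=f(h)$, so the curvature is given by the $f(t)=t$ formulas applied to $\tilde h$, namely $\sqrt{|g|}\big(K^t_{\dim}(\tilde h_{(0)},\tilde h_{(1)})(\lap(\tilde h))+H^t_{\dim}(\tilde h_{(0)},\tilde h_{(1)},\tilde h_{(2)})(\Box(\tilde h))\big)$; it then converts $\lap(\tilde h)$ and $\Box(\tilde h)$ back to $h$ via Theorem \ref{derivationoffunctionofh} and Corollary \ref{secondderivationoffunctionofh}, and \eqref{HKfunctions} drops out. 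In particular the term $2K^t_{\dim}(f(t_0),f(t_2))\big[t_0,t_1,t_2;f\big]$ is produced automatically by the second-order divided difference in Corollary \ref{secondderivationoffunctionofh} applied inside $\lap(\tilde h)$.

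This is also where your argument has a real gap. After applying your (correct) composition rules to every term, the general-$f$ result has the form $H_{\dim}=\phi(f(t_0),f(t_1),f(t_2))[t_0,t_1;f][t_1,t_2;f]+\psi(f(t_0),f(t_2))[t_0,t_1,t_2;f]$ with $\phi,\psi$ universal; specializing $f(t)=t$ identifies $\kappa=K^t_{\dim}$ and $\phi=H^t_{\dim}$, but it says nothing about $\psi$, because $[t_0,t_1,t_2;f]$ vanishes identically for $f(t)=t$. The claim that the splitting leaves behind ``exactly'' $K^t_{\dim}$, $H^t_{\dim}$ therefore does not justify $\psi=2K^t_{\dim}$, which is precisely the nontrivial part of \eqref{HKfunctions}. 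To close this within your framework you would have to track the pairing between terms: every summand of $B_{2,1}^{ij}$ containing a first-order divided difference has partner summands in $B_{2,2}^{ij}$ carrying the corresponding second-order divided difference, the same prefactors with the $T$-function evaluated at the outer variables $(t_0,t_2)$, and an overall factor $2$ (explicit in $P_{0,2}$ versus $P_{0,1}$, and in the last two terms of $B_{2,2}^{ij}$ versus the last two of $B_{2,1}^{ij}$; for the $P_1$-terms the factor $2$ arises because the two partial divided differences $[t_0,t_1;P_1^{j\ell}(\cdot,t_2)]$ and $[t_1,t_2;P_1^{i\ell}(t_0,\cdot)]$ give equal scalar contributions after contraction). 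Alternatively, and much more cheaply, you can import the paper's reparametrization observation, which makes the whole of \eqref{HKfunctions} a formal consequence of Theorem \ref{derivationoffunctionofh} and Corollary \ref{secondderivationoffunctionofh} and removes any need for term-by-term bookkeeping.
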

\begin{proof}
A closer look at the formulas of  Proposition \ref{b2proposition} reveals that for conformally flat metrics, the dependence of $B_{2,1}^{ij}$ and $B_{2,2}^{ij}$ on the indices is simply through a factor of $g^{ij}$.
In other words, there exist  functions $K_{\dim}(t_0,t_1)$ and $H_{\dim}(t_0,t_1,t_2)$ such that  
\begin{equation*}
\begin{aligned}
B_{2,1}^{ij}(h_{(0)},h_{(1)})(\delta_i\delta_j(h))
&=\sqrt{|g|}g^{ij} K_{\dim}(h_{(0)},h_{(1)})(\delta_i\delta_j(h))\\
&= \sqrt{|g|}K_{\dim}(h_{(0)},h_{(1)})(g^{ij}\delta_i\delta_j(h)),\\
B_{2,2}^{ij}(h_{(0)},h_{(1)},h_{(2)})(\delta_i(h)\cdot\delta_j(h))
&=\sqrt{|g|} H_{\dim}(h_{(0)},h_{(1)},h_{(2)})(\delta_i(h)\cdot\delta_j(h))\\
&= \sqrt{|g|} K_{\dim}(h_{(0)},h_{(1)},h_{(2)})(g^{ij}\delta_i(h)\cdot\delta_j(h)).
\end{aligned}
\end{equation*}
To reach this result, we applied identities  such as
\begin{align*}
&\underset{(k,\ell)}{\sum \prod} g_{n_in_{\sigma(i)}}g^{k\ell}g^{ij}=\dim g^{ij},
&&\underset{(k,\ell,m,n)}{\sum \prod} g_{n_in_{\sigma(i)}}g^{ij}g^{\ell m}g^{kn}=(\dim^2+2\dim)g^{ij},\\
&\underset{(k,\ell,m,n)}{\sum \prod} g_{n_in_{\sigma(i)}}g^{ik}g^{j\ell}g^{mn}=(\dim+2)g^{ij},
&&\underset{(k,\ell,m,n,p,q)}{\sum \prod} g_{n_in_{\sigma(i)}}g^{ik}g^{jn}g^{\ell m}g^{pq}=(\dim^2+6\dim+8)g^{ij}.
\end{align*}
To continue it is enough to take $f(t)=t$ and we denote the functions found for $f(t)=t$ by $K_{\dim}^t$ and $H_{\dim}^t$. 
Note that for $f(t)=t$ we have $[t_0,t_1;P^{ij}_2]=1$ and $[t_0,t_1,t_2;P^{ij}_2]=0$ which simplify the formula.
Then for a general function $f$, we simply take $\tilde{h}=f(h)$ and then the result is given as 
$$K_{\dim}^t(\tilde{h}_{0},\tilde{h}_{1})(g^{ij}\delta_i\delta_j(\tilde{h}))+H_{\dim}^t(\tilde{h}_{0},\tilde{h}_{1},\tilde{h}_{2})(g^{ij}\delta_i(\tilde{h})\delta_j(\tilde{h})).$$
By Theorem \ref{derivationoffunctionofh} and equation \eqref{secondderivationoffunctionofh}, we can turn everything in terms of original element $h$ and get the formulas given in \eqref{HKfunctions}.

To find the functions $K^t_{2}$ and $H^t_{2}$, as mentioned before,  we should evaluate the limit of $K^t_{\dim}$ and $H^t_{\dim}$ as $\dim\to 2$.
\end{proof}
Note that the Function $K_\dim^t(x,y)$ is the symmetric part of the function
\begin{equation*}
\frac{8 x^{2-\frac{ \dim }{4}} y^{2-\frac{3 \dim }{4}}}{\dim(\dim -2)   (x-y)^3} \left((\dim -1)  y^{\frac{\dim }{2}-1}-x^{\frac{\dim}2 -1}\right).
\end{equation*}
Similarly, $H_\dim^t(x,y,z)$ is equal to $(F_d(x,y,z)+F_d(z,y,x))/2$ where
\begin{align*}
F_\dim(x,y,z)=
&\frac{4 x^{-\frac{3 \dim }{4}} y^{-\dim } z^{-\frac{3 \dim }{4}}}{\dim(\dim -2)   (x-y)^2 (x-z)^3 (y-z)^2}\times
\\
 &\Big(
 x^{\dim } y^{\dim } z^2 (x-y) (3 x^2 y-2 x^2 z-4 x y^2+4 x y z-2 x z^2+y z^2)
\\
 &\, \, +x^{\dim } y^{\frac{\dim }{2}+1} z^{\frac{\dim }{2}+1} (x-z)^2 (z-y) (\dim  x+(1-\dim ) y)
 +\frac{1}{2}  x^{\dim } y^3 z^{\dim } (z-x)^3
\\
  &\,\, +x^{\frac{\dim }{2}+1} y^{\frac{3 \dim }{2}} z^2 (x-y) (x-z)^2
 +2 (\dim -1) x^{\frac{\dim }{2}+1} y^{\dim } z^{\frac{\dim }{2}+1} (x-y)^2 (x-z) (z-y)
\Big).
\end{align*}

In low dimensions, the curvature of the conformally flat metrics was studied in \cite{Fathizadeh-Khalkhali2013,Connes-Moscovici2014,Khalkhali-Motadelro-Sadeghi2016,Fathizadeh-Khalkhali2015,Dong-Ghorbanpour-Khalkhali2018}.
Here we show that  those results can be recovered from our general formula.
We should first note that the functions found in all the aforementioned works are written in terms of $[h,\cdot]$, denoted by $\Delta$, rather than $h$ itself.
To produce those functions from our result, we are only required  to  apply a linear  substitution on the variables $t_j$ in terms of new variables $s_j$ (cf. \cite{Lesch2017}).
On the other hand, it is important to  note that the functions $K_d^t(x,y)$ and $H_d^t(x,y,z)$ are homogeneous rational functions  of order $-\frac{d}2$ and  $-\frac{\dim}2-1$ respectively.
Using formula \eqref{HKfunctions}, it is clear that the functions $K_d(t_0,t_1)$ and $H_d(t_0,t_1,t_2)$ are homogeneous of order $1-\frac{\dim}2$ in $f(t_j)$'s.
This is the reason that for function $f(t)=e^t$  and a linear substitution such as  $t_j=\sum_{m=0}^js_m$, a factor of some power of $e^{s_0}$ comes out.
This term can be replaced by a power of $e^h$ multiplied from the left to the final outcome.
This explains how the functions in the aforementioned papers have one less variable than our functions.
In other words, we have
\begin{equation*}
K_{\dim}(s_0,s_0+s_1)=e^{(1-\frac{d}{2})s_0}K_{\dim}(s_1),\qquad 
H_{\dim}(s_0,s_0+s_1,s_0+s_1+s_2)=e^{(1-\frac{d}{2})s_0}H_{\dim}(s_1,s_2). 
\end{equation*} 
For instance, function $K_d(s)$ is given by
\begin{equation*}
K_{\dim}(s_1)=\frac{8 e^{\frac{\dim+2}{4} s_1} \left((\dim-1) \sinh \left(\frac{s_1}{2}\right)+\sinh \left(\frac{(1-\dim)s_1}{2}\right)\right)}{\dim(\dim-2) d \left(e^{s_1}-1\right)^2 s_1}.
\end{equation*}

Now, we can obtain functions in  dimension two:
\begin{equation*}
\begin{aligned}
H_2(s_1)=&-\frac{e^{\frac{s_1}{2}} \left(e^{s_1} \left(s_1-2\right)+s_1+2\right)}{\left(e^{s_1}-1\right){}^2 s_1},\\
K_2(s_1,s_2)=&\Big(s_1(s_1\hspace{-0.5mm}+\hspace{-0.5mm}s_2)\cosh(s_2)\hspace{-0.5mm}-(s_1\hspace{-0.5mm}-\hspace{-0.5mm}s_2)\left(s_1+s_2+\sinh(s_1)+\sinh(s_2)-\sinh(s_1+s_2)\right)\\
&
\quad -s_2(s_1+s_2) \cosh(s_1)\Big){\rm csch}(\frac{s_1}{2}) {\rm csch}(\frac{s_2}{2}){\rm csch}^2(\frac{s_1+s_2}{2})/(4 s_1 s_2 (s_1+s_2)).
\end{aligned}
\end{equation*}
we have $-4H_2=H$ and $-2K_2=K$ where $K$ and $H$ are the functions found in \cite{Connes-Moscovici2014,Fathizadeh-Khalkhali2013}. 
The difference is coming from the fact that the noncommutative parts of the results in \cite[Section 5.1]{Fathizadeh-Khalkhali2013} are $\lap(\log(e^{h/2}))=\frac12\lap(h)$ and $\Box(\log(e^{h/2}))=\frac14\Box(h)$.

 The functions for dimension four, with the same conformal factor $f(t)=e^t$  and substitution $t_j=\sum_{m=0}^js_m$, gives the following  which up to a negative sign are in complete agreement with the results from \cite{Fathizadeh-Khalkhali2015}:
 \begin{equation*}
\begin{aligned}
H_4(s_1)=\frac{1-e^{s_1}}{2e^{s_1} s_1},\quad 
K_4(s_1,s_2)=\frac{\left(e^{s_1}-1\right) \left(3 e^{s_2}+1\right) s_2-\left(e^{s_1}+3\right) \left(e^{s_2}-1\right) s_1}{4 e^{s_1+s_2}s_1 s_2 \left(s_1+s_2\right)}.
\end{aligned}
\end{equation*}

To recover the functions for curvature of a noncommutative three torus equipped with a conformally flat metric \cite{Khalkhali-Motadelro-Sadeghi2016,Dong-Ghorbanpour-Khalkhali2018}, we need to set  $f(t)=e^{2t}$ and $t_0=s_0,\, t_1=s_0 + s_1/3$ and $t_2=s_0 + (s_1+s_2)/3$. 
Then up to a factor of $e^{-s_0}$, we have
 \begin{equation*}
\begin{aligned}
H_3(s_1)=\frac{4-4e^{\frac{s_1}{3}}}{e^{\frac{s_1}{6}}(s_1 e^{\frac{s_1}{3}}+1) },\quad 
K_3(s_1,s_2)=\frac{6(e^{\frac{s_1}{3}}-1) (3 e^{\frac{s_2}{3}}+1) s_2-6 (e^{\frac{s_1}{3}}+3)(e^{\frac{s_2}{3}}-1) s_1}{ e^{\frac{s_1+s_2}6}(e^{\frac{s_1+s_2}{3}}+1) s_1 s_2 (s_1+s_2)}.
\end{aligned}
\end{equation*}

Finally, we would like to  check the classical limit of our results as $\theta\to 0$.
In the commutative case,  the scalar curvature of a conformally flat metric   $\tilde{g}=e^{2h}g$ on a $\dim$-dimensional space reads 
\begin{equation*} 
 {\tilde {R}}=-2(\dim-1)e^{-2h}g^{jk}\partial_j \partial_k(h) -(\dim-2)(\dim-1)e^{-2h} g^{jk}\partial_j(h)\partial_k(h).
 \end{equation*}
If we let $f(t)=e^{-2t}$, we have the limit
\begin{equation*}
\begin{aligned}
\lim_{t_0,t_1\to t} K_{\dim}(t_0,t_1)&=\frac{1}{3} (d-1) e^{(\dim-2 )t},\quad \lim_{t_0,t_1,t_2\to t}H_{\dim} (t_0,t_1,t_2)&=\frac{1}{6} (\dim-2) (\dim-1) e^{ (\dim-2) t}.
\end{aligned}
\end{equation*}
We should also add that since $\delta_j\to -i \partial_j$ as $\theta\to 0$, we have $\lap(h)\to -g^{jk}\partial_j \partial_k(h) $ and $\Box(h)\to -g^{jk}\partial_j(h)\partial_k(h)$.
Therefore, our result recovers the classical result up to the factor of $\sqrt{|g|}e^{\dim h}/6 $.
The factor $\sqrt{|g|}e^{\dim h}$ represents the volume form in the scalar curvature density and the factor $1/6$ is due to our choice of normalization in \eqref{scalarcurvaturedef}.

\begin{remark}
The rationality of the heat trace densities in different situations \cite{Fathizadeh-Ghorbanpour-Khalkhali2014,Fathizadeh-Marcolli2017} has been studied. 
Here, we have a rationality of the second density for the conformally flat metric on a noncommutative tori and it can be expected to be true in the higher terms of the heat trace densities.
On the other hand, finding relations between the functions $K_d$ and $H_d$ in different dimensions can shed more light on  the structure of these functions.
\end{remark}

\subsection{Twisted product functional metrics}\label{twistedmetricsec}
In this subsection, we shall compute the scalar  curvature density of a noncommutative $\dim$-torus equipped with a class of functional metrics, which  
following the standard convention in the differental geomtry will be called a twisted product metric (see \cite{Kazan-Sahin2013} and references therein).
\begin{definition}
Let $g$ be an $r\times r$ and $\tilde{g}$ be a $(\dim-r)\times (\dim-r)$ positive definite real symmetric  matrices and assume $f$ is a positive function on the real line. 
We call the functional metric 
\begin{equation}\label{twistedmetric}
G=f(t)^{-1}g\oplus \tilde{g},
\end{equation}
the {\it twisted product functional metric} with the twisting element $f(h)^{-1}$.
\end{definition}
Some examples of the twisted product metrics on noncommutative tori were already studied.
The asymmetric two torus whose Dirac oeprator and spectral invariants are studied  in \cite{Dbrowski-Sitarz2015} is a twisted product metric for $r=1$.
The scalar and Ricci curvature of noncommutative three torus of twisted product metrics with $r=2$ are studied in \cite{Dong-Ghorbanpour-Khalkhali2018}.
It is worth mentioning  that conformally flat metrics as well as warped  metrics  are two special cases of twisted product functional metrics.

Let us find the terms to be substituted in the formulas from Proposition \ref{b2proposition}, for the twisted product functional metric \eqref{twistedmetric}. 
First, we note that there are functions  $P_{\bullet}$ and $\tilde{P}_{\bullet}$ such that the functions $P_{\bullet}^{ij}$ of functional parts \eqref{noncommutativelaplacetypesym} of the symbol of $\lap_{0,G}$ can be written as 
\begin{equation}
P^{ij}_{\bullet}=\begin{cases}
g^{ij}P_{\bullet} & 1\leq i,j\leq r \\
\tilde{g}^{ij}\tilde{P}_{\bullet} & r< i,j\leq \dim \\
0 & otherwise.
\end{cases}
\end{equation}
The functions $P_{\bullet}$ are exactly the functions in \eqref{symfunconformal} obtained  for the conformally flat metrics in dimension $r$.
However, the other functions $\tilde{P}_{\bullet}$ are given by
\begin{equation*}
\begin{aligned}
&\tilde{P}_2(t_0)= 1,\\
&\tilde{P}_1(t_0,t_1)
=f(t_0)^{\frac{r}4}\big[t_0,t_1;f^{-\frac{r}4}\big]+f(t_0)^{-\frac{r}4}\big[t_0,t_1;f^{\frac{r}4}\big],\\
&\tilde{P}_{0,1}(t_0,t_1)=f(t_0)^{-\frac{r}4}\big[t_0,t_1;f^{\frac{r}4}\big],\\
&\tilde{P}_{0,2}(t_0,t_1,t_2)= 
f(t_0)^{\frac{r}4}\big[t_0,t_1;f^{-\frac{r}2}\big]\big[t_1,t_2;f^{\frac{r}4}\big]+2f(t_0)^{-\frac{r}4}\big[t_0,t_1,t_2;f^{\frac{r}4}\big].
\end{aligned}
\end{equation*}

On the other hand, while evaluating $T$-functions for metric \eqref{twistedmetric}, we first find that 
\begin{equation*}
\begin{aligned}
P^{-1}(\vec{s})_{k\ell} &=\begin{cases} 
g_{k\ell}\Big(\sum_{j=0}^{n} s_j f(t_j)\Big)^{-1} & 1\leq i,j\leq r \\
\tilde{g}_{k\ell} & r< i,j\leq \dim
\end{cases}\\ 
\sqrt{\det P(\vec{s})}&=\frac1{\sqrt{|g||\tilde{g}|}}\Big(\sum_{j=0}^{n} s_j f(t_j)\Big)^{\frac{r}2}.
\end{aligned}
\end{equation*}
This implies that when exactly $k$ of the entries of $\vec{n}$ are less than equal to $r$, then we have a function $T_{\vec{\alpha}}^k$ such that 
$T_{\vec{n},\vec{\alpha}}=\sqrt{|g||\tilde{g}|} \underset{\vec{n}}{\sum \prod}g_{n_in_{\sigma(i)}} T^k_{\vec{\alpha}}$. 
The function $T^k_{\vec{\alpha}}$ is given by
$$T^k_{\vec{\alpha}}(t_0,\cdots ,t_n)=\frac{1}{2^{|\vec{\alpha}|-2}\vec{\beta}!}\int_{\Sigma_n} \prod s_j^{\alpha_j -1} \Big(\sum_{j=0}^{n} s_j f(t_j)\Big)^{-\frac{r+k}2}.$$
Similar to the conformal case,  the function $T^k_{\vec{\alpha}}$ can be computed as derivatives of the divided difference of explicit functions.
\begin{lemma}\label{Talphaklemma}
Let $\vec{\alpha}$ and $\vec{n}$ be  multi-indices such that exactly $k$ entries of $\vec{n}$ are less than equal to $r$.
Then, for the twisted product metric \eqref{twistedmetric} with $f(t)=t$, the $T$-function $T_{\vec{n},\vec{\alpha}}$can be written as 
$$T_{\vec{n},\vec{\alpha}}(t_0,\cdots ,t_n)=\sqrt{|g||\tilde{g}|} \underset{\vec{n}}{\sum \prod}g_{n_in_{\sigma(i)}} T^k_{\vec{\alpha}}(t_0,\cdots ,t_n).$$ 
where, 
\begin{equation}\label{Talphak}
T^k_{\vec{\alpha}}(t_0,\cdots ,t_n)
=\begin{cases}
\frac{2^{2-|\vec{\alpha}|}}{\Gamma(\vec{\beta}){\prod_{j=1}^{|\vec{\alpha}|-1}} (j-\frac{r+k}2)} \partial_t^{\vec{\beta}} \big[t_0,\cdots,t_n;u^{-\frac{r+k}2+|\vec{\alpha}|-1}\big] &|\vec{\alpha}|\neq  \frac{r+k}{2}+1,\\ &\\
\frac{(-2)^{2-|\vec{\alpha}|}}{\Gamma(\vec{\beta})\Gamma(\frac{r+k}2)}\partial_t^{\vec{\beta}}\big[t_0,\cdots,t_n;\log(u)\big] & |\vec{\alpha}|= \frac{r+k}{2}+1. 
\end{cases}
\end{equation}
\qed
\end{lemma}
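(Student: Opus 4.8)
The plan is to follow the template of Lemma~\ref{Tfunctionsconfomallyflatlemma}; the only genuinely new task is to evaluate the single simplex integral defining $T^k_{\vec\alpha}$. As recorded just before the statement, the block-diagonal form of $P^{-1}(\vec s)$ and of $\sqrt{\det P(\vec s)}$ already factors $T_{\vec n,\vec\alpha}$ as $\sqrt{|g||\tilde g|}\,\underset{\vec n}{\sum \prod}g_{n_in_{\sigma(i)}}\,T^k_{\vec\alpha}$, and with $f(t)=t$ substituted into Lemma~\ref{integralformofTna} the remaining factor is
\begin{equation*}
T^k_{\vec\alpha}(t_0,\cdots,t_n)=\frac1{2^{|\vec\alpha|-2}\vec\beta!}\int_{\Sigma_n}\prod_{j=0}^n s_j^{\alpha_j-1}\Big(\sum_{l=0}^n s_lt_l\Big)^{-\frac{r+k}2}\,ds,\qquad \vec\beta=(\alpha_0-1,\cdots,\alpha_n-1).
\end{equation*}
So everything rests on computing this integral.

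First I would use the elementary identity $\partial_{t_j}(\sum_l s_lt_l)^{-q}=-q\,s_j(\sum_l s_lt_l)^{-q-1}$, which upon iteration gives
\begin{equation*}
\partial_t^{\vec\beta}\Big(\sum_l s_lt_l\Big)^{-q}=(-1)^{|\vec\beta|}\frac{\Gamma(q+|\vec\beta|)}{\Gamma(q)}\Big(\prod_j s_j^{\beta_j}\Big)\Big(\sum_l s_lt_l\Big)^{-q-|\vec\beta|}.
\end{equation*}
Setting $q:=\tfrac{r+k}2-|\vec\beta|$ forces $q+|\vec\beta|=\tfrac{r+k}2$, so the integrand equals a scalar multiple of $\partial_t^{\vec\beta}(\sum_l s_lt_l)^{-q}$; since $\Sigma_n$ is compact and $\sum_l s_lt_l>0$ on the relevant range of the $t_j$, differentiation under the integral sign is legitimate and $\partial_t^{\vec\beta}$ can be pulled outside, leaving the single integral $\int_{\Sigma_n}(\sum_l s_lt_l)^{-q}\,ds$.

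This integral I would evaluate with the Hermite--Genocchi formula~\eqref{HermiteGenocchi}: it equals $[t_0,\cdots,t_n;F]$ for any $F$ with $F^{(n)}(u)=u^{-q}$. An $n$-fold antiderivative of $u^{-q}$ is a constant multiple of $u^{n-q}$, namely $\tfrac{\Gamma(1-q)}{\Gamma(n+1-q)}u^{n-q}$, \emph{unless} one of the $n$ integrations produces $u^{-1}$. Tracking the exponents $-q,-q+1,\dots,-q+n-1$ shows the relevant degeneration is at the last step, giving a logarithm precisely when $q=n$, i.e.\ when $n-q=|\vec\alpha|-1-\tfrac{r+k}2=0$, equivalently $|\vec\alpha|=\tfrac{r+k}2+1$. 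This is exactly the dichotomy in the statement: the power case yields $[t_0,\cdots,t_n;u^{-\frac{r+k}2+|\vec\alpha|-1}]$ and the logarithmic case yields $[t_0,\cdots,t_n;\log u]$.

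It then remains to reconcile the constants. Writing $p:=\tfrac{r+k}2$ and using $q=p-|\vec\beta|$, $n+1-q=|\vec\alpha|-p$ and $|\vec\beta|=|\vec\alpha|-n-1$, the prefactor of the power case is $\tfrac{(-1)^{|\vec\beta|}\Gamma(q)\Gamma(1-q)}{\Gamma(p)\Gamma(|\vec\alpha|-p)}$; inserting $\Gamma(1-p)$ and applying the reflection formula $\Gamma(z)\Gamma(1-z)=\pi/\sin\pi z$ together with the integrality of $|\vec\beta|$ collapses $(-1)^{|\vec\beta|}\Gamma(q)\Gamma(1-q)/\big(\Gamma(p)\Gamma(1-p)\big)$ to $1$, leaving $\Gamma(1-p)/\Gamma(|\vec\alpha|-p)=\big(\prod_{j=1}^{|\vec\alpha|-1}(j-p)\big)^{-1}$ and hence the stated $\tfrac{2^{2-|\vec\alpha|}}{\vec\beta!\,\prod_{j=1}^{|\vec\alpha|-1}(j-\frac{r+k}2)}$ (reading $\Gamma(\vec\beta)$ as $\vec\beta!$); in the logarithmic case $\Gamma(q)=\Gamma(n)=(n-1)!$ cancels the factorial from the final antiderivative and the signs collect to $(-1)^{|\vec\alpha|}$, matching $(-2)^{2-|\vec\alpha|}$. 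I expect this Gamma-function and sign bookkeeping to be the only delicate part. One caveat I would flag: when $\tfrac{r+k}2$ is a positive integer at most $|\vec\alpha|-2$ (possible only for even $r$), an intermediate integration also produces a logarithm, so that neither listed case applies and a separate evaluation is required; this is precisely the degeneration that forces the hypothesis $r\neq2,4$ in Theorem~\ref{scalartwisted}.
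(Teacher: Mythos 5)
Your proof is correct and takes essentially the approach the paper intends: the paper states this lemma with no proof beyond the factorization displayed just before it, deferring implicitly to the proof of the conformally flat case (Lemma \ref{Tfunctionsconfomallyflatlemma}), and your argument---writing $\prod_j s_j^{\beta_j}\big(\sum_l s_l t_l\big)^{-\frac{r+k}{2}}$ as a constant multiple of $\partial_t^{\vec{\beta}}\big(\sum_l s_l t_l\big)^{-q}$, evaluating the remaining simplex integral by Hermite--Genocchi \eqref{HermiteGenocchi}, and then carrying out the Gamma and sign bookkeeping (where, unlike the conformal case, the reflection formula is genuinely needed since $q$ may be negative, and the $n$-fold antiderivative can degenerate to $\log u$)---is exactly that template, with both cases of \eqref{Talphak} coming out with the correct constants. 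Your closing caveat also agrees with the paper's own treatment: for integer $\tfrac{r+k}{2}\le |\vec{\alpha}|-2$ the displayed formula is indeterminate (in part of that range the issue is that the differentiation trick itself degenerates rather than an intermediate logarithm, but in either case a separate evaluation is required), and the paper indeed obtains the $r=2,4$ functions in Theorem \ref{scalartwisted} only as limits of the generic-$r$ formulas.
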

It can readily  be seen that for $k=2|\vec{\alpha}|-4$ the function $T^{k}_{\vec{\alpha}}$ is equal to $T_{\vec{\alpha}}$ found in \eqref{Tfunctionsconfomallyflat} for the conformally flat case.

\begin{theorem}\label{scalartwisted}
The scalar curvature density of the $\dim$-dimensional noncommutative tori $\nctorus[\dim]$ equipped with the twisted product functional metric \eqref{twistedmetric} with the twisting element $f(h)^{-1}$   is given by
\begin{equation*}
\begin{aligned}
R=\sqrt{|g||\tilde{g}|} \Big(& K_{r}(h_{(0)},h_{(1)})(\lap(h))+H_{r}(h_{(0)},h_{(1)},h_{(2)})(\Box(h))\\
&+\tilde{K}_{r}(h_{(0)},h_{(1)})(\tilde{\lap}(h))+\tilde{H}_{r}(h_{(0)},h_{(1)},h_{(2)})(\tilde{\Box}(h))\Big),
\end{aligned}
\end{equation*} 
where $\tilde{\lap}(h)=\sum_{r<i,j}\tilde{g}^{ij}\delta_i\delta_j(h)$ and $\tilde{\Box}(h)=\sum_{r<i,j} \tilde{g}^{ij}\delta_i(h)\delta_j(h)$ and  $\lap$, $\Box$, $K_r$ and $H_r$ are  given by Theorem \ref{scalarcrvatureconformal}. 
The functions $\tilde{K}_{r}$ and $\tilde{H}_{r}$ for $r\neq 2, 4$ are  given by
$$\tilde{K}_{r}(t_0,t_1)=\tilde{K}_{r}^t(f(t_0),f(t_1))\big[t_0,t_1;f\big],$$
$$\tilde{H}_{r}(t_0,t_1,t_2)=\tilde{H}_{r}^t(f(t_0),f(t_1),f(t_2))\big[t_0,t_1;f\big]\big[t_1,t_2;f\big]+2\tilde{K}_{r}^t(f(t_0),f(t_2))\big[t_0,t_1,t_2;f\big].$$
The functions $\tilde{K}_{r}^t$ and $\tilde{H}_{r}^t$ are 
\begin{align*}
\tilde{K}_{r}^t(x,y)=\frac{(2r-4)(x^2-y^2) x^{\frac{r}2} y^{\frac{r}2}+4 x^2 y^r-4 x^r y^2 }{(r-4) (r-2)x^{\frac34 r} y^{\frac34 r} (x-y)^3},
\end{align*}
and
\begin{align*}
&\tilde{H}_{r}^t(x,y,z)=\frac{2 x^{-\frac{3 r}{4}} y^{-r} z^{-\frac{3 r}{4}}}{(r-4) (r-2) (x-y)^2 (x-z)^3 (y-z)^2}\times\\
&\Big(x^r y^r z^2 (x-y) \left(x^2+2 x (y-2 z)-4 y^2+6 y z-z^2\right)\\
&\,\, +x^r y^{\frac{r}2} z^{\frac{r}2} (x-z)^2 (y-z) \big((r-3) y z-x ((r-3) z+y)\big)\\
&\,\,- x^r y^2 z^r (x-z)^3+ x^{\frac{r}2} y^{\frac{3 r}{2}}z^2 (x-y) (x-z)^2\\
&\,\,-x^{\frac{r}2} y^r z^{\frac{r}2} (x\hspace{-0.55mm}-\hspace{-0.55mm}y) (x\hspace{-0.55mm}-\hspace{-0.55mm}z) (y\hspace{-0.55mm}-\hspace{-0.55mm}z) \big(\hspace{-0.75mm}(r\hspace{-0.55mm}-\hspace{-0.55mm}3) x^2\hspace{-0.55mm}-2 x ((r\hspace{-0.55mm}-\hspace{-0.55mm}2) y+(1\hspace{-0.55mm}-r)z)+z ((r\hspace{-0.55mm}-3) z\hspace{-0.55mm}-2 (r\hspace{-0.55mm}-2) y)\hspace{-0.75mm}\big)\\
&\,\,+x^{\frac{r}2} y^{\frac{r}2} z^r (y-x) (x-z)^2 (y z-(r-3) x (y-z))\\
&\,\,+x^2 y^{\frac{3 r}{2}} z^{\frac{r}2} (x-z)^2 (y-z)
-x^2 y^r z^r (y-z) \left(x^2-6 x y+4 x z+4 y^2-2 y z-z^2\right)\Big).
\end{align*}
\end{theorem}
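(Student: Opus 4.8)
The plan is to specialize the general formula for $b_2(P)$ of Proposition~\ref{b2proposition} to the Laplacian $\lap_{0,G}$ of the twisted product metric \eqref{twistedmetric}, using the scalar curvature definition $R=(4\pi)^{d/2}b_2(\lap_{0,G})$. First I would substitute the block-diagonal functional parts listed just before the theorem---namely $P_\bullet^{ij}=g^{ij}P_\bullet$ for $1\le i,j\le r$, $P_\bullet^{ij}=\tilde{g}^{ij}\tilde{P}_\bullet$ for $r<i,j\le\dim$, and $P_\bullet^{ij}=0$ otherwise---into the expressions for $B_{2,1}^{ij}$ and $B_{2,2}^{ij}$. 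Because the functional parts vanish on mixed index pairs, every internal index sum ($k,\ell,m,n,p,q$) splits into a first-block part ($\le r$) and a second-block part ($>r$), so each of the many summands in $B_{2,1}^{ij}$ and $B_{2,2}^{ij}$ collapses into a manageable sum over these two cases.

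Next I would exploit the fact that for second-block indices the principal symbol $P_2^{ij}=\tilde{g}^{ij}$ is constant in $t$, so divided differences such as $\big[t_0,t_1;P_2^{ij}\big]$ vanish whenever both relevant indices lie in the second block, whereas first-block divided differences $\big[t_0,t_1;g^{k\ell}f\big]=g^{k\ell}\big[t_0,t_1;f\big]$ survive. Separating the external index pair $(i,j)$ into the two blocks then produces two kinds of contribution. When $(i,j)$ lies in the first block, the computation reduces verbatim to the conformally flat computation in dimension $r$, so the coefficients are exactly the functions $K_r$ and $H_r$ of Theorem~\ref{scalarcrvatureconformal}; this accounts for the first line of the claimed formula, together with the tensors $\lap(h)$ and $\Box(h)$. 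When $(i,j)$ lies in the second block I would collect the surviving terms---those carrying a factor $\tilde{g}^{ij}$ but possibly internal contractions in the first block---and these define the new functions $\tilde{K}_r$ and $\tilde{H}_r$ multiplying $\tilde{\lap}(h)$ and $\tilde{\Box}(h)$.

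To evaluate $\tilde{K}_r$ and $\tilde{H}_r$ explicitly I would proceed as in the proof of Theorem~\ref{scalarcrvatureconformal}: carry out the combinatorial contractions $\underset{\vec{n}}{\sum \prod}$ against the block metric, tracking how many of the contracted indices fall in the first block, so that the $T$-functions appearing are precisely the $T^k_{\vec{\alpha}}$ of Lemma~\ref{Talphaklemma}. With $f(t)=t$ one may then substitute the closed forms \eqref{Talphak} for $T^k_{\vec{\alpha}}$ as derivatives of divided differences of $u^{-\frac{r+k}2+|\vec{\alpha}|-1}$, perform the divided-difference algebra, and read off the rational functions $\tilde{K}_r^t(x,y)$ and $\tilde{H}_r^t(x,y,z)$. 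Passing from $f(t)=t$ to a general conformal factor $f$ is done by setting $\tilde{h}=f(h)$ and applying Theorem~\ref{derivationoffunctionofh} together with Corollary~\ref{secondderivationoffunctionofh} to re-express $\lap(\tilde{h})$ and $\Box(\tilde{h})$ through $\lap(h)$ and $\Box(h)$, which produces the stated relations between $\tilde{K}_r,\tilde{H}_r$ and their $f(t)=t$ versions via the divided differences $\big[t_0,t_1;f\big]$ and $\big[t_0,t_1,t_2;f\big]$.

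The hard part will be the bookkeeping in the second-block case: there are more than a dozen summands in $B_{2,2}^{ij}$, and for each one I must determine which $T^k_{\vec{\alpha}}$ appears (equivalently, how many contracted indices land in the first block) and then combine the resulting divided-difference expressions into a single rational function. The restriction $r\neq 2,4$ enters precisely here, since the closed form \eqref{Talphak} bifurcates at $|\vec{\alpha}|=\frac{r+k}2+1$; the values $r=2$ and $r=4$ are exactly those for which some relevant $T^k_{\vec{\alpha}}$ hits the logarithmic branch and must be obtained as a limiting case rather than from the rational formula.
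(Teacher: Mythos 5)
Your proposal is correct and follows essentially the same route as the paper's proof: substitute the block-diagonal functional parts into Proposition~\ref{b2proposition}, split every term according to whether the external pair $(i,j)$ lies in the first or second block (the first-block part reducing to the dimension-$r$ conformally flat computation of Theorem~\ref{scalarcrvatureconformal}, the $\tilde{g}^{ij}$-part defining $\tilde{K}_r$ and $\tilde{H}_r$), evaluate via the $T^k_{\vec{\alpha}}$ of Lemma~\ref{Talphaklemma} for $f(t)=t$, and pass to general $f$ by the substitution $\tilde{h}=f(h)$. Your observations about the vanishing of second-block divided differences and about $r=2,4$ being exactly the logarithmic branch of \eqref{Talphak} are precisely the mechanisms the paper uses (the latter is stated right after the theorem), so the proposal fills in the bookkeeping that the paper compresses into ``the rest is just substitution.''
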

\begin{proof}
We first separate the terms in the formulas from Proposition \ref{b2proposition} in two parts: 
the part which contains no tilde term and the rest of the terms.
For example the formula for $B^{ij}_ {21}$ can be rewritten as 
\begin{equation*}
\begin{aligned}
&B^{ij}_ {21}(t_ 0, t_ 1)=\sqrt{| g||\tilde {g}|}\times \\
&\Big(g^{ij}\left(-T^0_{1, 1}(t_0,t_1)P_{0,1}(t_0,t_1)
+ T^2_{2,1}(t_0,t_1)t_0(2P_1(t_0,t_1)+r)  
-4(r+2)T^4_{3,1}(t_0,t_1) t_0^2\right)\\
&\quad+\tilde{g}^{ij}\big(-T^0_{1,1}(t_0,t_1)\tilde{P}_{0,1}(t_0,t_1) 
+2T^0_{2,1}(t_0,t_1)\tilde{P}_1(t_0,t_1)
+r T^2_{2,1}(t_0,t_1)
-4rT^2_{3, 1}(t_0,t_1)\big)\Big).
\end{aligned}
\end{equation*}
The second part, multiples of $\tilde{g}^{ij}$,  will  be denoted by $\tilde{K}_r^t$, while the first part, multiple of $g^{ij}$, is nothing but  $K_r^t(t_0,t_1)$; the function found for the conformally flat case in Theorem \ref{scalarcrvatureconformal}.
In other words, we have
\begin{equation*}
B_{2,1}^{ij}(h_{(0)},h_{(1)})(\delta_i\delta_j(h))=\sqrt{|g||\tilde{g}|}\Big(  K^t_{r}(h_{(0)},h_{(1)})(\lap(h))+\tilde{K}^t_{r}(h_{(0)},h_{(1)})(\tilde{\lap}(h))\Big),
\end{equation*}
where $\lap(h)=\sum_{i,j\leq r} g^{ij}\delta_i\delta_j(h)$ and $\tilde{\lap}(h)=\sum_{i,j> r}\tilde{g}^{ij}\delta_i\delta_j(h)$.
Similarly, this can be done for $B_{22}^{ij}$, i.e. there are  functions   $\tilde{H}_r^t$ and $H_r^t$ such that
\begin{equation*}
\begin{aligned}
&B_{2,2}^{ij}(h_{(0)},h_{(1)},h_{(2)})(\delta_i(h)\cdot\delta_j(h))=\\
&\sqrt{|g||\tilde{g}|}\Big(  H_{r}(h_{(0)},h_{(1)},h_{(2)})(\Box(h))+\tilde{H}_{r}(h_{(0)},h_{(1)},h_{(2)})(\tilde{\Box}(h))\Big).
\end{aligned}
\end{equation*}
Here we use the notation  $\Box(h)=\sum_{i,j\leq r} g^{ij}\delta_i(h)\cdot\delta_j(h)$ and  $\tilde{\Box}(h)=\sum_{i,j> r}\tilde{g}^{ij}\delta_i(h)\cdot\delta_j(h)$.
The rest is just substitution of  each term into the formula for $B_{21}^{ij}$ and $B_{22}^{ij}$.
\end{proof}
When the selfadjoint element $h\in \Ai[\dim]$ has the property that  $\delta_j(h)=0$ for $1\leq j\leq r$, 
we call the twisted product functional metric \eqref{twistedmetric} a {\it warped functional metric} with the warping element $1/f(h)$. 
\begin{corollary}
The scalar curvature density of a warped product of $\tilde{g}$ and $g$ with the warping element $1/f(h)$  is given by
\begin{equation*}
R=\sqrt{|g||\tilde{g}|} \Big(\tilde{K}_{r}(h_{(0)},h_{(1)})(\tilde{\lap}(h))+\tilde{H}_{r}(h_{(0)},h_{(1)},h_{(2)})(\tilde{\Box}(h))\Big).
\end{equation*} 
\end{corollary}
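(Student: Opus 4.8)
The plan is to specialize the general formula of Theorem \ref{scalartwisted} to the warped case, where the defining hypothesis is that $\delta_j(h)=0$ for all $1\leq j\leq r$. Recall that for a twisted product functional metric the scalar curvature density splits into an \emph{untilded} contribution, assembled from $\lap(h)=\sum_{i,j\leq r} g^{ij}\delta_i\delta_j(h)$ and $\Box(h)=\sum_{i,j\leq r} g^{ij}\delta_i(h)\cdot\delta_j(h)$, together with a \emph{tilded} contribution built from $\tilde{\lap}(h)=\sum_{r<i,j}\tilde{g}^{ij}\delta_i\delta_j(h)$ and $\tilde{\Box}(h)=\sum_{r<i,j}\tilde{g}^{ij}\delta_i(h)\cdot\delta_j(h)$. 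I would start from this expression verbatim and simply insert the warping hypothesis.

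The key --- and essentially only --- step is to observe that this hypothesis annihilates precisely the two untilded noncommutative parts while leaving the tilded ones untouched. Indeed, every summand of $\Box(h)$ carries a factor $\delta_i(h)$ with $i\leq r$, which vanishes by assumption; and since each $\delta_i$ is a derivation, $\delta_i\delta_j(h)=\delta_i\big(\delta_j(h)\big)=\delta_i(0)=0$ for every $j\leq r$, so $\lap(h)=0$ as well. Because the contraction is linear in its noncommutative argument (as used throughout, e.g.\ in Corollary \ref{secondderivationoffunctionofh}), the terms $K_r(h_{(0)},h_{(1)})(\lap(h))$ and $H_r(h_{(0)},h_{(1)},h_{(2)})(\Box(h))$ vanish identically, independently of the operator parts $K_r$ and $H_r$. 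By contrast, $\tilde{\lap}(h)$ and $\tilde{\Box}(h)$ only involve derivations $\delta_i$ with $i>r$, which the hypothesis does not constrain, so the tilded terms survive unchanged, yielding exactly the asserted formula.

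There is no genuine obstacle here: the statement is a direct reading of Theorem \ref{scalartwisted} once one notes that $\lap$ and $\Box$ feel only the warped directions $1\leq j\leq r$. The entire argument is the two vanishing observations above, and I would present it in a few lines rather than reproving any part of Theorem \ref{scalartwisted}.
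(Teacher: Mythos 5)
Your proposal is correct and is essentially the paper's own proof: the paper also derives the corollary directly from Theorem \ref{scalartwisted} by noting that $\lap(h)$ and $\Box(h)$ vanish for a warped metric, which is exactly your two vanishing observations (the paper merely states this in one line, while you spell out why $\delta_j(h)=0$ for $j\leq r$ kills both the first-order products and, via $\delta_i\delta_j(h)=\delta_i(0)=0$, the second-order terms).
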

\begin{proof}
It is enough to see that  $\lap(h)$ and $\Box(h)$ vanish for the warped metric.
\end{proof}

For $r=2$ and $r=4$, functions $\tilde{H}_r$ and $\tilde{K}_r$ are the limit of the functions given in Theorem \ref{scalartwisted} as $r$ approache $2$ or $4$. 
This is because of the fact that for these values of $r$, some of $T_{\vec{\alpha}}^k$ functions are the  limit case of formulas found in \eqref{Talphak}. 
The functions for $r=2$ are given by
\begin{equation*}
\tilde{K}_2(x,y)=\frac{-x^2+y^2+2 x y \log (\frac{x}{y})}{\sqrt{xy} (x-y)^3},
\end{equation*}
and  
\begin{equation*}
\begin{aligned}
\tilde{H}_2(x,y,z)=&\frac{1}{2 y\sqrt{xz}  (x-y)^2 (x-z)^3 (y-z)^2}\Big(-y (x+y) (x-z)^3 (y+z) \log (y)\\
&-z (x-y)^2  \left(-3 x^2 y+x^2 z-8 x y^2+10 x y z-2 x z^2+y z^2+z^3\right)\log (z)\\
&+x  (y-z)^2 \left(x^3+x^2 y-2 x^2 z+10 x y z+x z^2-8 y^2 z-3 y z^2\right) \log (x)\\
&+ 2 y (y-x) (x-z) (x+z) (z-y) (x-2 y+z)\Big).
\end{aligned}
\end{equation*}
For $r=4$, on the other hand, we have
\begin{equation*}
\tilde{K}_4(x,y)=\frac{x^2-y^2-(x^2+y^2) \log (\frac{x}{y})}{x y (x-y)^3},
\end{equation*}  
and
\begin{align*}
&\tilde{H}_4(x,y,z)=\frac1{2 x (x - y)^2 y^2 (x - z)^3 (y - z)^2 z}\times\\
& \Big(\left(x^2+y^2\right) (x-z)^3 \left(y^2+z^2\right) \log (y)\\
&\quad\,\, +\log (x) (y-z)^2 \left(x^4 y+x^4 z-6 x^3 y^2-2 x^3 y z-2 x^3 z^2+3 x^2 y^3+x^2 y^2 z+x^2 y z^2\right.\\
&\quad\,\, \qquad \qquad\qquad\qquad\left.+x^2 z^3+2 x y^3 z -4 x y^2 z^2+3 y^3 z^2+y^2 z^3\right)\\
&\quad\,\, -\log (z) (x-y)^2\left(x^3 y^2+x^3 z^2+3 x^2 y^3-4 x^2 y^2 z+x^2 y z^2-2 x^2 z^3+2 x y^3 z+x y^2 z^2\right.\\
&\quad\,\, \qquad \qquad\qquad\qquad\left.-2 x y z^3+x z^4+3 y^3 z^2-6 y^2 z^3+y z^4\right)\\
&\quad\,\, -2 (x-y) (x-z) (y-z) \left(x^3 z+x^2 y^2-2 x^2 z^2-2 x y^3+2 x y^2 z+x z^3-2 y^3 z+y^2 z^2\right)\Big).
\end{align*}

In \cite[section 4.1]{Dong-Ghorbanpour-Khalkhali2018}, the scalar curvature density of twisted product functional metric on noncommutative three torus for $f(t)=e^{2t}$ and $r=2$ is found. 
This result can be recovered from our formulas given in Theorem \ref{scalartwisted} by setting $t_0=s_0$, $t_1=s_0+s_1/2$ and $t_2=s_0+s_1/2+s_2/2$.

We would like to conclude this section by a remark on multiply twisted metrics and commuting families of metrics.
Let $[g^{ij}(t)]$ be a commuting family of positive definite real symmetric matrices.
Such a family diagonalizes simultaneously; i.e. there exist an orthogonal matrix $O$ such that 
$$[g^{ij}(t)]=O^{-1} {\rm diag}(\lambda_1(t),\cdots,\lambda_{\dim}(t))O.$$
Such a family is included in the class of so called  {\it multiply twisted product metrics}.
Let $f_j$'s be positive functions and take $g_j$'s to be $n_j$ dimensional constant metrics.
Then
$$G=\bigoplus_j f_j(h)g_j$$ 
defines a functional metric on $\nctorus[\dim]$, where $\dim=\sum n_j$.
The $T$-functions computed in Example \ref{doubalytsitedtf} are for metrics on four tori with two twisting factors $f$ and $\tilde{f}$.
Very similar computations would work for multiply twisted product of even dimensional tori equipped with constant metrics.
We shall use the $T$-functions found for the doubly twisted metric on $\nctorus[4]$ to compute its total scalar curvature in the next section.

\section{Total scalar curvature of functional metrics}\label{Totalcurvaturesection}
The goal of this section is to show how the trace of the second density of the  heat trace of a  Laplace type $h$-differential operator $P$ on $\nctorus[\dim]$ can be simplified. 
When $P=\lap_{0,g}$ for some functional metric $g(h)$, we shall call 
$$\varphi(R)=(4\pi)^{\frac{d}2}\varphi(b_2(\lap_{0,g}))$$ 
the {\it total scalar curvature} of the metric $g(h)$ on the noncommutative torus.
In this section, we show that the total scalar curvature  for all functional metrics on $\nctorus$  vanishes.
This generalizes the Gauss-Bonnet theorem proved in \cite{Connes-Tretkoff2011,Fathizadeh-Khalkhali2012} for the conformally flat metrics on $\nctorus$.     

\subsection{Trace of the second density}
We start with the following lemma which essentially is an avatar of  the tracial property $\phi(ab)=\phi(ba)$ when the trace acts on an element  written in the contraction form. 
Versions of this can be found in \cite{Liu2018-II}.
\begin{lemma}\label{traceofcontraction} 
Let $h$ be a selfadjoint element of a C$^\ast$-algebra $A$ and let $\phi$ be a trace on $A$.
For every smooth function $F:\mathbb{R}^{n+1}\to \mathbb{C}$, we have
\begin{itemize}
\item[i)] $\phi\left(F(h_{(0)},\cdots, h_{(n)})\big(b_1\cdot\cdots  b_n\big)\right)= 
\phi\left(F(h_{(0)},\cdots,h_{(n-1)},h_{(0)})\big(b_1\cdot \cdots  b_{(n-1)}\big) b_n\right),$
\item[ii)] $\phi\left(F(h_{(0)},\cdots,h_{(n)})\big(b_1\cdots b_n\big)b_{n+1}\right)
= \phi\left(b_1 F(h_{(1)},\cdots,h_{(n)},h_{(0)})\big(b_2\cdots  b_{n+1}\big)\right).$
\end{itemize}
\end{lemma}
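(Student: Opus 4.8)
The plan is to reduce both identities to the ordinary tracial property $\phi(xy)=\phi(yx)$ by passing to the integral representation of the contraction form provided by Remark \ref{integralform}. As in the proof of Theorem \ref{derivationoffunctionofh}, it suffices to treat the case where $F$ is a Schwartz function on $\mathbb{R}^{n+1}$, the general smooth case following by approximation since the spectrum of $h$ is compact. Writing $\hat F$ for the Fourier transform of $F$, equation \eqref{integralformformula} gives
\[
F(h_{(0)},\cdots,h_{(n)})(b_1\cdots b_n)=\int_{\mathbb{R}^{n+1}} e^{i\xi_0 h}b_1 e^{i\xi_1 h}b_2\cdots b_n e^{i\xi_n h}\,\hat F(\xi)\,d\xi.
\]

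The one genuinely notational point I would settle first is the meaning of a repeated index in the operator part: the symbol $F(h_{(0)},\cdots,h_{(n-1)},h_{(0)})(b_1\cdots b_{n-1})$ has two arguments of $F$ assigned to the same tensor slot, so in the integral representation the two corresponding exponentials merge in the leftmost factor. Explicitly I would record
\[
F(h_{(0)},\cdots,h_{(n-1)},h_{(0)})(b_1\cdots b_{n-1})=\int_{\mathbb{R}^{n+1}} e^{i(\xi_0+\xi_n) h}b_1 e^{i\xi_1 h}\cdots b_{n-1} e^{i\xi_{n-1} h}\,\hat F(\xi)\,d\xi,
\]
and, since a cyclic relabelling of indices simply permutes which Fourier variable sits in which exponential,
\[
F(h_{(1)},\cdots,h_{(n)},h_{(0)})(b_2\cdots b_{n+1})=\int_{\mathbb{R}^{n+1}} e^{i\xi_1 h}b_2 e^{i\xi_2 h}\cdots b_{n+1} e^{i\xi_0 h}\,\hat F(\xi)\,d\xi.
\]

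With these representations in hand, both statements fall out by pulling the trace $\phi$ inside the Bochner integral---permissible because $\phi$ is bounded---and applying cyclicity to the integrand for each fixed $\xi$. For part i) I would move the rightmost factor $e^{i\xi_n h}$ of $e^{i\xi_0 h}b_1\cdots b_n e^{i\xi_n h}$ to the front, merging it with $e^{i\xi_0 h}$ into $e^{i(\xi_0+\xi_n)h}$; integrating against $\hat F(\xi)$ then yields precisely $\phi\big(F(h_{(0)},\cdots,h_{(n-1)},h_{(0)})(b_1\cdots b_{n-1})\,b_n\big)$ by the second displayed representation. For part ii) I would instead move the leftmost factor $e^{i\xi_0 h}$ of $e^{i\xi_0 h}b_1\cdots b_n e^{i\xi_n h}b_{n+1}$ to the rear, producing $b_1 e^{i\xi_1 h}\cdots b_{n+1}e^{i\xi_0 h}$, which after integration is exactly $\phi\big(b_1\,F(h_{(1)},\cdots,h_{(n)},h_{(0)})(b_2\cdots b_{n+1})\big)$ by the third representation.

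The only real obstacle here is the bookkeeping just described rather than any analytic difficulty: one must verify that the index conventions appearing on the right-hand sides correspond, under \eqref{integralformformula}, to the exponentials produced by a single cyclic rotation, and in particular that the repeated index in part i) is exactly what accounts for the sum $\xi_0+\xi_n$. Once the two auxiliary representations above are established, the identities are immediate, and the passage from Schwartz $F$ to general smooth $F$ is routine.
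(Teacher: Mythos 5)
Your strategy is exactly the one the paper sketches in its (two-line) proof: write the contraction form as the integral of Remark \ref{integralform}, pull the bounded trace inside the Bochner integral, apply cyclicity to the integrand, and reassemble via Lemma \ref{rearrangmentlemma}; the reduction to Schwartz class $F$ is also the paper's. Your treatment of part i) is correct: under the substitution reading of the multivariate functional calculus, the repeated index does merge the two exponentials into $e^{i(\xi_0+\xi_n)h}$, and cycling $e^{i\xi_n h}$ to the front proves the identity.

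Part ii), however, fails at precisely the bookkeeping step you yourself flagged as the crux. Under the substitution convention --- the only one compatible with the repeated index in part i) --- the symbol $F(h_{(1)},\cdots,h_{(n)},h_{(0)})$ plugs $h_{(1)}$ into the zeroth argument of $F$, \dots, and $h_{(0)}$ into the $n$-th argument; since the $k$-th argument of $F$ is paired with the Fourier variable $\xi_k$, the exponential landing in slot $0$ is $e^{i\xi_n h}$, not $e^{i\xi_1 h}$. Hence
\begin{equation*}
F(h_{(1)},\cdots,h_{(n)},h_{(0)})(b_2\cdots b_{n+1})
=\int_{\mathbb{R}^{n+1}} e^{i\xi_n h}\,b_2\, e^{i\xi_0 h}\, b_3\, e^{i\xi_1 h}\cdots b_{n+1}\, e^{i\xi_{n-1} h}\,\hat{F}(\xi)\,d\xi ,
\end{equation*}
whereas your third display is the integral representation of $F(h_{(n)},h_{(0)},\cdots,h_{(n-1)})(b_2\cdots b_{n+1})$, i.e.\ of the \emph{inverse} cyclic permutation; your relabelling claim is therefore inconsistent with the convention you (correctly) used in part i). What your cyclicity computation actually establishes is
\begin{equation*}
\phi\left(F(h_{(0)},\cdots,h_{(n)})\big(b_1\cdots b_n\big)b_{n+1}\right)
= \phi\left(b_1\, F(h_{(n)},h_{(0)},\cdots,h_{(n-1)})\big(b_2\cdots b_{n+1}\big)\right).
\end{equation*}
The two permutations coincide for $n=1$, so your argument is complete in that case, which is the only case the paper ever invokes. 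For $n\geq 2$ they differ, and the identity as printed is then actually false: take $n=2$, $F(t_0,t_1,t_2)=f_1(t_1)f_2(t_2)$ and $b_3=1$; the left-hand side is $\phi\big(b_1f_1(h)b_2f_2(h)\big)$ while the right-hand side as printed is $\phi\big(b_1f_2(h)b_2f_1(h)\big)$, and these differ already in $M_2(\mathbb{C})$ when $h$ has distinct eigenvalues and $f_1(h)$, $f_2(h)$ are the two spectral projections. So the repair is not to patch your relabelling --- it cannot be patched --- but to replace $F(h_{(1)},\cdots,h_{(n)},h_{(0)})$ by $F(h_{(n)},h_{(0)},\cdots,h_{(n-1)})$ in the statement (an inversion that appears to be a typo in the paper itself, harmless there since only $n=1$ is used); with that correction your computation goes through verbatim.
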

\begin{proof}
To prove the identities, first we write the element in the contraction form as an integral given in Remark \ref{integralform}.
After applying the trace property to the integrand, we can then use the rearrangement lemma \ref{rearrangmentlemma} to put the element back into the contraction form.
\end{proof}
For a Laplace type $h$-differential operator $P$ on $\nctorus[\dim]$, this lemma implies that  
\begin{equation}\label{1stepsimbtilde}
\varphi(b_2(P))=(4\pi)^{-\dim/2}\varphi\left(\tilde{B}_{2,1}^{ij}(h) \delta_i\delta_j(h)+\tilde{B}_{2,2}^{ij}(h_{(0)},h_{(1)})\big(\delta_i(h)\big)\delta_j(h)\right),
\end{equation}
where 
\begin{equation*}
\tilde{B}_{2,1}^{ij}(t)=B^{ij}_{2,1}(t,t),\quad \tilde{B}_{2,2}^{ij}(t_0,t_1)=B_{22}(t_0,t_1,t_0).
\end{equation*}

We can hence compute and simplify the functions $\tilde{B}_{2,1}^{ij}(t)$ and $\tilde{B}_{2,2}^{ij}(t_0,t_1)$ for a    Laplace type $h$-differential operator $P$:
\begin{align*}
\tilde{B}_{21}(t)=\frac{1}{\sqrt{\det P_2(t)}}\left(-P_{0,1}^{ij}(t,t)+\frac{1}{2} P_1^{ji}(t,t)+\frac1{12}\tr\left(P^{-1}(t)P'_2(t)\right) \, P_2^{ij}(t) \,-\frac{1}{3} \,P'{}_2^{ij}(t)\right),
\end{align*}
and 
\begin{align*}
\tilde{B}_{2,2}(t_0,t_1)=
&\frac1{\sqrt{\det P_2(t_0)}} \left( -P_{0,2}^{ij}(t_0,t_1,t_0)+
\big[t_0,t_1;P_1^{ij}(t_0,\cdot)\big] -\frac{2}{3} P'{}_2^{ij}(t_0) \frac{1}{t_0-t_1}\right.\\
&\qquad\qquad \qquad \,\left.+P_2^{ij}(t_0)\,\frac1{(t_0-t_1)^2} +\frac{ \tr(P_2(t_0)^{-1}P'{}_2(t_0))}{6(t_0-t_1)}P_2^{ij}(t_0)\right)\\
&+T_{k\ell;2,1}(t_0,t_1)
\left(
P_1^{ik}(t_0,t_1)
-2P_2^{ik}(t_0)\frac1{t_0-t_1}\right) 
\left(P^{j\ell}_1(t_1,t_0)  
-2 \big[t_0,t_1;P_2^{j\ell}\big]\right)\\
&-2T_{k\ell ;1,2}(t_0,t_1) \left(P_1^{ik}(t_0,t_1)
-2P_2^{ik}(t_0)\frac1{t_0-t_1}\right)P_2^{j\ell}(t_1)\frac1{t_0-t_1}\\
&+T_{;1,1}(t_0,t_1) \left( P_1^{ij}(t_0,t_1)
- 2 P_2^{ij}(t_0)\,\frac1{t_0-t_1}\right)\frac1{t_0-t_1}.
\end{align*}
To  write these functions in the present form, we also used Lemma \ref{Tfunctionsrelations} and some of the basic properties of $T$-functions from Remark \ref{basicpropTfunctions}. 
To illustrate these steps, we show the complete work required to be done on the tenth term of $B_{22}^{ij}$ from Proposition \ref{b2proposition}.
\begin{align*}
\lim_{t_2\to t_0}&- 4T_{k\ell mn;2,1,1}(t_0,t_1,t_2)\, P_2^{ik}(t_0)\,\big[t_0,t_1;P_2^{ j\ell}\big]\,\big[t_1,t_2;P_2^{mn}\big]\,\\
&=- 4T_{k\ell mn;2,1,1}(t_0,t_1,t_0)\, P_2^{ik}(t_0)\,\big[t_0,t_1;P_2^{ j\ell}\big]\,\big[t_1,t_0;P_2^{mn}\big]\,\\
&=- 4T_{k\ell mn;3,1}(t_0,t_1)\big[t_1,t_0;P_2^{mn}\big]\, P_2^{ik}(t_0)\,\big[t_0,t_1;P_2^{ j\ell}\big]\,\,\\
&=- 4(T_{k\ell;2,1}(t_0,t_1)-T_{k\ell;3}(t_0))\, P_2^{ik}(t_0)\,\big[t_0,t_1;P_2^{ j\ell}\big]\frac1{t_0-t_1}\,\\
&=- 4T_{k\ell;2,1}(t_0,t_1) P_2^{ik}(t_0)\,\big[t_0,t_1;P_2^{ j\ell}\big]\frac1{t_0-t_1}\,\\
&\quad+ 4\frac{P_2^{-1}(t_0)_{k\ell}}{4\sqrt{\det P_2(t_0)}}\, P_2^{ik}(t_0)\,(P_2(t_0)^{j\ell}-P_2(t_1)^{j\ell})\frac1{(t_0-t_1)^2}\,\\
&=- 4T_{k\ell;2,1}(t_0,t_1) P_2^{ik}(t_0)\,(P_2^{ j\ell}(t_0)-P_2^{ j\ell}(t_1))\frac1{(t_0-t_1)^2}\,\\
&\quad+\frac{P_2^{-1}(t_0)_{k\ell}}{\sqrt{\det P_2(t_0)}}\, P_2^{ik}(t_0)P_2(t_0)^{j\ell}\frac1{(t_0-t_1)^2}\,
- \frac{P_2^{-1}(t_0)_{k\ell}}{\sqrt{\det P_2(t_0)}}\, P_2^{ik}(t_0)P_2(t_1)^{j\ell}\frac1{(t_0-t_1)^2}\,\\
&=- 4T_{k\ell;2,1}(t_0,t_1) P_2^{ik}(t_0)\,P_2^{ j\ell}(t_0)\frac1{(t_0-t_1)^2}\,
+ 4T_{k\ell;2,1}(t_0,t_1) P_2^{ik}(t_0)\,P_2^{ j\ell}(t_1)\frac1{(t_0-t_1)^2}\,\\
&\quad+ \frac{1}{\sqrt{\det P_2(t_0)}}P_2(t_0)^{ji}\frac1{(t_0-t_1)^2}\,
- \frac{1}{\sqrt{\det P_2(t_0)}}\,P_2(t_1)^{ji}\frac1{(t_0-t_1)^2}.
\end{align*}

Further, we use compatibility of the trace  and  the derivations, $\varphi \circ \delta_j=0$, and the Leibniz  rule to turn the term $\tilde{B}_{2,1}^{ij}(h)\delta_i\delta_j(h)$ into the form of the other summand, 
\begin{equation*}
\begin{aligned}
 \varphi\left(\tilde{B}_{2,1}^{ij}(h)\delta_i\delta_j(h)\right)
 &= \varphi\left(\delta_i(\tilde{B}_{2,1}^{ij}(h)\delta_j(h))\right)-\varphi\left(\delta_i(\tilde{B}_{2,1}^{ij}(h))\delta_j(h)\right)\\
 &=-\varphi\left(\delta_i(\tilde{B}_{2,1}^{ij}(h))\delta_j(h)\right)\\ 
 &=-\varphi\left([h_{(0)},h_{(1)};\tilde{B}_{2,1}^{ij}]\big(\delta_i(h)\big)\delta_j(h)\right).
\end{aligned}
\end{equation*}
Then, we put both of the terms together and rewrite \eqref{1stepsimbtilde} as
\begin{equation*}
\varphi(b_2(P))=(4\pi)^{-\dim/2}\varphi\left(F^{ij}(h_{(0)},h_{(1)})\big(\delta_i(h)\big)\delta_j(h)\right),
\end{equation*}
where $F^{ij}(t_0,t_1)=[t_0,t_1;\tilde{B}_{2,1}^{ij}]+\tilde{B}_{2,2}^{ij}(t_0,t_1)$.
Applying identity $ii)$ of Lemma \ref{traceofcontraction}, we find that  
\begin{equation*}
\varphi\left((F^{ij}(h_{(0)},h_{(1)})-F^{ji}(h_{(1)},h_{(0)}))\big(\delta_i(h)\big)\delta_j(h)\right)=0.
\end{equation*} 
Therefore, only the expression $\frac12(F^{ij}(t_0,t_1)+F^{ji}(t_1,t_0)))$, denoted by $F_S^{ij}(t_0,t_1)$, may contribute to the value of $\varphi(b_2(P))$.  
This fact was first noticed in \cite{Connes-Tretkoff2011} for the conformally flat metrics and used in other works \cite{Fathizadeh-Khalkhali2013,Dbrowski-Sitarz2015} to prove a Gauss-Bonnet theorem in dimension two.

In the case of conformally flat metrics, since we have $F^{ij}=F^{ji}$, the function $F_S^{ij}$ is indeed the symmetric part of the function $F^{ij}$ and this is the rationale behind our notation $F_S^{ij}$.

\begin{example}\label{totalcuroftwistedexample}
In this example, we examine what we have discussed so far for the total scalar curvature of twisted product of flat $(\nctorus[r],g)$ and $(\nctorus[\dim-r],\tilde{g})$ with twisting factor $f(t)^{-1}$ (see the Section \ref{twistedmetricsec}). 
We assume that $f(t)=t$ and from Theorem \ref{scalartwisted} we have
\begin{equation*}
\tilde{B}^{ij}_{2,1}(x)=
\begin{cases}
-\frac{1}{6} (r-1) x^{-\frac{r}{2}} &1\leq i,j\leq r\\
-\frac{1}{6} r x^{-\frac{r}{2}-1} & r< i,j\leq d\\
0 & otherwise.
\end{cases}
\end{equation*} 
The $\tilde{B}^{ij}_{22}(x,y)$ for  $1\leq i,j\leq r$ is given by
\begin{align*}
\Big(&
-6  x^{\frac{3 r}{2}} y^3
+3 x^r y^{\frac{r}{2}+1} ((r-2) y-r x) ((r-1) y-(r-2) x)\\
&
+x^{\frac{r}2} y^r \left(-r (r-1) (r-2) (x-y)^3+6 x y ((r-1) x-(r-2) y)\right)\\
&
+3 x^2 y^{\frac{3 r}{2}} (rx-(r-2)y )\Big)/\Big({3 (r-2) r (x-y)^4}{x^{r} y^{r}}\Big).
\end{align*}
Also, $\tilde{B}^{ij}_{22}(x,y)$ for  $r< i,j\leq d$ is given by
\begin{align*}
\Big(&
-6 x^{\frac{3 r}{2}+1} y^2 +3 x^{r+1} y^{r/2} \left((r-3) (r-2) x^2-(r-4) (2 r-3) x y+(r^2-6r+4) y^2\right)\\
&
+x^{\frac{r}2} y^r \big(r(r-4)(r-2) y^3-3 (r^3-6r^r+8r+2) x y^2\\
&\qquad \qquad\qquad +3 (r^3-6r^2+6r+8) x^2 y-(r-2) (r^2-4r-6) x^3\big)\\
&
+3 x^2 y^{\frac{3 r}{2}} ((r-2) x-(r-4) y)\Big)/\Big({3 (r-4) (r-2) (x-y)^4}{x^{r+1} y^{r}}\Big).
\end{align*}
Therefore, there are two symmetric functions $F_S$ and $\tilde{F}_S$ such that 
\begin{align*}
\varphi(R)=
&\sqrt{|g||\tilde{g}|}\varphi\left(g^{ij}F_S(f(h_{(0)},h_{(1)}))[h_{(0)},h_{(1)};f](\delta_i(h))\delta_j(h)\right)\\
&+\sqrt{|g||\tilde{g}|}\varphi\left(\tilde{g}^{ij}\tilde{F}_S(f(h_{(0)},h_{(1)}))[h_{(0)},h_{(1)};f](\delta_i(h))\delta_j(h)\right).
\end{align*}
These functions are given by
\begin{align*}
F_S(x,y)&=\frac{x^{-r} y^{-r}}{2 r (x-y)^3} \left(y x^{\frac{r}2}+x y^{\frac{r}2}\right) \left((r-1) x^{\frac{r}2} y^{\frac{r}2} (x-y)-y x^r+x y^r\right),
\\
\tilde{F}_S(x,y)&=\frac{x^{-r} y^{-r}}{2 (r-2) (x-y)^3} \left(x^{\frac{r}2} y^r (rx +(1-r)y)-y x^{\frac{3 r}{2}}+x^r y^{\frac{r}2} ((r-1) x-r y)+x y^{\frac{3 r}{2}}\right).
\end{align*}
\end{example}

Applied to  the Laplacian of a functional metric $g$ on $\nctorus[\dim]$, what we have covered so far in this section  gives a simplified term whose trace is equal to the  total scalar curvature of $(\nctorus[\dim], g)$, which we summarize in the following proposition.
\begin{proposition}\label{totalcurvature}
The total scalar curvature of $\nctorus[\dim]$ equipped with a functional metric $g$ is given by 
\begin{equation*}
\varphi(R)=\varphi\left(F_S^{ij}(h_{(0)},h_{(1)})\big(\delta_i(h)\big)\delta_j(h)\right),
\end{equation*}
where $F_S^{ij}(t_0,t_1)$ is given by
\begin{equation*}
\begin{aligned}
F_S^{ij}(t_0,t_1)= \frac1{2(t_0-t_1)^2}\Big(
&A^{ij}\sqrt[4]{|g|(t_0)|g|(t_1)}-2A^{ij} T_{;1,1}(t_0,t_1)\\
&+T_{k,l;1,2}(t_0,t_1) \big(2  A^{ik} g^{lj}(t_1)+2A^{kj}  g^{il}(t_1)-A^{ik}A^{lj}\big)\\
&+T_{k,l;2,1}(t_0,t_1)\big (2A^{ik}g^{lj}(t_0)+2  A^{kj} g^{il}(t_0) -A^{ik}A^{lj}\big)\Big),
\end{aligned}
\end{equation*} 
and 
$A^{ij}=
{|g|^{\frac14}(t_0 )}{|g|^{\frac{-1}4} (t_1 )}g^{i j} (t_0 )
+{|g|^{\frac14} (t_1 )}{|g|^{\frac{-1}4}(t_0)} g^{ij} (t_1 ).$\qed
\end{proposition}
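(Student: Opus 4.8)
The plan is to specialize the general trace identity just developed in this subsection to the geometric operator $P=\lap_{0,g}$. At this stage we already know that
\[
\varphi(b_2(\lap_{0,g}))=(4\pi)^{-\dim/2}\,\varphi\!\left(F_S^{ij}(h_{(0)},h_{(1)})\big(\delta_i(h)\big)\delta_j(h)\right),
\]
where $F^{ij}(t_0,t_1)=[t_0,t_1;\tilde{B}_{2,1}^{ij}]+\tilde{B}_{2,2}^{ij}(t_0,t_1)$ and $F_S^{ij}(t_0,t_1)=\tfrac12\big(F^{ij}(t_0,t_1)+F^{ji}(t_1,t_0)\big)$, the antisymmetric remainder being annihilated under $\varphi$ by part $ii)$ of Lemma \ref{traceofcontraction}. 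The reduction of the weight-three $T$-functions has already been carried out in deriving the displayed formulas for $\tilde{B}_{2,1}^{ij}$ and $\tilde{B}_{2,2}^{ij}$ via Lemma \ref{Tfunctionsrelations} and Remark \ref{basicpropTfunctions}, so that $\tilde{B}_{2,2}^{ij}$ already involves only $T_{;1,1}$, $T_{k\ell;2,1}$ and $T_{k\ell;1,2}$. Since $\varphi(R)=(4\pi)^{\dim/2}\varphi(b_2(\lap_{0,g}))$, it remains only to substitute the functional parts of $\lap_{0,g}$ from Proposition \ref{laplacianforg} and to simplify $F_S^{ij}$ to the asserted closed form.

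First I would record the two determinant identities that drive the computation. Since $(g^{ij}(t))$ is the inverse of $(g_{ij}(t))$, the principal symbol $P_2(t)=(g^{ij}(t))$ satisfies $\det P_2(t)=|g|(t)^{-1}$, whence $1/\sqrt{\det P_2(t)}=\sqrt{|g|}(t)$, and $\tr\big(P_2^{-1}(t)P_2'(t)\big)=-(\log|g|)'(t)$ by Jacobi's formula. Feeding $P_2^{jk}(t)=g^{jk}(t)$ together with $P_1^{jk}$ and $P_{0,1}^{jk}$ into $\tilde{B}_{2,1}^{ij}(t)=B_{2,1}^{ij}(t,t)$ yields an expression built solely from $g^{ij}(t)$, $|g|(t)$ and first derivatives. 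I would then compute the first order divided difference $[t_0,t_1;\tilde{B}_{2,1}^{ij}]$ directly from its recursive definition, using the Leibniz rule for divided differences to treat the products $|g|^{\pm1/4}g^{ij}$; this supplies the terms carrying the overall factor $1/(t_0-t_1)$ as well as the coincidence contribution that, combined below, produces the $A^{ij}\sqrt[4]{|g|(t_0)|g|(t_1)}$ piece.

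Next I would substitute the three–term $P_1^{jk}$ and $P_{0,2}^{jk}$ into $\tilde{B}_{2,2}^{ij}(t_0,t_1)=B_{2,2}^{ij}(t_0,t_1,t_0)$. The products $P_1\cdot P_1$ and the divided differences $[t_0,t_1;|g|^{\pm1/4}]$, $[t_0,t_1;g^{jk}]$ expand into many cross terms, multiplying the $T$-functions $T_{;1,1}$, $T_{k\ell;2,1}$ and $T_{k\ell;1,2}$ as well as the non-$T$ boundary part carrying the prefactor $\sqrt{|g|}(t_0)$. Adding $[t_0,t_1;\tilde{B}_{2,1}^{ij}]$ and symmetrizing to form $F_S^{ij}$ is the heart of the argument. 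Here the target structure should emerge: the non-$T$ terms, of the shape $\sqrt{|g|}(t_0)g^{ij}(t_0)$ and $\sqrt{|g|}(t_1)g^{ij}(t_1)$, collapse after symmetrization into $A^{ij}\sqrt[4]{|g|(t_0)|g|(t_1)}/\big(2(t_0-t_1)^2\big)$, while the conformal factors $|g|^{1/4}(t_0)|g|^{-1/4}(t_1)$ generated by the conjugations $W$ and $J$ are precisely the combination packaged into $A^{ij}$, so that the coefficients of $T_{;1,1}$, $T_{k,l;1,2}$ and $T_{k,l;2,1}$ reorganize into the stated $A^{ij}$, $g^{ij}(t_0)$ and $g^{ij}(t_1)$ expressions.

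The main obstacle will be the sheer bookkeeping of the $|g|^{\pm1/4}$ divided-difference factors together with the numerous $P_1\cdot P_1$ cross terms, and in particular tracking which combinations are symmetric and which are antisymmetric under the exchange $(i,t_0)\leftrightarrow(j,t_1)$, since this is where sign errors most easily creep in. The decisive simplifying devices are the identity $1/\sqrt{\det P_2}=\sqrt{|g|}$, the definition of $A^{ij}$ absorbing the conformal factors, and the symmetrization itself, which legitimately discards the antisymmetric part by Lemma \ref{traceofcontraction}. Once these are applied systematically, only $T_{;1,1}$, $T_{k,l;1,2}$ and $T_{k,l;2,1}$ survive with the claimed coefficients, which is exactly the formula for $F_S^{ij}$ in the statement.
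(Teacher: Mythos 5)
Your proposal is correct and follows essentially the same route as the paper: the paper's own proof of this proposition is precisely the Section 5.1 machinery (the trace identity \eqref{1stepsimbtilde}, the simplified formulas for $\tilde{B}_{2,1}^{ij}$ and $\tilde{B}_{2,2}^{ij}$, integration by parts, and symmetrization via Lemma \ref{traceofcontraction}) applied to $\lap_{0,g}$ by substituting the functional parts from Proposition \ref{laplacianforg}. Your auxiliary identities $1/\sqrt{\det P_2}=\sqrt{|g|}$ and $\tr\big(P_2^{-1}P_2'\big)=-(\log|g|)'$, and your observation that $A^{ij}\sqrt[4]{|g|(t_0)|g|(t_1)}=\sqrt{|g|}(t_0)g^{ij}(t_0)+\sqrt{|g|}(t_1)g^{ij}(t_1)$ absorbs the non-$T$ terms after symmetrization, are exactly the simplifications implicit in the paper's computation.
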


\begin{example}\label{dtflatT4}
We conclude this section by computing the functions $F^{ij}_S$ for the doubly twisted product functional metric $f(h)^{-1}g\oplus \tilde{f}(h)^{-1}\tilde{g}$ on $\nctorus[4]$ whose $T$-functions was found in Example \ref{doubalytsitedtf}. 
Using Proposition \ref{totalcurvature}, we have in this case
\begin{align*}
 A=(\tilde{f}(t_0)+\tilde{f}(t_1)) \sqrt{\frac{f(t_0) f(t_1)}{\tilde{f}(t_0) \tilde{f}(t_1)}} g^{-1}\oplus
 (f(t_0)+f(t_1)) \sqrt{\frac{\tilde{f}(t_0) \tilde{f}(t_1)}{f(t_0) f(t_1)}} \tilde{g}^{-1}.
\end{align*}
Then by substituting all the terms in Proposition \ref{totalcurvature},   we find that  for $1\leq i,j\leq 2 $
\begin{align*}
&F_S^{ij}(t_0,t_1)=\frac{\sqrt{|g| |\tilde{g}|}(\tilde{f}(t_0)^2-\tilde{f}(t_1)^2)g^{ij}}{4(t_0-t_1)^2(f(t_1) \tilde{f}(t_0)-f(t_0) \tilde{f}(t_1))^2}\times \\
&\hspace*{-0.2cm}\left(\frac{f (t_1 )}{\tilde{f} (t_1 )}  \Big(f (t_0 )  \big(\log  (\frac{f (t_0 )  \tilde{f} (t_1 )}{f (t_1 )  \tilde{f} (t_0 )} )+1 \big)+f (t_1 )\Big)
\hspace*{-0.1cm}-\hspace*{-0.1cm}\frac{f (t_0 )}{\tilde{f} (t_0)}  \Big(f (t_1 )  \big(\hspace*{-0.1cm} \log  (\frac{f (t_1 )  \tilde{f} (t_0 )}{f (t_0 )  \tilde{f} (t_1 )} )+1 \big)+f (t_0 ) \Big)\hspace*{-0.1cm}\right).
\end{align*} 
The formula for functions $F_S^{ij}(t_0,t_1)$ for $3\leq i,j\leq 4 $, can be obtained   by interchanging $f$ with $\tilde{f}$ and  $g^{-1}$ with $\tilde{g}^{-1}$ in the above formula.
\end{example}

\subsection{Dimension two and a Gauss-Bonnet theorem}\label{GBdim2sec}
In this subsection, we study the functions $F_S^{ij}$ in dimension two. 
We show that these functions vanish  for the noncommutative two torus equipped with a functional metric $g$.
This means that the total scalar curvature of  $(\nctorus,g)$ is independent of $g$. 
Similar to the conformally flat metrics \cite{Connes-Tretkoff2011,Fathizadeh-Khalkhali2012}, we call this result the Gauss-Bonnet theorem for functional metrics.
\begin{theorem}[Gauss-Bonnet Theorem]\label{GBindim2}
The total scalar curvature $\varphi(R)$ of the noncommutative two tori equipped with a functional  metric  vanishes, hence it is independent of the metric.
\end{theorem}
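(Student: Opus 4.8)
The plan is to reduce the statement to a pointwise identity among scalar functions and then to verify that identity using the special algebra of $2\times 2$ matrices. By Proposition \ref{totalcurvature} we have
\[
\varphi(R)=\varphi\left(F_S^{ij}(h_{(0)},h_{(1)})\big(\delta_i(h)\big)\delta_j(h)\right),
\]
so it suffices to prove the stronger pointwise statement that $F_S^{ij}(t_0,t_1)\equiv 0$ for all $i,j\in\{1,2\}$ and all $t_0,t_1$ in the spectrum of $h$. This is exactly the claim signalled in the preamble (``these functions vanish''); it is stronger than the bare vanishing of the trace, since it says that the integrand is already identically zero before $\varphi$ is applied.

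First I would make the three $T$-functions entering the formula for $F_S^{ij}$ in Proposition \ref{totalcurvature} --- namely $T_{;1,1}$, $T_{k,l;1,2}$ and $T_{k,l;2,1}$ --- completely explicit. The decisive feature of dimension two is that the adjugate of a $2\times 2$ matrix is a \emph{linear} function of its entries, so that with $P(s)=sP_2(t_0)+(1-s)P_2(t_1)$ and $P_2(t)=g^{-1}(t)$ one has $\mathrm{adj}\,P(s)=s\,\mathrm{adj}\,P_2(t_0)+(1-s)\,\mathrm{adj}\,P_2(t_1)$ with $\mathrm{adj}\,P_2(t_m)=|g|(t_m)^{-1}g(t_m)$. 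Consequently $P^{-1}(s)=\mathrm{adj}\,P(s)/\det P(s)$ is an explicit affine combination of the constant matrices $|g|(t_m)^{-1}g(t_m)$ divided by the scalar quadratic $\det P(s)$, and by Lemma \ref{integralformofTna} every $T$-function collapses into a fixed tensor times one of the elementary scalar integrals $\int_0^1 s^a(1-s)^b\,\det P(s)^{-1/2}\,ds$; this is the content of Lemma \ref{Tfunctionsdimtwo}. Higher powers of $B_0$, which raise the exponent of $\det P(s)$ in the denominator, can be brought back to these basic integrals through the recursion of Lemma \ref{Tfunctionsrelations}. I would also record the symmetry $T_{k,l;1,2}(t_0,t_1)=T_{k,l;2,1}(t_1,t_0)$ from Remark \ref{basicpropTfunctions}(ii), which pairs the two middle blocks of $F_S^{ij}$ under $t_0\leftrightarrow t_1$.

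With these evaluations in hand, I would substitute them into the displayed formula for $F_S^{ij}$ and contract the indices. Here the relevant dimension-two identities are those tying the metric to its inverse through the determinant: contracting $g^{lj}(t_m)$ against the lower-index blocks $g_{kl}(t_m)$ coming from $\mathrm{adj}\,P_2$ produces Kronecker deltas or the transport matrix $g(t_0)g^{-1}(t_1)$, and the quantity $A^{ij}$ is built precisely so that these contractions recombine. The expectation is that, after grouping the result by the basic scalar integrals, the coefficient tensor multiplying each integral vanishes identically, so that $F_S^{ij}\equiv 0$. The main obstacle is exactly this last cancellation: it is a purely algebraic but delicate bookkeeping problem, and it genuinely uses dimension two --- the linearity of the adjugate and the fact that $\det P(s)$ is a quadratic rather than a higher-degree polynomial --- so one should not expect the analogous computation to vanish in higher dimensions. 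Once $F_S^{ij}\equiv 0$ is established, Proposition \ref{totalcurvature} immediately yields $\varphi(R)=0$ for every functional metric on $\nctorus$, which is the asserted Gauss--Bonnet theorem and recovers the conformally flat results of \cite{Connes-Tretkoff2011,Fathizadeh-Khalkhali2012} as a special case.
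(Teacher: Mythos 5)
Your reduction and toolkit match the paper's own proof: you invoke Proposition \ref{totalcurvature}, the explicit dimension-two evaluation of the $T$-functions (Lemma \ref{Tfunctionsdimtwo}, whose proof indeed rests on the linearity of the $2\times 2$ adjugate that you point to), and you correctly aim at the pointwise identity $F_S^{ij}(t_0,t_1)\equiv 0$, which is exactly what the paper establishes. The problem is that the decisive step --- the cancellation itself --- is never carried out: you write that ``the expectation is'' that the coefficient tensors vanish, and you yourself flag this as ``the main obstacle.'' An expectation is not a proof, and nothing in your text explains \emph{why} the bookkeeping closes up; as it stands, your argument only shows that $F_S^{ij}$ is some explicit, complicated tensor-valued function, not that it is zero.

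What the paper supplies at precisely this point is a structural mechanism, not term-by-term bookkeeping. After substituting the dimension-two $T$-functions, the matrix $\bigl(F_S^{ij}\bigr)$ factors as a \emph{scalar} function (containing $T_{;1,1}$ and the determinants) times the symmetrization $\alpha(t_0,t_1)+\alpha(t_1,t_0)$ of a single matrix-valued function $\alpha$; the vanishing is then the statement that $\alpha$ is antisymmetric under $t_0\leftrightarrow t_1$. That antisymmetry follows from the Cayley--Hamilton identity $\det(A)A^{-1}=\tr(A)\,\mathrm{I}-A$ for $2\times 2$ matrices, which yields
\begin{equation*}
\Bigl(|g(t_0)-g(t_1)|-|g|(t_0)-|g|(t_1)+|g|(t_0)\,g^{-1}(t_0)g(t_1)\Bigr)g^{-1}(t_0)=-|g|(t_1)\,g^{-1}(t_1),
\end{equation*}
and hence $\alpha(t_0,t_1)=|g|^{\frac12}(t_1)|g|(t_0)g^{-1}(t_0)-|g|^{\frac12}(t_0)|g|(t_1)g^{-1}(t_1)$, manifestly antisymmetric. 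This is the single identity that your phrase ``dimension-two identities tying the metric to its inverse through the determinant'' gestures toward, but without isolating it (or performing an equivalent explicit computation) the proposal stops exactly where the theorem begins. Two smaller inaccuracies: the elementary integrals produced by $T_{k,l;2,1}$ carry $\det P(s)^{-3/2}$ rather than $\det P(s)^{-1/2}$, since $P^{-1}(s)$ contributes an extra $\det P(s)$ in the denominator beyond what the powers of $B_0$ give; and the closed form of $T_{;1,1}$ depends on the sign of $\det\bigl(P_2(t_1)-P_2(t_0)\bigr)$ --- harmless in the end, because $T_{;1,1}$ enters only through the overall scalar factor, but it must be acknowledged if one works with the explicit formulas.
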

Before we give the proof of the theorem, we make some comments and prove a lemma.
To prove the theorem, we evaluate the functions $F_S^{ij}$ for dimension two.
What makes it possible for us to do this is that $T$-functions present in the formula for the total scalar curvature from Proposition \ref{totalcurvature} can be explicitly evaluated in dimension two for all functional metrics.
In particular, the function $T_{;1,1}$ in dimension two is given by a basic integral; that is the integral of an inverse square root of a quadratic function, and we have 
\begin{equation*}
T_{;1,1}(t_0,t_1)=
\begin{cases} 
\frac{1}{\sqrt{a}}\ln\left(\frac{2a+b+2\sqrt{a^2+ab+ac}}{b+2\sqrt{ac}}\right)& a>0\\
\frac{1}{\sqrt{-a}}\sin^{-1}\left(\frac{-2a+b}{\sqrt{b^2-4ac}}\right)-\frac{1}{\sqrt{-a}}\sin^{-1}\left(\frac{b}{\sqrt{b^2-4ac}}\right)
& a<0
\end{cases}
\end{equation*}
where $c=\det(P_2(t_1))$, $b=-\det(P_2(t_1))\tr(I-P_2(t_0)P_2(t_1)^{-1})$ and $a=\det(P_2(t_1)-P_2(t_0)).$

The form of $T_{;1,1}(t_0,t_1)$  depends on the sign of $a$.
In the conformal case, $a=(f(t_1)-f(t_0))^2$ which is always positive.
In  general, however, there are functional metrics for which $a$ is negative. 
For instance, consider the diagonal functional metric with the diagonal entries $t$ and a positive decreasing function $f(t)$. 
For all nonzero $t$, then $\det(P_2(t_1)-P_2(t_0))<0$.
On the other hand, the case $a=0$, which is the limit of the other cases, can also happen.
The diagonal functional metric with one constant diagonal entry is an instance of such a case. 
While all three cases are possible at the same time,  we will deal with them separately.
This is simply because the conditions $a>0$ or $a<0$ are open conditions and we can always choose to work with a selfadjoint element $h$  whose spectrum is either in  $a^{-1}(0,+\infty)>0$ or $a^{-1}(-\infty,0)<0$.

By the property $T_{k,l;2,1}(t_0,t_1)=T_{k,l;1,2}(t_1,t_0)$, we only need to find $T_{k,l;2,1}$. 
This is done in the following lemma in which we write  $T_{k,l;2,1}$ in terms of $T_{1,1}$.
\begin{lemma}\label{Tfunctionsdimtwo}
In  dimension two, when $\det(P_2(t_0)-P_2(t_1))\neq 0$, we have
\begin{equation*}
\begin{aligned}
&T_{k,l;2,1}(t_0,t_1)=\frac1{2|g(t_0)-g(t_1)|} T_{1,1}( t_0 , t_1 ) \left(\frac{g_{k l}( t_0 )}{|g|( t_0 )}-\frac{g_{k l}( t_1 )}{|g|( t_1 )}\right)\\
&+\frac{|g|( t_0 )^{3/2} |g|( t_1 ) (\sqrt{|g|( t_0 )} \sqrt{|g|( t_1 )}+|g|( t_1 )-|g(t_0)-g(t_1)|)}{|g(t_0)-g(t_1)| \left(|g(t_0)-g(t_1)|-(\sqrt{|g|( t_0 )}+\sqrt{|g|( t_1 )})^2\right)}\left(\frac{g_{k l}( t_0 )}{|g|( t_0 )}-\frac{g_{k l}( t_1 )}{|g|( t_1 )}\right)\\
&+\frac{|g|( t_0 )^{3/2}}{\left(\sqrt{|g|( t_0 )}+\sqrt{|g|( t_1 )}\right)^2-|g(t_0)-g(t_1)|}g_{kl}( t_1 ).
\end{aligned}
\end{equation*}
Moreover, for $\det(P_2(t_0)-P_2(t_1))= 0$, we have
\begin{equation*}
\begin{aligned}
T_{k,l;2,1}(t_0,t_1)=&\frac{|g|( t_0 )^{3/2} }{\left(\sqrt{|g|( t_0 )}+\sqrt{|g|( t_1 )}\right)^2}g_{kl}( t_1 ),\qquad 
T_{;1,1}(t_0,t_1)=&\frac{\sqrt{|g|( t_0 )|g|( t_1 )}}{\sqrt{|g|( t_0 )}+\sqrt{|g|( t_1 )}}.
\end{aligned}
\end{equation*}
\end{lemma}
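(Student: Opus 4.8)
The plan is to reduce everything to the one-dimensional integral formula of Lemma \ref{integralformofTna} specialized to $\dim=2$ and $n=1$, and then to evaluate the resulting elementary integrals. First I would write $P(s)=sP_2(t_0)+(1-s)P_2(t_1)$ for $s\in[0,1]$ and record that, in two dimensions, $Q(s):=\det P(s)=as^2+bs+c$ is a quadratic in $s$ whose coefficients are exactly the $a,b,c$ listed before the lemma; in particular $Q(0)=\det P_2(t_1)=|g|^{-1}(t_1)$ and $Q(1)=\det P_2(t_0)=|g|^{-1}(t_0)$, and the leading coefficient $a=\det(P_2(t_0)-P_2(t_1))$ vanishes precisely in the degenerate case. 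With this notation $T_{;1,1}(t_0,t_1)=\int_0^1 Q(s)^{-1/2}\,ds$ is the standard integral of the inverse square root of a quadratic, whose value splits according to $\mathrm{sgn}(a)$ into the logarithmic, inverse-sine, and (when $a=0$) linear forms already recorded before the statement; I would simply invoke that evaluation.

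For $T_{k,l;2,1}$ in the generic case $a\neq 0$ I would use the specialized formula $T_{\vec n;\alpha,1}$ with $\alpha=2$, so that the integrand is $\tfrac12\, s\,P^{-1}(s)_{kl}\,Q(s)^{-1/2}$. The key two-dimensional simplification is that $P^{-1}(s)_{kl}=\mathrm{adj}(P(s))_{kl}/Q(s)$ with $\mathrm{adj}(P(s))_{kl}$ an \emph{affine} function of $s$ (a single entry of $P(s)$ up to sign), so the integrand becomes $\tfrac12\,N(s)\,Q(s)^{-3/2}$ with $N(s)=s\,\mathrm{adj}(P(s))_{kl}$ a quadratic vanishing at $s=0$. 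Such an integral is elementary: I would use the ansatz
\[
\frac{N(s)}{Q(s)^{3/2}}=\frac{d}{ds}\!\left(\frac{\alpha s+\beta}{\sqrt{Q(s)}}\right)+\frac{\lambda}{\sqrt{Q(s)}},
\]
and determine $\alpha,\beta,\lambda$ by matching the three coefficients of the resulting quadratic numerator. Integrating over $[0,1]$ then produces a boundary contribution $\big[(\alpha s+\beta)\,Q(s)^{-1/2}\big]_0^1$, evaluated with $Q(1)=|g|^{-1}(t_0)$ and $Q(0)=|g|^{-1}(t_1)$, together with $\lambda\,T_{;1,1}(t_0,t_1)$ from the remaining term. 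Rewriting $\alpha,\beta,\lambda$ and the endpoint values back in terms of $g_{kl}$, $|g|$ and $\det(g(t_0)-g(t_1))$ should reproduce the three-term expression in the statement; the companion identity $T_{k,l;1,2}(t_0,t_1)=T_{k,l;2,1}(t_1,t_0)$ follows from the symmetry in Remark \ref{basicpropTfunctions}(ii), so only $T_{k,l;2,1}$ needs to be treated.

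For the degenerate case $\det(P_2(t_0)-P_2(t_1))=0$ I would observe that $Q(s)=bs+c$ is affine, so both $\int_0^1 Q^{-1/2}\,ds$ and $\int_0^1 s\,\mathrm{adj}(P(s))_{kl}\,Q^{-3/2}\,ds$ have elementary antiderivatives in which no logarithm or inverse sine appears; evaluating at the endpoints, using $b+c=|g|^{-1}(t_0)$ and $c=|g|^{-1}(t_1)$, and then factoring the difference $\sqrt{Q(1)}-\sqrt{Q(0)}$ collapses the answers to the two compact formulas claimed. The main obstacle I anticipate is not the integration but the final algebraic simplification: matching the boundary-plus-$T_{;1,1}$ combination to the stated form requires expressing the cofactor entries and the combinations $\big(\sqrt{|g|(t_0)}\pm\sqrt{|g|(t_1)}\big)^2$ and $\det(g(t_0)-g(t_1))$ consistently, and care is needed because the passage from $P_2=g^{-1}$ back to $g$ rearranges numerators and denominators in a nontrivial way. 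The case distinction on $\mathrm{sgn}(a)$, by contrast, is confined to the separate evaluation of $T_{;1,1}$ and does not affect the reduction of $T_{k,l;2,1}$ to $T_{;1,1}$.
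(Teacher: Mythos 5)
Your reduction is the right one, and it is essentially the argument the paper intends but never actually writes down: the proof environment that follows Lemma \ref{Tfunctionsdimtwo} in the paper is in fact the proof of Theorem \ref{GBindim2} (it substitutes the lemma into Proposition \ref{totalcurvature} and ends with ``this completes the proof of the theorem''), so the lemma itself is left unproved. Your specialization of Lemma \ref{integralformofTna} is correct: in dimension two, $Q(s)=\det P(s)$ is the quadratic $as^2+bs+c$ with the coefficients listed before the lemma, $T_{;1,1}=\int_0^1 Q(s)^{-1/2}\,ds$, and since the adjugate of a $2\times 2$ matrix is linear in the matrix, $T_{k,l;2,1}=\tfrac12\int_0^1 N(s)Q(s)^{-3/2}\,ds$ with $N(s)=s\,\mathrm{adj}(P(s))_{kl}$ a quadratic vanishing at $s=0$. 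Your ansatz then produces boundary terms plus $\tfrac{\lambda}{2}\,T_{;1,1}$ with $\lambda=n_2/a$, $n_2$ the leading coefficient of $N$; using $\mathrm{adj}(P_2(t_j))=g(t_j)/|g|(t_j)$ and $|g(t_0)-g(t_1)|=a\,|g|(t_0)|g|(t_1)$ this coefficient equals
\begin{equation*}
\frac{\lambda}{2}=\frac{|g|(t_0)\,|g|(t_1)}{2\,|g(t_0)-g(t_1)|}\left(\frac{g_{kl}(t_0)}{|g|(t_0)}-\frac{g_{kl}(t_1)}{|g|(t_1)}\right),
\end{equation*}
which is exactly the structure of the lemma's first term.

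There are, however, two points you must still address. The first is a genuine gap in the ansatz step: the linear system for $(\lambda,\alpha,\beta)$ has determinant $-\tfrac{a}{4}(b^2-4ac)$, so ``matching the three coefficients'' determines nothing when $b^2=4ac$. This degeneracy lies \emph{inside} your Case 1 ($a\neq 0$): for positive definite symmetric $2\times2$ matrices it occurs exactly when $P_2(t_0)$ and $P_2(t_1)$ are proportional, i.e.\ for the conformally flat metrics that are the paper's principal example. You must either check that the singular system remains consistent there (it does: the double root $s_0$ of $Q$ then satisfies $P(s_0)=0$, hence $N(s_0)=0$, which is precisely the compatibility condition) or argue by continuity in $(t_0,t_1)$.

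The second point is that your final step --- ``reproduce the three-term expression in the statement'' --- cannot be carried out literally, because the printed formulas contain errors, and a correct execution of your computation will not match them. Concretely: the coefficient of $T_{;1,1}$ in the printed first term is $\tfrac{1}{2|g(t_0)-g(t_1)|}$ rather than the value $\tfrac{|g|(t_0)|g|(t_1)}{2|g(t_0)-g(t_1)|}$ derived above; under the rescaling $g\mapsto \mu g$, the left-hand side and the second and third printed terms all scale as $\mu^{2}$, while the printed first term scales as $\mu^{-2}$, so the display is internally inhomogeneous. Both degenerate-case formulas also fail direct checks against the simplex integral: at $P_2(t_0)=P_2(t_1)$ one gets $T_{;1,1}=\sqrt{|g|}$ (as Remark \ref{basicpropTfunctions}(iii) forces, since $T_{;2}(t_0)=(\det P_2(t_0))^{-1/2}$), not $\sqrt{|g|}/2$; and for $P_2(t_1)=I$, $P_2(t_0)=\mathrm{diag}(2,1)$, $(k,l)=(2,2)$ the integral equals $(2-\sqrt{2})/3$ while the printed formula gives $(3\sqrt{2}-4)/2$, reflecting that the correct degenerate formula must also carry a $g_{kl}(t_0)$ contribution. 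So plan to present the closed forms your method yields as a corrected statement: they have exactly the claimed shape (algebraic terms plus a multiple of $T_{;1,1}$), but not the printed coefficients.
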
 
\begin{proof}
We substitute the formula of $T$-functions from Lemma \ref{Tfunctionsdimtwo}  in the formula for the total scalar curvature $\varphi(R)$ found in Proposition \ref{totalcurvature}. 
We find that $F_S^{ij}$ is the $(i,j)$ component of the matrix 
\begin{equation*}
\begin{aligned}
&\left(2 |g(t_0)-g(t_1)| |g|^{\frac14}(t_0)|g|^{\frac14}(t_1) (t_0-t_1)^2 \big(|g(t_0)-g(t_1)|-(|g|^{\frac12}(t_0)+|g|^{\frac12}(t_1))^2\big)\right)^{-1}\\
&\Bigg(2 T_{1,1}(t_0,t_1) \left(\big(|g|^{\frac12}(t_0)+|g|^{\frac12}(t_1)\big)^2-|g(t_0)-g(t_1)|\right)+|g(t_0)-g(t_1)| |g|^{\frac14}(t_0)|g|^{\frac14}(t_1)\\
&\,\,\,\,-4 |g|^{\frac12}(t_0) |g|(t_1)-4 |g|^{\frac12}(t_1) |g|(t_0) \Bigg)(\alpha(t_0,t_1)+\alpha(t_1,t_0)),
\end{aligned}
\end{equation*}
where $\alpha(t_0,t_1)$ is a matrix valued function given by
\begin{equation*}
\begin{aligned}
\alpha(t_0,t_1)=&|g|^{\frac12}(t_0)\Big(|g(t_0)-g(t_1)|-|g|(t_0)-|g|(t_1)+|g|(t_0) g^{-1}(t_0)g(t_1)\Big)g^{-1}(t_0)\\
&+|g|^{\frac12}(t_1) |g|(t_0) g^{-1}(t_0).
\end{aligned}
\end{equation*}
Using the Cayley–Hamilton theorem and the identity $\det(A)A^{-1}=\tr(A)-A$ which holds for every two by two matrix $A$, we have 
\begin{equation*}
\begin{aligned}
\Big(|g(t_0)-g(t_1)|-|g|(t_0)-|g|(t_1)+|g|(t_0) g^{-1}(t_0)g(t_1)\Big)g^{-1}(t_0)=&-|g|(t_1) g^{-1}(t_1).
\end{aligned}
\end{equation*}
This implies that 
$$\alpha(t_0,t_1)=|g|^{\frac12}(t_1) |g|(t_0) g^{-1}(t_0)-|g|^{\frac12}(t_0) |g|(t_1) g^{-1}(t_1).$$ 
Hence $\alpha$  is an anti-symmetric function and  the functions $F^{ij}_S$ vanish for all $i$ and $j$, in dimension two. 
This completes the proof of the theorem.
\end{proof}

\addcontentsline{toc}{section}{Summary and outlook}
\section*{Summary and outlook}

In this paper we introduced a new family of metrics on noncommutative tori, called functional metrics, and studied their spectral geometry.
We defined the Laplacian  of  these metrics and computed the heat trace asymptotics of these Laplacians.
A formula for the second density of the heat trace is obtained. In fact our formula covers a class larger than these Laplacians. We call the latter class, introduced in this paper,  Laplace type $h$-differential   operators.
As a result, the scalar curvature density and the total scalar curvature are explicitly computed in all dimensions for certain classes of functional metrics that include conformally flat
 metrics and twisted product of flat metrics. In dimension  two  our computations cover the total scalar curvature of all functional metrics.
Finally a Gauss-Bonnet type theorem for a noncommutative two torus equipped with a general functional metric is proved.
Extending noncommutative curvature computations beyond dimensions 2, 3, and 4, to all dimensions, and beyond conformally flat metrics has been an open problem that is effectively addressed in this paper for the first time. The moduli space of noncommutative metrics, even for  noncommutative two tori, is poorly understood at present. It is thus important to treat  wider classes of metrics by  heat equation and spectral geometry techniques in search of finding common patterns. 

\addcontentsline{toc}{section}{References}
\providecommand{\bysame}{\leavevmode\hbox to3em{\hrulefill}\thinspace}
\providecommand{\MR}{\relax\ifhmode\unskip\space\fi MR }
\providecommand{\MRhref}[2]{%
  \href{http://www.ams.org/mathscinet-getitem?mr=#1}{#2}
}
\providecommand{\href}[2]{#2}

\Addresses
\end{document}